\pgfplotsset{compat=1.8}
\def\revddots{\mathinner{\mkern1mu\raise\p@
\vbox{\kern7\p@\hbox{.}}\mkern2mu
\raise4\p@\hbox{.}\mkern1mu\raise7\p@\hbox{.}\mkern1mu}}
\theoremstyle{plain}% default
\newtheorem{thm}{Theorem}[section]
\newtheorem{lem}[thm]{Lemma}
\newtheorem{prop}[thm]{Proposition}
\newtheorem{cor}[thm]{Corollary}
\theoremstyle{definition}
\newtheorem{defn}[thm]{Definition}
\newtheorem{conj}[thm]{Conjecture}
\theoremstyle{remark}
\newcommand{\R}{\mathbb{R}}
\newcommand{\C}{\mathbb{C}}
\newcommand{\Z}{\mathbb{Z}}
\newcommand{\Hom}{\mathrm{Hom}}
\newcommand{\SL}{\mathrm{SL}}
\newcommand{\GL}{\mathrm{GL}}
\newcommand{\Spin}{\mathrm{Spin}}
\newcommand{\Sp}{\mathrm{Sp}}
\newcommand{\SO}{\mathrm{SO}}
\newcommand{\Irr}{\mathrm{Irr}}
\newcommand{\zrep}[2]{\zeta(#1,#2)}
\DeclareMathOperator{\Jord}{Jord}
\newcommand\blfootnote[1]{%
  \begingroup
  \renewcommand\thefootnote{}\footnote{#1}%
  \addtocounter{footnote}{-1}%
  \endgroup
}
\title{On the Muić conjecture--the irreducibility of the big theta lift} 
\author{M. Hanzer}
\affil{Department of Mathematics, Faculty of Science, University of Zagreb}
\date{}
\begin{document}

\maketitle

\begin{abstract}
Let $F$ be a non-archimedean local field of characteristic zero. We study theta correspondence for (complex) representations of symplectic--even orthogonal dual reductive pairs over $F;$ more specifically, the big theta lifts. We prove that, starting from a discrete series representation $\pi$ of a symplectic (even orthogonal group) over $F,$ its big theta lift $\Theta (\pi)$ (as a representation  of an even orthogonal (symplectic) group) if non-zero, is an irreducible representation, thus proving a conjecture of Muić.  Building upon this result, we completely describe the situations in which the theta lifts of  tempered representations are irreducible and when they are  not.\blfootnote{\hspace{-1.8em}MSC2010: Primary 22E50, Secondary 11F27 \newline Keywords: theta correspondence; big theta lift; discrete series representation, tempered representation \newline This work was supported in part by the Croatian Science Foundation under the project number HRZZ-IP-2022-4615}
\end{abstract}
\section{Introduction}

In this paper, we study  some questions related to the local theta correspondence for dual pairs of type I. Let us briefly recall the basic setting before explaining our results.

Let $F$ be a nonarchimedean local field of of characteristic $0.$ We fix $\epsilon = \pm 1$. Let $W_n$ be an $-\epsilon$-hermitian space of dimension $n$ over $F$; similarly, let $V_m$ be a $\epsilon$-hermitian space of dimension $m$ over $F$.  To the pair of spaces $(W_n, V_m)$ we attach the corresponding isometry groups $G(W_n)$, $H(V_m).$
The space $W_n \otimes V_m$ has a natural symplectic structure. Fixing some additional data , we obtain a splitting $G(W_n) \times H(V_m) \to \text{Mp}(W_n \otimes V_m)$, where $\text{Mp}$ denotes the metaplectic cover of the symplectic group. By means of this splitting  we obtain a Weil representation of $G(W_n) \times H(V_m)$, which we denote $\omega_{m,n}$.

For any irreducible admissible representation $\pi$ of $G(W_n)$ we may now look at the maximal $\pi$-isotypic quotient of $\omega_{m,n}$. This quotient is isomorphic to $\pi \otimes \Theta(\pi)$ for a certain representation $\Theta(\pi)$ of $H(V_m)$. We call $\Theta(\pi)$ the big (or the full) theta lift of $\pi$ to $V_m$. This representation, when non-zero, has a unique irreducible quotient, denoted by $\theta(\pi)$---the small theta lift of $\pi$. This basic fact, called the Howe duality conjecture, was first formulated by Howe \cite{Howe_theta_series}, proven by Waldspurger \cite{Waldspurger_howe_duality} (for odd residue characteristic) and by Gan and Takeda \cite{Gan_Takeda_proof_of_Howe} in general. The Howe duality establishes a map $\pi \mapsto \theta(\pi)$, called the (local) theta correspondence.
The local theta correspondence has a large significance  in the local as well as in the global theory (where it agrees with local), where it presents (one of the few) direct ways of constructing automorphic forms.

\bigskip

Nowadays the local theta correspondence (i.e.~the determination of the small theta lift) is known explicitly, i.e.~if $\pi$ is given through its Langlands parameter, we know when exactly its small theta lift $\theta(\pi)$ is non-zero and we know the Langlands parameter (i.e~the Langlands quotient form of $\theta(\pi)$) cf.~\cite{AG_tempered}, \cite{BH_theta}, but there are number of questions still un-answered e.g.~in what extent the theta correspondence preserves the unitarity (cf.~\cite{Li_unitarity_stable},\cite{Li_low_rank}) or the Arthur parameterizations (cf.~\cite{Adams_A-conjecture}) etc. It turns out that these two questions are related in a very substantial way (\cite{Atobe_Minguez_Unitarity}).

\bigskip

One of the questions which is of a great importance in global applications requires from us to step back, i.e.~to try to derive the information about the whole big theta lift, i.e.~$\Theta(\pi).$  The question whether the big theta lift is irreducible (i.e.~equal to the small theta lift $\theta(\pi)$) is very important as it shows up  not only in the global theta correspondence, but also, more recently, in the context of the relative Langlands program (\cite{Sake_Venk_periods},\cite{Ben_Zvi_Sak_Venk_rel_Langlands}), where certain branching problems  (\cite{GPP_I}) turn out to be dual to the theta correspondence (as explained in the 10th and 12th section of \cite{Gan_notes_IHES}).

As an old (and fundamental) result in this direction, one can recall a well-known fact, proved by Kudla (\cite{Kudla_local_theta})-if $\pi$ is a supercuspidal representation, $\Theta(\pi)$ is irreducible, when non-zero.

\smallskip
 The initial point of our paper is a result of Muić obtained in \cite{Muic_Israel}, cf.~Theorem 6.2 there,  where he proves that in all the instances where the small theta lift $\theta(\pi)$  of  a discrete series $\pi$ is (non zero and) tempered, the big theta lift $\Theta(\pi)$ is irreducible.
We can make this explicit (on both going-down and going up tower-as treated in \cite{AG_tempered}). Muić conjectured in \cite{Muic_conjecture} that this facts holds for the discrete series even more generally, as we recall below.

\smallskip

In the archimedean situation, we have a result by Loke-Ma (\cite{Loke_Ma}):
when in stable range and $\pi$ unitary, $\Theta(\pi)$ is irreducible.
Rather tangential to the classical considerations we employ here, there are a number of instances of the different sorts of the exceptional theta correspondences (for groups over nonarchimedean fields)  where it turns out that the full lifts of certain classes of representations are also irreducible (cf.~\cite{Gan_Savin_except}); moreover in \cite{Hanzer_Savin} it is proved that the lifts of the unitarizable representations of $\SL(2,F)$ to $\SL(6,F),\Spin(12,F), E_7$ (as the dual pairs in  $E_6,E_7,E_8$ respectively) are irreducible.
These correspondences can also be viewed under umbrella of the relative Langlands correspondence (cf.~the second section of \cite{Mao_Zhang_Wan}).

After the submission of this paper, we've learned about Chen and Zou's paper in which they extend Loke-Ma result to the non-archimedean case (and a bit more).
There is a certain overlap with this paper (in the stable range), but not too much: they study a  kind of "generic situation" which completely avoids treating the case  which  most of this paper is devoted to.

\bigskip

In this  paper we prove a conjecture of Muić stated in \cite{Muic_conjecture}.
\begin{conj} Let $F$ be a non-archimedean local field of characteristic zero and $\pi$ an irreducible discrete series representation of $G(W_n).$ Then, its  big  theta lift $\Theta(\pi)$ to $H(V_m)$ is irreducible, if non-zero.
\end{conj}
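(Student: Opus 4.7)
The plan is to combine Howe duality with a Jacquet--module analysis based on Kudla's filtration, while inducting on the Moeglin--Tadic complexity of $\pi$. By Howe duality (Gan--Takeda), $\Theta(\pi)$ already has a unique irreducible quotient $\theta(\pi)$. Applying the MVW involution---and using that discrete series of symplectic and even orthogonal groups are self--contragredient---one obtains that $\Theta(\pi)$ also has a unique irreducible submodule, which is again $\theta(\pi)$. Irreducibility of $\Theta(\pi)$ is therefore equivalent to the statement that $\theta(\pi)$ occurs with multiplicity one in $\Theta(\pi)$ and that no other irreducible subquotient appears.

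The induction is on the Jordan block data parametrizing $\pi$ in the Moeglin--Tadic classification. The base cases are covered by two known results: Kudla's theorem when $\pi$ is supercuspidal, and Muić's theorem (\cite{Muic_Israel}) when $\theta(\pi)$ is tempered, which already handles a large portion of the first non--cuspidal Jordan block configurations on both the going--up and going--down towers (as made explicit in \cite{AG_tempered}). For the inductive step, one extracts a Jordan block from $\pi$ in the Moeglin--Tadic sense to realize $\pi$ as a subrepresentation of a parabolically induced representation $\delta \rtimes \pi'$, where $\pi'$ is a discrete series of a smaller classical group with strictly simpler Jordan data and $\delta$ is a generalized Steinberg representation of a general linear factor. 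Applying the Weil representation to this embedding, and exploiting the Kudla filtration of $R_P(\omega_{m,n})$ along the corresponding maximal parabolic $P = MN$ of $H(V_m)$, one writes $\Theta(\pi)$ as a subquotient of an explicit two--step object whose top is a parabolic induction involving $\Theta(\pi')$---irreducible by the inductive hypothesis---and whose intermediate layer is a controlled Kudla term, itself a theta lift of a shallower Jacquet module of $\pi$.

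The main obstacle is ruling out spurious constituents coming from this intermediate Kudla layer: a priori it can contribute irreducible subquotients of $\Theta(\pi)$ distinct from $\theta(\pi)$. To eliminate them one computes $R_P(\Theta(\pi))$ in two independent ways---once from Kudla's filtration, and once from the Moeglin--Tadic recipe for the Jacquet module of an irreducible representation with the known Langlands parameter of $\theta(\pi)$ supplied by \cite{AG_tempered} and \cite{BH_theta}---and checks that every Casselman exponent on $R_P(\Theta(\pi))$ is compatible with $\theta(\pi)$ alone. The positivity of exponents provided by square--integrability of $\pi$ enters decisively here, together with the second adjointness theorem, to force any hypothetical extra constituent to violate either the Casselman criterion for $\pi$ or the rigidity of the explicit Langlands data of $\theta(\pi)$; multiplicity one for $\theta(\pi)$ in $\Theta(\pi)$ follows from the same bookkeeping applied to the socle filtration.
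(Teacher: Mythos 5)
Your proposal captures the correct high--level framework---induction, Kudla's filtration to relate $\Theta(\pi)$ to lifts from smaller groups, ruling out extra constituents---and the base cases (Kudla for supercuspidals, Mui\'c for tempered lifts) are correctly identified. But the core of the paper, and the part that Mui\'c himself was missing, is the proof that $\Theta_{-l}(\pi)$ has no square--integrable (and more generally no tempered) subquotient when $n<m$, and your proposed method for this step does not work. The ``Casselman exponent bookkeeping'' you describe cannot distinguish a hypothetical discrete series subquotient $\xi_0 \le \Theta_{-l}(\pi)$ from $\theta_{-l}(\pi)$: every subquotient of $\Theta_{-l}(\pi)$ has the \emph{same cuspidal support} as $\theta_{-l}(\pi)$ (Lemma 4.2 of \cite{Muic_Israel}), so the multiset of cuspidal exponents is already forced to agree; what differs is finer data (which segments are glued, and the $\epsilon$--character on the component group), and ``positivity of exponents from square--integrability of $\pi$'' is information about $\pi$, not about the exponents of $\Theta(\pi)$, which is generically non--tempered. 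Moreover, your ``two independent computations of $R_P(\Theta(\pi))$'' is circular as stated: the M\oe glin--Tadi\'c recipe applied to the known Langlands parameter only yields $R_P(\theta(\pi))$, not $R_P(\Theta(\pi))$; comparing that with the Kudla--filtration output simply recovers the inclusion $R_P(\theta(\pi)) \subset R_P(\Theta(\pi))$, which gives nothing new.

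The paper's actual mechanism for excluding tempered subquotients is different and essential: one invokes the Adams conjecture (Proposition \ref{Adams_for_square}) to place $\theta_{-l}(\pi)$ in an explicit Arthur packet, then uses extended cuspidal support (Proposition \ref{extended_Adams}) to pin down the $L$--parameter $\phi_{\xi_0} = \psi_d$ of any would--be discrete series subquotient, and finally runs a delicate case analysis in the M\oe glin--Tadi\'c classification using first derivatives $\mathrm{Jac}_{\chi_W\nu^{(\beta-1)/2}}$, Tadi\'c's formula \eqref{eq_tadic_classical}, and the surjection of Lemma \ref{AG-epimorphism}, case--splitting on whether $\rho=\mathbf{1}_{\GL_1}$, on $\alpha$ vs.\ $l$, and on the alternation pattern of $\epsilon_\pi$ on $\Jord_{\chi_V}(\pi)$. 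None of this appears in your proposal; you would need to import the Arthur--packet/extended--cuspidal--support machinery (or a substitute for it) to actually rule out the spurious square--integrable constituents. Your MVW--involution observation that $\Theta(\pi)$ has isomorphic socle and cosocle is true and occasionally helpful, but it is not sufficient: a length--three module with $\theta(\pi)$ at both ends and an extra factor in the middle would be consistent with it, so the intermediate Jordan--H\"older factors must be excluded by other means.
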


\bigskip

The proof of the above conjecture is inductive over the rank of $G(W_n)$ and relays, essentially, on a very simple study of the derivatives of both the initial representation and its theta lift.
This simple idea covers most of the cases, but we're then left with the several cases for which we must use subtler arguments.

These arguments can be better understood if $\theta(\pi)$ is perceived as a representation of Arthur type (cf.~\cite{Arthur_endoscopic}) through the Adams conjecture (\cite{Adams_A-conjecture}, \cite{Moeglin_A_conjecture}) and then we use  some facts about the derivatives of the representations of Arthur type (cf.~\cite{Xu_M_parameterization}). This is the reason we stated our results only for even orthogonal/symplectic dual pair-we avoided the metaplectic/odd orthogonal dual reductive pair because, for the metaplectic groups, the results about  Arthur parameterization are still not complete (as we understand).

 We believe, though, that  our result is also true for that reductive dual pair-but we would have to adapt our arguments to completely avoid the use of Arthur parameters (which, we believe,  can be done, but it would make the proofs even more technical). 
\smallskip

Although Muić obtained some very important results about (i) the irreducibility of the lift when $n\ge m$ (ii) the  structure of the possible non-tempered subquotients in $\Theta(\pi)$ when $n<m$ (Theorem 4.1 of \cite{Muic_Israel}),
he was missing an essential part of (our) proof--namely, the proof that $\Theta(\pi)$ does not contain a square-integrable subquotients when $n<m.$

In the last part of the paper, building upon the results and techniques for the discrete series case, we examine the big theta lifts of an irreducible tempered representation, and determine precisely when they are irreducible and when they are not.

\bigskip

{\textbf{Acknowledgment}}\\ 

We want to thank Shaun Stevens and Justin Trias for a wonderful organization of  LMS Summer School on the theta correspondence, University of East Anglia, Norwich (UK) in July 2025. This paper growth out of the meetings at this summer school and realizing that an (relatively) older conjecture of Muić  fits nicely in the more complete picture of theta correspondence which we now  have, on  the one hand, and, on the other, a new point of view about it is given through connections with the determination of the unitary dual of classical groups  (\cite{Atobe_Minguez_Unitarity}) and the relative Langlands program. We thank Dipendra Prasad for drawing our attention to the paper \cite{Chen_Zou_small_big}.

\smallskip

We also want to thank Goran Muić and Marko Tadić for introducing us to the field and their support through the years.

\section{Preliminaries}
We collect several results we need for the proof.
\smallskip

We keep the notation from Introduction and use the notation from \cite{AG_tempered}: thus, $F$ is a non-archimedean local field of characteristic zero, $G(W_n) \times H(V_m)$ is a dual reductive pair consisting of a symplectic/even orthogonal group (or vice versa; we treat the two groups symmetrically). Let $\epsilon\in\{1,-1\}$ be such that $W_n$ is $-\epsilon$--hermitian space and $V_m$ $\epsilon$--hermitian space. We fix once and for all the splitting data  $G(W_n) \times H(V_m) \to \text{Mp}(W_n \otimes V_m)$
(this includes fixing a non-trivial additive character $\psi$ which determines the Weil representation of $\text{Mp}(W_n \otimes V_m)$), so that we have the Weil representation $\omega_{m,n}$ of $G(W_n) \times H(V_m)$ fixed (cf.~2.8 in \cite{AG_tempered}). Assume for a moment that $\epsilon=1,$ i.e.~$W_n$ is a symplectic space. When we fix a quadratic character $\chi_V$ (related to the discriminant of a quadratic space), there are two quadratic towers with that discriminant which are distinguished by, roughly, their Hasse-invariant, so we can denote these two orthogonal towers
by $\mathcal{V}^{+}_{\chi_V}$ and $\mathcal{V}^{-}_{\chi_V}$ (cf.~\cite{Kudla_notes} or \cite{AG_tempered}, the second section).  Thus, our $V_m$ belongs to one of those quadratic spaces. From now on, we use $G(W_n)$ and $G_n$ interchangeably; the same goes for $H(V_m)$ and $H_m.$ We denote by $\Irr(G_n)$ ($\Irr(H_m)$) the set of the equivalence classes of admissible, irreducible (complex) representations of $G_n$ ($H_m$, respectively).

\smallskip

There are some more notation/results we use:
\begin{enumerate}
\item For $\pi\in \Irr(G_n)$ and fixed $\chi_V$ and a tower, it is customary to examine the lift $\Theta(\pi)$ to different $H(V_m)'s,$ so we denote the big theta lift by $\Theta(\pi,H_m)$ to emphasize the group to which we lift. More often, when the tower to which we lift is fixed, we define $l=:n-m+\epsilon$ (recall that $m,n$ are the dimensions and not the Witt indexes) and denote
$\Theta(\pi,H_m)$ by $\Theta_l(\pi).$ So, the lifts to large groups $H_m$ correspond to negative $l$'s. We have an analogous notation for the small theta lift $\theta_{l}(\pi).$ 
\item For fixed $\chi_V,$ when we examine the lifts of a fixed $\pi$ to  $\mathcal{V}^{+}_{\chi_V}$ and $\mathcal{V}^{-}_{\chi_V}$,  $\pi$ appears earlier in theta correspondence with the groups in one tower than in the other; the tower in which $\pi$ appears earlier is called ``going-down'' tower and in which appears later is called
``going-up'' tower. If there is danger of confusion, we denote $\Theta_{l}(\pi)=\Theta_l^{down}(\pi)$ or $\Theta_l^{up}(\pi).$
\item  The corresponding indexes for the first appearances satisfy $l^{down}(\pi)+l^{up}(\pi)=-2$ (the \textit{conservation relation} in our context, cf.~the second section of \cite{AG_tempered}; recall that in our situation, $l$ defined above is an odd integer). This means that there is a possibility that $l^{down}(\pi)=l^{up}(\pi)=-1,$ so there is no really difference between  ``going-down'' and ``going-up'' tower for $\pi,$ i.e., the Langlands quotient expression for $\theta_{-l}(\pi)$ looks analogous in both towers (cf.~(2) and (4) of Theorem 4.3 of \cite{AG_tempered}). We sometimes use the notation $l(\pi)$ for $l^{down}(\pi);$ thus $l(\pi)\ge -1.$ 

\item We recall that when $\pi$ appears in theta correspondence with
a member of a fixed tower, then it appears with all the larger groups in that tower
(``the tower property''), and it always appears if we go far up in the tower (what is far enough?  e.g. when the Witt index of $V_m$ is greater or equal to the dimension of $W_n$--{\textit{the stable range}} situation).
\item If we fix $\pi \in \Irr(G_n),$ where $G_n$ is an even orthogonal group, then there is only one tower of symplectic groups, but the situation is fairly symmetric, because instead of lifting $\pi$ on two different towers, we simultaneously lift $\pi$ and $\pi\otimes \det;$ then, analogous results hold: the tower property, stable range,
the conservation relation with $l(\pi)+l(\pi\otimes det)=-2.$
 \item Recall (\cite{Arthur_endoscopic}) that a discrete series representation $\pi\in \Irr (G_n)$ can be described using its L-parameter $(\phi,\epsilon).$
 Here 
 \[\phi=\oplus_{i=1}^r\phi_r,\]
 is a direct sum of distinct irreducible orthogonal representations of the Weil-Deligne group $WD_F=W_F\times \SL(2,\C),$ so $\phi_i=\rho_i \otimes S_{a_i},$
 where $S_{a_i}$ is the unique algebraic representation of $ \SL(2,\C)$ of dimension $a_i,$ and $\rho_i$ can be identified, through LLC for $\GL(n,F)$, with an irreducible self-dual supercuspidal representation of $\GL(n_i,F)$ for some $n_i.$ The character $\epsilon$ is a character of the component group $A_{\phi}= \oplus_{i=1}^r\Z/2\Z a_i,$
 which is a $\Z/2\Z$--vector space with the canonical basis $\{a_i\}$ indexed by the summands $\phi_i$ of $\phi.$ To emphasize the correspondence, we write $(\phi,\epsilon)=(\phi_{\pi},\epsilon_{\pi});$ on the other hand, if $(\phi,\epsilon)$ are given (satisfying some extra conditions, cf.~8.~below), the corresponding discrete series $\pi$ is denoted $\pi=\pi(\phi,\epsilon).$ Actually, to each  representation $\pi\in \Irr(G_n)$ one can attach $(\phi_{\pi},\epsilon_{\pi});$ if $\pi$ is  non-tempered, $\rho_i$ are not necessarily unitarizable (this is LLC for classical groups as recalled in Appendix B of \cite{AG_tempered}). Sometimes, as $\epsilon_{\pi}$ attains values in the set $\{1,-1\}$,  we call it "a sign." \\ 

 The relation between the L-parameters of $\pi\in \Irr(G_n)$ (a discrete series representation) and $\theta_{l}(\pi)$ (not necessarily tempered representation) is given in the fifth section of \cite{AG_tempered}. We recall here that, if $H_m$ belongs to a tower $\mathcal{V}^{\pm}_{\chi_V}$ (the target), the only parts of the L-parameters of $\pi$ and $\theta_{-l}(\pi)$ which are mutually different, up to a twist by a character, are the ones which correspond to $\rho=\chi_V.$ The symplectic tower corresponds to the trivial character.
 \item If $G_n=\Sp(W_n),$ the representation $\phi$ must factor through $\SO(n+1,\C)$
 and if $G_n=O(V_n),$ the representation $\phi$ must factor through $O(n,\C)$; these are the duals groups $\widehat{G_n}$ in those instances.

 \item We define $\Phi(G_n)$ as the set of the representations $\phi$ above (parameterizing $\Irr (G_n)$), understood as the set of equivalence classes of isomorphisms to $\SO(n+1,\C)$ or  $O(n,\C)$.  The character $\epsilon$ has to satisfy certain conditions which we do not recall here, but are explained in Desideratum 3.1 of \cite{AG_tempered}.
 Let $\Phi_{temp}(G_n)$ denote a subset of {\textit{tempered}} parameters of $\Phi(G_n)$ which corresponds to the parameters for which each $\rho_i$ is unitarizable (supercuspidal) representation.
 Specifically, all the parameters $\phi_{\pi},$ for $\pi$ a
  discrete series representation, belong to  $\Phi_{temp}(G_n)$.

  \item Recall that, in our setting, A-parameter for $G_n$ is the $\widehat{G_n}$--conjugacy class of an admissible homomorphisms
 \[\psi:W_F\times \SL(2,\C)\times \SL(2,\C)\to \widehat{G_n}\]
 such that the image of $W_F$ is bounded. We can write
 \[\psi=\oplus_{\rho}\left(\oplus_{i\in I_{\rho}}\rho\otimes S_{a_i}\otimes S_{b_i}\right)\]
 where $\rho$ runs of the set of equivalence classes of irreducible unitarizable supercuspidal representations $\cup_{d\ge 1}Cusp_{unit}(\GL(d,F)).$
 All of the Arthur parameters we encounter in this paper will be of ``good parity'', meaning that each summand $\rho\otimes S_{a_i}\otimes S_{b_i}$ is of the same type as $\widehat{G_n};$ in our case, this means orthogonal.

\item The notion of the Jordan block of a discrete series representation was defined by M\oe glin, and was used to classify discrete series representation. We don't need the exact definition (one can check in \cite{MT}), but it turned out by the work of Arthur (\cite{Arthur_endoscopic}),  that it agrees with the L-parameter of $\pi$ (cf.~sections 12 and 13 of \cite{MT}). Thus, we can  define 
\[\Jord (\pi)=\{(\rho,a): \rho \otimes S_a\hookrightarrow \phi_{\pi}\}\]
so that $\rho$ is an irreducible  selfcontragredient cuspidal representation of $GL(m_{\rho},F)$--this defines $m_{\rho}$-- and $a$ is a positive integer such that $\rho \otimes S_a$ is of orthogonal type (for our dual reductive pairs).\\
For  such $\rho,$ let 
\[\Jord_{\rho}(\pi)=\{a: \rho \otimes S_a\hookrightarrow \phi_{\pi}\}.\]
We introduced the Jordan block  as above just as a matter of a notational convenience: namely to avoid repeating on numerous occasions that, say, summand $\rho\otimes S_a$ "appears as an irreducible summand in $\phi_{\pi}$", we just say $a\in \Jord_{\rho}(\pi).$
\item We denote the set of A-parameters  of $G_n$ by $\Psi(G_n),$ and the subset of the those of good parity by $\Psi_{gp}(G_n).$ By Arthur, to each $\psi$ one can attach a multiset (maybe empty) of $\Irr (G_n)$ (\cite{Arthur_endoscopic}) denoted $\Pi_{\psi}$--so called A-packet attached to $\psi.$ These representation are explicitly constructed by M\oe glin (\cite{Moeglin_construction}- she proved that they actually form a set; also cf.~\cite{Xu_M_parameterization}) and the parameterization is simplified by Atobe (\cite{Atobe_construction_A}).

\item To obtain a complete list of irreducible representations of $G(W_n)$, we use the Langlands classification. Let $\delta_i \in \Irr (GL(n_i,F)), i = 1,\dotsc, r$ be irreducible discrete series representations, and let $\tau$ be an irreducible tempered representation of $G(W_{n-2t})$, where $t=t_1+\dotsb+t_r$. Any representation of the form
\[
\nu^{s_r}\delta_r \times \dotsb \times \nu^{s_1}\delta_1 \rtimes \tau,
\]
where $s_r \geqslant \dotsb \geqslant s_1 > 0$ (and where $\nu$ denotes the character $\lvert\det\rvert$ of the corresponding general linear group) is called a standard representation (or a standard module)--here we use the Zelevinsky notation for the parabolic induction (cf.~the second section of \cite{AG_tempered}). It possesses a unique irreducible quotient, the Langlands quotient, denoted by $L(\nu^{s_r}\delta_r, \dotsc, \allowbreak \nu^{s_1}\delta_1; \allowbreak \tau)$. Conversely, every irreducible representation can be represented as the Langlands quotient of a unique standard representation.
\item Let $\rho \in \Irr (\GL(n,F))$ be a supercuspidal representation and $a,b\in \R$ such that $b-a\in\Z_{\ge 0}.$ Then $\{\rho\nu^{a},\rho\nu^{a+1},\ldots, \rho\nu^{b}\}$ is called a segment of supercuspidal representations. To it we attach a parabolically induced representation
\[\rho\nu^{b}\times \rho\nu^{b-1}\times \cdots \times \rho\nu^{a};\]
this representation has a unique irreducible subrepresentation denoted by $\delta[\rho\nu^{a},\rho\nu^{b}]$ and unique irreducible quotient which is denoted by $\zeta(\rho\nu^{a},\rho\nu^{b}).$ The representation $\delta[\rho\nu^{a},\rho\nu^{b}]$ is essentially square-integrable and each essentially square-integrable representation of a general linear group is obtained in this way.
\item To use Jacquet modules of representations which are parabolically induced, we use Tadić formula. To explain it, let us denote by $R_k(\pi)$ the normalized Jacquet module of a  smooth representation $\pi$ of $G_n$ with respect to the standard maximal parabolic subgroup of  $G_n$ with the Levi subgroup isomorphic to $\GL(k,F)\times G_{n-2k}.$ Then, we define (as a semisimplification)
\[\mu^*(\pi)=\sum_{k=0}^{n'}R_{P_k}(\pi),\]
where $n'$ is the split rank of $G_n.$

 The relevant formula is now
\begin{equation}
\label{eq_tadic_classical}
\mu^*(\delta \rtimes \pi) = M^*(\delta) \rtimes \mu^*(\pi).
\end{equation}
The definition of $M^*$ can be found in \cite[Theorem 5.4]{Tad_struc}, but we shall need it here only in the special case when $\delta = \delta[\nu^a\rho, \nu^b\rho]$ or $\zeta(\nu^a\rho, \nu^b\rho)$; in these cases, we have (\cite[\S 14]{tadic2012reducibility})
\begin{equation}
\label{eq_tadic_classical2}
\begin{aligned}
M^*(\delta[\rho\nu^a, \rho\nu^b]) &= \sum_{i=a-1}^b\sum_{j=i}^b \delta[{}^c\rho^\vee\nu^{-i},{}^c\rho^\vee\nu^{-a}] \times \delta[\rho\nu^{j+1},\rho\nu^b] \otimes \delta[\rho\nu^{i+1}, \rho\nu^j]\\
M^*(\zeta(\rho\nu^a, \rho\nu^b)) &=  \sum_{i=a-1}^b\sum_{j=i}^b \zrep{{}^c\rho^\vee\nu^{-b}}{{}^c\rho^\vee\nu^{-(j+1)}} \times \zrep{\rho\nu^{a}}{\rho\nu^{i}} \otimes \zrep{\rho\nu^{i+1}}{\rho\nu^j}.
\end{aligned}
\end{equation}

\item Let $\rho$ be a unitarizable supercuspidal representation of $\GL(k,F)$ and $\alpha \in \R.$ For an admissible representation $\pi$ of $G_n$ we denote (in the appropriate Grothendieck group)
\[R_{P_k}(\pi)=\rho\nu^{\alpha}\otimes \xi +\sum \tau\otimes \sigma,\]
where $\tau\neq \rho\nu^{\alpha}.$ Then, we denote $Jac_{\rho\nu^{\alpha}}(\pi)=\xi.$
This is {\textit{the first derivative of}} $\pi$ with respect to  $\rho\nu^{\alpha},$
for higher derivatives and relevant properties, one can check \cite{Atobe_Min_duality}.
\item  Let $\pi$ be an admissible representation of $G_n.$ Then, we denote the MVW-involution of $\pi$ by $\pi^{\eta}$ (cf.~\cite{MVW_theta}). 

\end{enumerate}

\smallskip
Several times we will need the notion of a lift in a more general context. Namely, if $M_k\cong \GL(k,F)\times G_{n_0}$ is a Levi subgroup attached to a
 maximal parabolic subgroup $P_k$ of $G_n$, then  the corresponding Jacquet module $R_{P_k}(\omega_{m,n})$ is a $\GL(k,F)\times G_{n_0}\times H_m$-module. Let  $\delta\otimes \sigma$ be an irreducible representation of $M_k.$ We can consider the maximal quotient of  $R_{P_k}(\omega_{m,n})$ on which $\GL(k,F)\times G_{n_0}$ acts as a multiple of $\delta\otimes \sigma.$ This is an $H_m$--module which we denote by
 \begin{equation}
 \label{theta_Jacquet}
 \Theta(\delta\otimes \sigma,R_{P_k}(\omega_{m,n})).
 \end{equation}

\smallskip

\begin{prop}
\label{Adams_for_square}
For each $l\ge 1,$ $\theta_{-l}^{down}(\pi)$ is an unitarizable representation of Arthur type. For each $l\ge l(\pi)+2,$ $\theta_{-l}^{up}(\pi)$ is  an unitarizable representation of Arthur type. Moreover, if  $\phi_{\pi}$ is the Langlands parameter of $\pi,$ then $\theta_{-l}(\pi)$ (on the going-down or going up tower depending on $l$ as just explained) belongs to the A-packet corresponding to the A-parameter given by
\[\psi_{\ldots}\chi_W\chi_V^{-1}\phi_{\pi}\oplus \chi_W\otimes S_1\otimes S_l.\]
Here, we understand each summand $\rho\otimes S_a$ in $\phi_{\pi}$ as of form
$\rho\otimes S_a\otimes S_1.$
\end{prop}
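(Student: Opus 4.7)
The plan is to deduce the proposition from M\oe glin's proof of the Adams conjecture \cite{Moeglin_A_conjecture}, together with the explicit Langlands-quotient description of $\theta_{-l}(\pi)$ recorded in Section~5 of \cite{AG_tempered}. The Adams recipe sends an A-parameter $\psi$ of $G_n$ to $\chi_W \chi_V^{-1} \psi \oplus \chi_W \otimes S_1 \otimes S_l$ on the target tower; since the A-parameter of a discrete series $\pi$ is precisely its L-parameter $\phi_{\pi}$ placed trivially in the Arthur $\SL(2,\C)$ factor, this recipe produces exactly the A-parameter stated in the proposition.

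Concretely, the argument would proceed in two pieces. For the going-down tower with $l\ge 1$, invoke M\oe glin's theorem to conclude that every irreducible constituent of $\Theta_{-l}^{down}(\pi)$, and hence $\theta_{-l}^{down}(\pi)$, lies in the A-packet $\Pi_{\psi_l}$ attached to $\psi_l=\chi_W\chi_V^{-1}\phi_{\pi}\oplus \chi_W\otimes S_1\otimes S_l$. For the going-up tower with $l\ge l(\pi)+2$, the same argument applies once one uses the conservation relation $l^{down}(\pi)+l^{up}(\pi)=-2$ (item~(3) of the preliminaries) to confirm non-vanishing of the lift and to align the indexing past first occurrence on the two towers. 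Unitarity of $\theta_{-l}(\pi)$ is then automatic: members of A-packets attached to good-parity parameters are unitarizable, a property inherent to the explicit construction of A-packets by M\oe glin and its reformulation by Atobe (\cite{Moeglin_construction, Atobe_construction_A, Xu_M_parameterization}).

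As a consistency check, one can compare the Langlands quotient form of $\theta_{-l}(\pi)$ from \cite[Section~5]{AG_tempered}---where only the $\chi_V$-isotypic component of $\phi_{\pi}$ changes under lifting---with the Langlands data for the relevant member of $\Pi_{\psi_l}$ produced by M\oe glin's construction. The main obstacle here is essentially bookkeeping: aligning the quadratic twists $\chi_W,\chi_V$ and the $\SL(2,\C)$-factor conventions between the formulations of M\oe glin and of Atobe--Gan, and verifying that the sign and parity of $l$ in each range ensure that the output of the Adams recipe indeed lands in $\Psi_{gp}(H_m)$. Once these identifications are made, the proposition reduces to a direct application of M\oe glin's theorem in the two indicated ranges, with the unitarity statement packaged into the construction of the relevant A-packets.
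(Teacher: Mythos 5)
Your approach is essentially the same as the paper's: the proposition is deduced by citing M\oe glin's proof of the Adams conjecture \cite{Moeglin_A_conjecture} (together with \cite{Adams_A-conjecture}), which directly gives that $\theta_{-l}(\pi)$ lies in the stated A-packet, with unitarity built into the construction of A-packets of good parity. The paper's proof is simply this citation; your proposal spells out the bookkeeping (conservation relation, tower alignment, consistency with the Langlands data from \cite{AG_tempered}) that the paper leaves implicit, but introduces no new idea or different route.
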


\begin{proof}This is an instance of Adams conjecture (cf.~\cite{Adams_A-conjecture}), and the claim of this Proposition was already known to M\oe glin-\cite{Moeglin_A_conjecture} and this generality is quite sufficient for our purpose. For a more general instance of Adams conjecture, one may check \cite{BH_Adams}
\end{proof}

 We note  Proposition \ref{Adams_for_square} since we use bellow Proposition 8.3 of \cite{Xu_M_parameterization} which is concerned with certain Jacquet modules of a representation in a given Arthur packet.

 \smallskip

 For our proof, we need the notion of an extended cuspidal support of a representation. We recall the notion of the extended cuspidal support as defined in \cite{Atobe_local_A_packets_fixed_rep}. 

\begin{defn}
Let $\pi \in Irr(G_n).$ Let $\rho_i \in Cusp(GL(d_i,F))\;1\le i\le r$ and $\sigma \in Irr(G_{n_0})$ such that
\[\pi\hookrightarrow \rho_1\times \rho_2\times \cdots \times \rho_k\rtimes \sigma.\] 
Let $\sigma=\pi(\phi_{\sigma},\epsilon_{\sigma})$  with
\[\phi_{\sigma}=\sum_{j=1}^t\rho_j'\otimes S_{a_j}\in \Phi_{temp}(G_{n_0})\]
We define an extended cuspidal support $ex.supp(\pi)$ as the multi-set
\[ex.supp(\pi) = \{\rho_1,\ldots,\rho_r,\widetilde{\rho_1},\ldots,\widetilde{\rho_r}\}\cup \bigsqcup_{i=1}^t\{\rho_j'\nu^{\frac{a_j-1}{2}},\rho_j'\nu^{\frac{a_j-3}{2}},\ldots, \rho_j'\nu^{-\frac{a_j-1}{2}}\}.\]
 \end{defn}

\smallskip
Let $\psi\in \Psi(G_n).$ Let $\Delta:W_F\times SL(2,\C)\to W_F\times SL(2,\C)\times SL(2,\C)$ be the diagonal embedding:
\[\Delta(w,x)=(w,x,x)\] and we define {\textit{the diagonal restriction of $\psi$}} in the notation $\psi_d,$ as $\psi_d:=\psi\circ \Delta.$

 The following Proposition is needed; this is Proposition 2.3. of \cite{Atobe_local_A_packets_fixed_rep}, and originally comes from Proposition 4.1 of 
 \cite{Moeglin_L_in_A}. 
 \begin{prop}
\label{extended_Adams}
 Let $\psi\in \Psi(G_n).$ Write $\psi_d=\sum_{i=1}^{r}\rho_i\otimes S_{a_i}.$ Then, for any $\pi\in \Pi_{\psi},$ the extended cuspidal support of $\pi$ is given by
 \[\bigsqcup_{i=1}^r\{\rho_i\nu^{\frac{a_i-1}{2}},\rho_i\nu^{\frac{a_i-3,}{2}},\ldots, \rho_i\nu^{-\frac{a_i-1}{2}}\}.\]
 \end{prop}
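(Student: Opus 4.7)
The plan is to induct on the ``size'' of $\psi$ (the sum $\sum_i \dim(\rho_i)\,a_ib_i$ over its summands), reducing to the case where $\psi$ is tempered.  On the A-packet side we use Moeglin's construction of $\Pi_\psi$ via extended multisegments, reformulated by Atobe (cf.~\cite{Atobe_construction_A}); on the parameter side we use the Clebsch--Gordan decomposition of $S_a\otimes S_b$ restricted along the diagonal $\SL(2,\C)\hookrightarrow \SL(2,\C)\times \SL(2,\C)$, namely $(S_a\otimes S_b)\circ \Delta=\bigoplus_{c} S_c$ with $c$ ranging over $\{|a-b|+1,|a-b|+3,\dots,a+b-1\}$.

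For the base case, assume $\psi$ is tempered, so that every $b_i=1$.  Then $\psi_d=\psi$ and $\Pi_\psi$ coincides with the L-packet of $\phi=\psi$.  Every $\pi\in\Pi_\psi$ is tempered, so in the definition of $ex.supp(\pi)$ one may take $r=0$ and $\sigma=\pi$ with $\phi_\sigma=\psi=\bigoplus_i \rho_i\otimes S_{a_i}$; the definition then produces exactly $\bigsqcup_i\{\rho_i\nu^{(a_i-1)/2},\dots,\rho_i\nu^{-(a_i-1)/2}\}$, as required.

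For the inductive step, let $\psi$ have a summand with $b\ge 2$.  Moeglin's construction realizes each $\pi\in\Pi_\psi$ as a subrepresentation
\[\pi\hookrightarrow \delta[\rho\nu^{x},\rho\nu^{y}]\rtimes\pi',\qquad \pi'\in\Pi_{\psi'},\]
for some $\psi'$ obtained from $\psi$ by an explicit combinatorial reduction (roughly: decreasing the chosen $b$ by $2$ and attaching auxiliary data), with $[x,y]$ prescribed by the extended multisegment indexing $\pi$.  Extended cuspidal support is additive under parabolic induction by a $\GL$-factor,
\[ex.supp(\pi) = \{\rho\nu^{x},\dots,\rho\nu^{y}\}\cup\{\widetilde\rho\nu^{-y},\dots,\widetilde\rho\nu^{-x}\}\cup ex.supp(\pi'),\]
as one sees by composing the embedding with $\delta[\rho\nu^x,\rho\nu^y]\hookrightarrow \rho\nu^y\times\cdots\times\rho\nu^x$ and using that $\widetilde{\delta[\rho\nu^x,\rho\nu^y]}=\delta[\widetilde\rho\nu^{-y},\widetilde\rho\nu^{-x}]$.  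By Clebsch--Gordan, the multi-set attached to $\psi_d$ differs from that attached to $\psi'_d$ by exactly the same extra contribution.  Combined with the inductive hypothesis applied to $\pi'$, this closes the step.

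The main obstacle is the combinatorial compatibility between Moeglin's reduction $\psi\rightsquigarrow\psi'$ (dictated by the extended multisegment attached to $\pi$) and the Clebsch--Gordan decomposition: one needs the segment $[\rho\nu^{x},\rho\nu^{y}]$ pulled off in the parabolic embedding to match precisely the change in $\psi_d$, and one must also verify that $\pi'$ is indeed a well-defined, nonzero element of the smaller A-packet $\Pi_{\psi'}$.  This matching is the combinatorial core of \cite[Proposition 4.1]{Moeglin_L_in_A}; once it is in place, the additivity of $ex.supp$, the Clebsch--Gordan formula, and the recursive structure of A-packets combine to finish the proof.
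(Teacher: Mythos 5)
The paper does not actually argue this proposition---it is established by citation alone, pointing to Proposition 2.3 of \cite{Atobe_local_A_packets_fixed_rep}, which the text notes goes back to Proposition 4.1 of \cite{Moeglin_L_in_A}. Your proposal instead reconstructs the skeleton of the underlying proof, and the picture you draw is correct: the tempered base case (all $b_i=1$, so $\psi_d=\psi$ and every member of $\Pi_\psi$ is tempered) is immediate from the definition; the inductive step rests on the recursive realization $\pi\hookrightarrow\delta[\rho\nu^x,\rho\nu^y]\rtimes\pi'$ with $\pi'\in\Pi_{\psi'}$ furnished by M\oe glin's construction (in Atobe's reformulation); and the Clebsch--Gordan decomposition correctly governs how the multi-set attached to $\psi_d$ shrinks to the one attached to $\psi'_d$, the change being precisely the two outer ``columns'' of the $a\times b$ grid when $b$ drops by $2$. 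The additivity of extended cuspidal support under such an embedding, including the contragredient contribution, is also stated correctly.

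That said, what you submit is a sketch rather than an independent proof: the matching step you single out as the ``main obstacle''---that the segment $[\rho\nu^x,\rho\nu^y]$ dictated by the extended multisegment lines up exactly with the Clebsch--Gordan defect, and that $\pi'$ is indeed a nonzero element of $\Pi_{\psi'}$---is precisely what you then attribute to M\oe glin's Proposition 4.1, which \emph{is} the proposition under proof. As written, your induction invokes the target result to close its own inductive step. In effect you have supplied an exposition of the mechanism behind the citation, which is useful context but does not establish anything the paper's citation does not already supply; to make the argument self-contained one would have to actually carry out M\oe glin's combinatorial verification (the choice of segment, its length and center, and the comparison with $\psi'_d$) rather than point to it.
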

Thus, the representations in the same A-packet have the same extended cuspidal support, and, trivially, the representations with the same cuspidal support have the same extended cuspidal support.

The following Lemma is a direct consequence of Kudla's filtration (\cite{Kudla_local_theta}); also check the treatment in the fifth section of  \cite{AG_tempered}. We use it in the formulation of Corollary 4.6 of \cite{BH_theta}, also cf.~Corollary 4.7. there.
\begin{lem}
\label{AG-epimorphism}

Let $\pi\in \Irr(G_n), \pi_0 \in \Irr(G_{n-2k})$ and let $\delta$ be an irreducible essentially square integrable representation of $\GL_k(E)$. Assume that $\delta \ncong \textnormal{St}_k\nu^{\frac{l-k}{2}}$, where $l = n - m + \epsilon$. Then
\[
\chi_V\delta \rtimes \pi_0 \twoheadrightarrow \pi
\]
implies
\[
\chi_W\delta \rtimes \Theta_l(\pi_0) \twoheadrightarrow \Theta_l(\pi).
\]
If $\delta =\textnormal{St}_k\nu^{\frac{l-k}{2}}=\delta[\nu^{\frac{l+1}{2}-k},\nu^{\frac{l-1}{2}}]$ we have a bit weaker statement:  either
\[
\chi_W\delta \rtimes \Theta_l(\pi_0) \twoheadrightarrow \theta_l(\pi)
\]
or
\[
\chi_W\delta[\nu^{\frac{l+1}{2}-k},\nu^{\frac{l-3}{2}}] \rtimes \Theta_{l-2}(\pi_0) \twoheadrightarrow \theta_l(\pi)
\]
 holds.
 \end{lem}

\section{The main result}
Let $\pi \in \Irr(G_n)$ be a discrete series representation.
\begin{thm}
\label{going_down_tower}
The representation $\Theta_{-l}^{down}(\pi)$ is irreducible for $l\ge 3$ and 
$\Theta_{-l}^{up}(\pi)$ is irreducible for $l\ge l(\pi)+4.$

\end{thm}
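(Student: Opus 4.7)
My plan is induction on the rank of $G_n$. The base case is $\pi$ supercuspidal, in which case $\Theta(\pi)$ is irreducible whenever nonzero by Kudla's classical theorem.

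For the inductive step, I assume $\pi$ is a non-supercuspidal discrete series. By the M\oe glin--Tadi\'c classification, $\pi$ embeds into a parabolic induction of the form $\chi_V\delta\rtimes\pi_0$, where $\delta$ is an essentially square-integrable representation of some $\mathrm{GL}(k,F)$ extracted as a ``top segment'' from a M\oe glin chain of $\pi$, and $\pi_0$ is an irreducible discrete series of a classical group of strictly smaller rank. After applying the MVW-involution (which preserves the discrete series property), this embedding becomes a surjection, placing us in the setting of Lemma \ref{AG-epimorphism}. That lemma then delivers
\[
\chi_W\delta\rtimes\Theta_{-l}(\pi_0)\twoheadrightarrow\Theta_{-l}(\pi),
\]
conditional on $\delta\not\cong\mathrm{St}_k\nu^{(l-k)/2}$. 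The inductive hypothesis (which I would need to track carefully: typically $l(\pi_0)\le l(\pi)$, so the bound on $l$ survives) then gives $\Theta_{-l}(\pi_0)=\theta_{-l}(\pi_0)$.

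Having reduced to one parabolic induction, I identify $\Theta_{-l}(\pi)$ inside $\chi_W\delta\rtimes\theta_{-l}(\pi_0)$ via the Langlands classification: Adams' conjecture (Proposition \ref{Adams_for_square}) pins down the A-parameter and hence the standard module form of $\theta_{-l}(\pi)$, and a comparison of extended cuspidal supports (Proposition \ref{extended_Adams}) rules out all other candidate irreducible quotients on the nose, yielding $\Theta_{-l}(\pi)=\theta_{-l}(\pi)$.

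The main obstacle I expect is the side condition $\delta\not\cong\mathrm{St}_k\nu^{(l-k)/2}$. The forbidden $\delta$ has top exponent exactly $(l-1)/2$, and the numerical bounds $l\ge 3$ (going-down) and $l\ge l(\pi)+4$ (going-up) are precisely what one needs to ensure that some legitimate M\oe glin extraction from $\pi$ produces a $\delta$ whose top exponent lies strictly below $(l-1)/2$. Outside these ranges---at $l=1$ on the going-down tower, or at $l=l(\pi)+2$ on the going-up tower---every available extraction is forced to produce the forbidden $\delta$, so the simple inductive argument breaks down. Those borderline cases are exactly the ones the introduction earmarks for the subtler A-packet / derivative machinery using Xu's Proposition~8.3 of \cite{Xu_M_parameterization}, and I would treat them separately in a later step rather than inside this theorem.
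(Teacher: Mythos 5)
Your overall frame (induction on rank, the embedding $\pi\hookrightarrow\chi_V\delta\rtimes\pi_0$, Lemma \ref{AG-epimorphism}, and the inductive identification $\Theta_{-l}(\pi_0)=\theta_{-l}(\pi_0)$) matches the paper's starting point, but the step where you claim that ``a comparison of extended cuspidal supports rules out all other candidate irreducible quotients on the nose'' is a genuine gap, and it is exactly where the real work lies. First, irreducibility of $\Theta_{-l}(\pi)$ is not a statement about irreducible \emph{quotients}: $\Theta_{-l}(\pi)$ always has $\theta_{-l}(\pi)$ as its unique irreducible quotient by Howe duality, and what must be excluded are additional irreducible \emph{subquotients}. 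Second, the extended cuspidal support cannot exclude them: every irreducible subquotient of $\Theta_{-l}(\pi)$ has the same cuspidal support as $\theta_{-l}(\pi)$ (Lemma 4.2 of \cite{Muic_Israel}), hence the same extended cuspidal support, and Proposition \ref{extended_Adams} says precisely that all members of the relevant A-packet share it as well. The paper uses the extended cuspidal support only to pin down the \emph{shape} of a hypothetical square-integrable or tempered subquotient $\xi_0$ (namely $\phi_{\xi_0}=\psi_d$), and then kills $\xi_0$ by a long Jacquet-module/derivative analysis --- comparing $Jac_{\chi_W\nu^{(\beta-1)/2}}(\xi_0)$ against $\mu^*(\chi_W\nu^{(\alpha-1)/2}\rho\rtimes\theta_{-l}(\pi'))$ via the Tadi\'c formula, with careful tracking of the $\epsilon$-characters (signs) on the Jordan blocks, which the cuspidal support does not see. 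This occupies Propositions \ref{square} and the following two, plus a separate multiplicity-one lemma that your sketch omits entirely.

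Your diagnosis of the side condition in Lemma \ref{AG-epimorphism} is also off. The forbidden extraction ($\rho=1_{GL_1}$ with $\alpha=l+2$, i.e.\ $l+2\in\Jord_{\chi_V}(\pi)$ and the part of $\Jord_{\chi_V}(\pi)$ below $l$ alternating without gaps) occurs squarely \emph{inside} the range $l\ge 3$, resp.\ $l\ge l(\pi)+4$; it is not confined to the borderline cases $l=1$ or $l=l(\pi)+2$, and it cannot be deferred ``to a later step.'' The paper handles it within the main induction by replacing Lemma \ref{AG-epimorphism} with Mui\'c's filtration of $\Theta(\chi_V\nu^{\frac{l+1}{2}}\otimes\pi',R_{P_1}(\omega_{n,m}))$ (Corollary 3.2 of \cite{Muic_Israel}), which still yields $\xi_0\le\chi_W\nu^{\frac{l+1}{2}}\rtimes\theta_{-l}(\pi')$ and lets the derivative analysis proceed. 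Likewise the cases $\beta=\alpha\pm2$, which need Proposition 8.3 of \cite{Xu_M_parameterization} and Lemma \ref{alternation}, are part of the main inductive step, not an afterthought. As written, your argument would only prove that $\theta_{-l}(\pi)$ occurs in $\Theta_{-l}(\pi)$ as the top quotient, which is already known.
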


\begin{proof} Recall that we know (\cite{Muic_Israel}) that  $\Theta_{-1}^{down}(\pi)$ and $\Theta_{-(l(\pi)+2)}^{up}(\pi)$ are irreducible.\\

\smallskip

 We argue by induction over $n$  and  assume that $\pi$ is not supercuspidal. We break the proof in three steps: first we prove that $\Theta_{-l}(\pi)$ as above does not have a discrete series subquotients, then that it does not have a tempered, non-square-integrable subquotient, and then that it does not have no non-tempered subquotients beside the small theta lift which appears in big theta lift with multiplicity one. From now on, we fix one tower to which we lift (i.e.~$\Theta_{-l}^{down}(\pi)$ or $\Theta_{-l}^{up}(\pi)$ ) and denote the lift by $\Theta_{-l}(\pi).$

A remark on the base case: at the base of the symplectic or the orthogonal towers lie the groups which are either finite or compact. Either way, their square-integrable representations are necessarily supercuspidal, so the main Theorem holds.

As mentioned above,  below we break the proof in three propositions (and some lemmas), each of them assuming the induction assumption over $n.$
\end{proof}

\begin{prop}
\label{square}
Under the induction assumption,  $\Theta_{-l}(\pi)$ does not have square-integrable subquotients.
\end{prop}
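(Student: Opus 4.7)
Suppose for contradiction that $\sigma$ is a square-integrable subquotient of $\Theta_{-l}(\pi)$. The L-parameter of the unique irreducible quotient $\theta_{-l}(\pi)$, read off from Proposition \ref{Adams_for_square}, contains the non-unitary characters $\chi_W\nu^{\pm(l-1)/2},\chi_W\nu^{\pm(l-3)/2},\ldots$ coming from the summand $\chi_W\otimes S_1\otimes S_l$, so $\theta_{-l}(\pi)$ is non-tempered and necessarily $\sigma\neq\theta_{-l}(\pi)$. The supercuspidal case being Kudla's theorem, we may assume $\pi$ is non-supercuspidal. From an extremal pair $(\rho,a)\in\Jord(\pi)$ the M\oe glin--Tadić classification produces a discrete series $\pi_-$ of strictly smaller rank and an essentially square-integrable $\delta$ on a general linear group with $\pi\hookrightarrow\chi_V\delta\rtimes\pi_-$; self-duality of $\pi$ and $\pi_-$ dualises this to a surjection $\chi_V\delta^{\vee}\rtimes\pi_-\twoheadrightarrow\pi$. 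For $l$ in the range of the theorem, the forbidden Steinberg $\textnormal{St}_k\nu^{(-l-k)/2}$ of Lemma \ref{AG-epimorphism} is supported on the trivial character with a specific central exponent, and the reduction can be chosen (by selecting the extremal pair appropriately) so as to avoid it. Applying the lemma together with the inductive identification $\Theta_{-l}(\pi_-)=\theta_{-l}(\pi_-)$ then yields
\[
\chi_W\delta^{\vee}\rtimes\theta_{-l}(\pi_-)\twoheadrightarrow\Theta_{-l}(\pi),
\]
and by Proposition \ref{Adams_for_square} the right factor is of Arthur type with A-parameter $\chi_W\chi_V^{-1}\phi_{\pi_-}\oplus\chi_W\otimes S_1\otimes S_l$.

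Thus $\sigma$ is a square-integrable subquotient of $\chi_W\delta^{\vee}\rtimes\theta_{-l}(\pi_-)$. Every subquotient of $\Theta_{-l}(\pi)$ has the same cuspidal support as $\theta_{-l}(\pi)$ -- this support being determined by the Weil representation through the cuspidal support of $\pi$ -- hence by the remark following Proposition \ref{extended_Adams} also the same extended cuspidal support. By Propositions \ref{Adams_for_square} and \ref{extended_Adams}, this support is the multiset read off from the diagonal restriction $\psi_d=\chi_W\chi_V^{-1}\phi_\pi\oplus\chi_W\otimes S_l$. Since a discrete series has multiplicity-free Jordan blocks, matching the extended support essentially forces $\phi_\sigma=\chi_W\chi_V^{-1}\phi_\pi\oplus\chi_W\otimes S_l$ in the generic case where $\chi_V\notin\{\rho_i\}$, leaving only a small controlled family of alternatives when $\chi_V$ appears among the $\rho_i$.

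To rule out these candidates for $\sigma$, I would compute $\mu^{*}(\chi_W\delta^{\vee}\rtimes\theta_{-l}(\pi_-))$ via the Tadić formula (\ref{eq_tadic_classical})--(\ref{eq_tadic_classical2}) combined with Proposition 8.3 of \cite{Xu_M_parameterization}, which controls derivatives of the Arthur-type factor $\theta_{-l}(\pi_-)$; the contributions $\chi_W\nu^{(l-1)/2},\chi_W\nu^{(l-3)/2},\ldots$ coming from the summand $\chi_W\otimes S_1\otimes S_l$ appear there explicitly. Testing Casselman's strict-positivity criterion on each candidate for $\sigma$ against this Jacquet module expression then produces the contradiction. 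The main obstacle is the exceptional case in which $\chi_V$ appears in $\phi_\pi$: the $\chi_W$-part of the cuspidal support then mixes contributions from $\chi_W\chi_V^{-1}\phi_\pi$ and from $\chi_W\otimes S_l$, so the candidates for $\phi_\sigma$ proliferate and the combinatorics of the Jordan blocks of $\pi$ at $\rho=\chi_V$ become delicate; eliminating the residual candidates requires a careful interplay between the M\oe glin--Tadić reduction producing $\delta$, the $S_l$-summand of the A-parameter, and the sign constraints on the discrete-series character $\epsilon_\sigma$ coming from Arthur's theory.
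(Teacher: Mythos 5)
Your high-level strategy matches the paper's: assume a square-integrable subquotient $\sigma=\xi_0$, use the equality of (extended) cuspidal supports with $\theta_{-l}(\pi)$ together with Propositions \ref{Adams_for_square} and \ref{extended_Adams} to pin down possible $\phi_\sigma$, reduce $\pi$ along a derivative, push through Lemma \ref{AG-epimorphism} plus the induction hypothesis, and then compute Jacquet modules via \eqref{eq_tadic_classical} and \cite{Xu_M_parameterization} to derive a contradiction. But there is a concrete gap in your step that reduces $\pi$: you assert that \emph{``the reduction can be chosen (by selecting the extremal pair appropriately) so as to avoid''} the forbidden $\textnormal{St}_k\nu^{(-l-k)/2}$ of Lemma \ref{AG-epimorphism}. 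This is false. If $(\chi_V,l+2)\in\Jord(\pi)$ and the lower part of $\Jord_{\chi_V}(\pi)$ together with the signs force the only available cuspidal derivative to be $Jac_{\chi_V\nu^{(l+1)/2}}(\pi)\neq 0$ (for example, when $\pi$ is strongly positive with $\Jord_{\chi_V}(\pi)=\{1,3,\ldots,r,l+2,\ldots\}$ and no other $\rho'$ contributes a first derivative), then the only surjection you can produce is $\chi_V\nu^{-\frac{l+1}{2}}\rtimes\pi'\twoheadrightarrow\pi$, and $\nu^{-\frac{l+1}{2}}=\textnormal{St}_1\nu^{\frac{-l-1}{2}}$ is exactly the excluded case. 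The paper singles this out as its Case 2 and is forced to replace Lemma \ref{AG-epimorphism} by the two-step filtration of $\Theta(\chi_V\nu^{\frac{l+1}{2}}\otimes\pi',R_{P_1}(\omega_{n,m}))$ coming from Kudla's filtration (Corollary 3.2 of \cite{Muic_Israel}), and this case alone carries several delicate subcases on the sign character. Your argument simply does not touch it.

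Beyond that structural gap, the rest of your proposal is a plan rather than a proof: the phrases ``testing Casselman's strict-positivity criterion,'' ``a small controlled family of alternatives,'' and the acknowledged ``delicate combinatorics'' at $\rho=\chi_V$ leave out precisely the content that occupies most of the paper's argument (the subcases $\beta=\alpha\pm2$, the Muić segment-cutting device, the passage back through Lemma \ref{AG-epimorphism} to contradict temperedness/square-integrability of a representation on the $G$-side, and Lemma \ref{lift_non_tempered} showing $\theta_{-l}(\pi')$ is never tempered). Also note a smaller mismatch: the paper reduces $\pi$ along a \emph{single cuspidal factor} $\chi_V\rho\nu^{(\alpha-1)/2}$ with $\pi'$ only tempered, not necessarily square-integrable; your reduction to a discrete series $\pi_-$ via a longer essentially square-integrable $\delta$ is not what is used and would need the irreducibility of $\Theta_{-l}$ for tempered $\pi_-$ as an auxiliary lemma (the paper proves this as an intermediate step inside Case 1).
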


 \begin{proof}
 Assume the opposite; let $\xi_0$ be a square-integrable representation of $H_m$ such that $-l=n+\epsilon-m$ and $\xi_0\le \Theta_{-l}(\pi).$ It is known that $\xi_0$ and $\theta_{-l}(\pi)$ have the same cuspidal support (cf.~Lemma 4.2.~of \cite{Muic_Israel}). Recall that, if $l(\pi)>-1,$ we have  $\theta_{-l}^{down}(\pi)=L(\chi_W\nu^{\frac{l-1}{2}},\ldots,\chi_W\nu^{1};\theta_{-1}^{down}(\pi))$ and $\theta_{-l}^{up}(\pi)=L(\chi_W\nu^{\frac{l-1}{2}},\ldots,\chi_W\nu^{\frac{l(\pi)+3}{2}};\theta_{-l(\pi)-2}^{up}(\pi))$ (cf.~\cite{AG_tempered}, Section 4).
 If $l(\pi)=-1,$ the lifts on the both towers have  almost the same Langlands parameter (i.e.~Langlands quotient form), the only difference  being $\theta_{-1}(\pi)$ as lifts on the different towers.

This trivially implies that $\xi_0$ cannot be a cuspidal representation; more precisely, since $l\ge 3,$ it implies that $\xi_0$ cannot be, let's call it $\chi_W$--cuspidal,  namely, there exists some twist of $\chi_W,$ say $\chi_W\nu^{\frac{\beta-1}{2}}$ with $\beta \in \Z$ odd, such that $Jac_{\chi_W\nu^{\frac{\beta-1}{2}}}(\xi_0)\neq 0.$ Since $\xi_0$ is a discrete series representation, we necessarily have $\beta\ge 3.$ Let $\xi_0'$ be a tempered representation such that the following holds (this follows trivially from the parameter $\phi_{\xi_0}$)

\begin{equation}
\label{xi_o}
\xi_0\hookrightarrow \chi_W\nu^{\frac{\beta-1}{2}}\rtimes \xi_0'.
\end{equation}

\smallskip

As we mentioned above, since $\xi_0$ and $\theta_{-l}(\pi)$ have the same cuspidal support, they have the same extended cuspidal support. On the other hand, they are both representations of Arthur type (Proposition \ref{Adams_for_square}). This means that, if by $\psi$ we denote the A-parameter attached to $\theta_{-l}(\pi)$ by Proposition \ref{Adams_for_square}, we have $\phi_{\xi_0}=\psi_d$ by Proposition \ref{extended_Adams}; specifically, $\theta_{-l}(\pi)$ is a  representation with the discrete diagonal restriction (cf.~\cite{Xu_combinatorial}), meaning that $l\notin \Jord_{\chi_V}(\pi).$

\smallskip

Since $\pi$ is not cuspidal, there exists $\rho,$ an irreducible  selfcontragredient cuspidal representation of $GL(m_{\rho},F)$ and $\alpha \in\Z_{\ge 1}$ such that
\begin{equation}
\label{pi_embedd}
\pi \hookrightarrow \chi_V\nu^{\frac{\alpha-1}{2}}\rho\rtimes \pi',
\end{equation}
where $\pi'$ is a tempered representation whose L-parameter we easily read off from the L-parameter of $\pi.$ Note that $(\chi_V\rho, \alpha)\in \Jord(\pi)$ and  $l(\pi')\ge l(\pi).$ 
We need the following easy Lemma which we recall in future on numerous occasions:
\begin{lem}
\label{lift_non_tempered}
We retain the assumptions above. Recall that $l\notin \Jord_{\chi_V}(\pi).$
Then, $\theta_{-l}(\pi')$ is not tempered.
\end{lem}

\begin{proof} Assume that  $\theta_{-l}(\pi')$ is tempered. Since $l\ge 3,$ this happens only when $\theta_{-l}(\pi')$ is the first lift on the going-up tower for $\pi',$ i.e.~$l=l(\pi')+2\ge l(\pi)+2.$ 
If $l(\pi)\ge 1$ and if we look at $\theta_{-l}(\pi)=\theta_{-l}^{up}(\pi),$ (with $l>l(\pi)+2$), in order to $\theta_{-l}(\pi')$ be tempered, we need to have $l(\pi')>l(\pi).$ This happens only if $\pi'$ is square-integrable and $l(\pi)=\alpha-4$ so that $l(\pi')=\alpha-2$ (with correct values of  $\epsilon_{\pi}$). This means $l=\alpha,$ (with, of course, $\rho=1_{GL_1}$) but we saw that $\theta_{-l}(\pi)$ has to be a DDR-representation so  this is impossible.

\smallskip

 If $l(\pi)\ge 1$ and if we look at $\theta_{-l}(\pi)=\theta_{-l}^{down}(\pi),$ 
 then the ``going down'' towers for $\pi$ and $\pi'$ are the same, so that $\theta_{-l}(\pi')$ appearing in \eqref{first_epi} is $\theta_{-l}^{down}(\pi'),$ so $\theta_{-l}^{down}(\pi')$ is not tempered.

\smallskip

If $l(\pi)=-1,$ we can look at both of its lifts as the lifts on the ``going-down'' tower. Then,  it may happen that $\theta_{-l}(\pi')=\theta_{-(l(\pi')+2)}^{up}(\pi')$ is tempered, and we necessarily have $l(\pi')\ge 1.$ Thus $\rho=1_{GL_1},$
the minimal element of $\Jord_{\chi_V}(\pi)$ equals 3, with $\alpha=3,$ so that $l(\pi')=1$ and $\alpha=l,$ again, we have a contradiction. We conclude that, since $\alpha\neq l,$ $\theta_{-l}(\pi')$ is never tempered.
\end{proof}
1. Assume that $\rho\neq 1_{GL_1}$ or $\rho=1_{GL_1}$ and $\alpha\neq l+2.$
Then, we can use Lemma \ref{AG-epimorphism} to get that
\begin{equation}
\label{first_epi}
\chi_W\rho\nu^{-\frac{\alpha-1}{2}}\rtimes \Theta_{-l}(\pi')\twoheadrightarrow \Theta_{-l}(\pi).
\end{equation}
If $\pi'$ is square-integrable, we can argue by the inductive assumption that $\Theta_{-l}(\pi')=\theta_{-l}(\pi').$ If $\pi'$ is tempered, but not square-integrable, the same thing holds; indeed, then 
\[\pi'\hookrightarrow \chi_V\delta[\nu^{-\frac{\alpha-3}{2}}\rho, \nu^{\frac{\alpha-3}{2}}\rho]\rtimes \pi'',\]
where $\pi''$ is a discrete series representation. We can apply $\Theta_{-l}$ with Lemma \ref{AG-epimorphism}, together with the inductive assumption to get
\[\chi_W\delta[\nu^{-\frac{\alpha-3}{2}}\rho, \nu^{\frac{\alpha-3}{2}}\rho]\rtimes \theta_{-l}(\pi'')\twoheadrightarrow \Theta_{-l}(\pi').\]
Since $\theta_{-l}(\pi'')$ is unitarizable, the left hand side is a semi-simple representation, so is $ \Theta_{-l}(\pi'),$ necessarily making it irreducible by the Howe duality conjecture (cf.~\cite{Gan_Takeda_Howe_duality}).  

\smallskip

From \eqref{xi_o} and \eqref{first_epi}, we get that
\[\chi_W\nu^{\frac{\beta-1}{2}}\otimes \xi_0'\le \mu^{*}(\chi_W\nu^{\frac{\alpha-1}{2}}\rho\rtimes \theta_{-l}(\pi')).\]

Assume first that $\rho=1_{GL_1}$ and $\alpha=\beta.$ Since $\pi$ is square--integrable, $Jac_{\chi_V\nu^{\frac{\alpha-1}{2}}}(\pi')=0,$ so   $Jac_{\chi_W\nu^{\frac{\alpha-1}{2}}}(\theta_{-l}(\pi'))=0.$ Tadić formula \eqref{eq_tadic_classical} then gives 
$\xi_0'=\theta_{-l}(\pi'),$ but that is impossible since $\xi_0'$ is tempered, and $\theta_{-l}(\pi')$ is not, by Lemma \ref{lift_non_tempered}.

\smallskip

If $\rho\neq 1_{GL_1}$ or $\rho=1_{GL_1},$ but $\alpha\neq \beta,$ by \eqref{eq_tadic_classical} we get that $Jac_{\chi_W\nu^{\frac{\beta-1}{2}}}(\theta_{-l}(\pi'))\neq 0.$ From Lemma \ref{xi_o}, we get that $Jac_{\chi_V\nu^{\frac{\beta-1}{2}}}(\pi')\neq 0.$ Thus,
\[\pi\hookrightarrow \chi_V\nu^{\frac{\alpha-1}{2}}\rho\times \chi_V\nu^{\frac{\beta-1}{2}}\rtimes \pi'',\]
where $\pi''$ is a tempered representation. If $\rho\neq 1_{GL_1},$ or $\rho= 1_{GL_1},$ but $\frac{\alpha-1}{2}\neq \frac{\beta-1}{2}\pm 1$, we can interchange two factors, so that again $Jac_{\chi_V\nu^{\frac{\beta-1}{2}}}(\pi)\neq 0$ and we can write $\pi\hookrightarrow \chi_V\nu^{\frac{\beta-1}{2}}\rtimes \pi_0'$ for some other tempered $\pi'_0$ and repeat the argument from the first part of the proof to get $\xi_0'=\theta_{-l}(\pi_0'),$
which again can not hold.

\smallskip

The case that remained is when $\rho=1_{GL_1}$ and $\beta=\alpha \pm 2.$

\smallskip

a) Let  $\rho=1_{GL_1}$ and $\beta=\alpha + 2.$
The problematic case is when we $Jac_{\chi_V\frac{\beta-1}{2}}(\pi)=Jac_{\chi_V\frac{\alpha+1}{2}}(\pi)=0$ (but $Jac_{\chi_V\frac{\alpha+1}{2}}(\pi')\neq 0$). Recall that $\pi'=Jac_{\chi_V\frac{\alpha-1}{2}}(\pi).$
This appears precisely when we, in $\phi_{\pi},$ have summands
\[\chi_V\otimes S_{\alpha}\oplus \chi_V\otimes S_{\alpha+2}\]
with $\epsilon_{\pi}(\chi_V\otimes S_{\alpha})=-\epsilon_{\pi}(\chi_V\otimes S_{\alpha+2})$ (and if $\chi_V\otimes S_{\alpha-2}$ is in  $\phi_{\pi},$ it has the same sign as  $\chi_V\otimes S_{\alpha}$). Recall that we examine
\[\chi_W\nu^{\frac{\alpha+1}{2}}\otimes \xi_0'\le \mu^*(\chi_W\nu^{\frac{\alpha-1}{2}}\rtimes \theta_{-l}(\pi')).\]
Let $\pi''=Jac_{\chi_V\nu^{\frac{\alpha+1}{2}}}(\pi');$ this is an irreducible representation by Proposition 8.3 of \cite{Xu_M_parameterization} combined with Section 3.2 of \cite{Atobe_Min_duality}. Then, by the induction assumption, since we assume $\alpha\neq l$ (because $(\chi_V,l)\notin \Jord(\pi)$) $\theta_{-l}(\pi'')=Jac_{\chi_W\nu^{\frac{\alpha+1}{2}}}(\theta_{-l}(\pi')).$ By \eqref{eq_tadic_classical}
\[\xi_0'\le \chi_W\nu^{\frac{\alpha-1}{2}}\rtimes \theta_{-l}(\pi'').\]

We claim that the representation $\chi_W\nu^{\frac{\alpha-1}{2}}\rtimes \theta_{-l}(\pi'')$  does not have tempered subquotients.

\bigskip

 The argument can be slightly simplified when $\pi''$ is a discrete series, since then $\chi_V\nu^{\frac{\alpha-1}{2}}\rtimes \pi''$ is irreducible, as follows  from  the discussion after Lemma 5.3 in \cite{MT}, but we here give a unified argument when $\pi''$ is tempered (discrete series or not). Note that since we have here that $\alpha,\alpha+2 \in \Jord_{\chi_V}(\pi)$ then, necessarily, $l\neq \alpha,\alpha+2$ (by our assumption that $\Theta_{-l}(\pi)$ has a discrete series subquotient). By examining the extended cuspidal support of $\pi'',$ we see that the only discrete series/tempered subquotient
of $\chi_W\nu^{\frac{\alpha-1}{2}}\rtimes \theta_{-l}(\pi'')$ is necessarily tempered, and a subrepresentation of a representation 
\[\delta[\chi_W\nu^{-\frac{\alpha-1}{2}},\chi_W\nu^{\frac{\alpha-1}{2}}]\rtimes \lambda\]
for some tempered representation $\lambda.$ This means that
\begin{align*}
&\delta[\chi_W\nu^{-\frac{\alpha-1}{2}},\chi_W\nu^{\frac{\alpha-1}{2}}]\otimes \lambda \le \\ 
&\mu^{*}(\chi_W\nu^{\frac{\alpha-1}{2}}\rtimes \theta_{-l}(\pi''))= (\chi_W\nu^{\frac{\alpha-1}{2}}\otimes 1+\chi_W\nu^{-\frac{\alpha-1}{2}}+1\otimes\chi_W\nu^{\frac{\alpha-1}{2}})\rtimes \mu^*(\theta_{-l}(\pi'')).
\end{align*}
There exists an irreducible $\xi_1\otimes \xi_2\le \mu^*(\theta_{-l}(\pi''))$
such that 
\[\delta[\chi_W\nu^{-\frac{\alpha-1}{2}},\chi_W\nu^{\frac{\alpha-1}{2}}]\le A\times \xi_1,\]

where $A\in \{\chi_W\nu^{\frac{\alpha-1}{2}},\chi_W\nu^{-\frac{\alpha-1}{2}},1\}.$
Assume first that $A=\chi_W\nu^{\frac{\alpha-1}{2}}.$ 
Then, it follows that
\[\delta[\chi_W\nu^{-\frac{\alpha-1}{2}},\chi_W\nu^{\frac{\alpha-3}{2}}]\otimes \xi_2\le \mu^*(\theta_{-l}(\pi''))\]
and
\[\theta_{-l}(\pi'')\hookrightarrow \chi_W\nu^{\frac{\alpha-3}{2}}\times  \chi_W\nu^{\frac{\alpha-5}{2}}\times \cdots\times\chi_W\nu^{-\frac{\alpha-1}{2}}\rtimes \xi_2', \]
for some irreducible $\xi_2'.$

Now we employ the procedure used by Muić (\cite{Muic_Israel}, Theorem 4.1).
We ``cut'' the segment $[-\frac{\alpha-1}{2}, \frac{\alpha-3}{2}]$ into $k$ subsegments--
$[-\frac{\alpha-1}{2}, \frac{\alpha-3}{2}]=[-\frac{\alpha-1}{2},\gamma_1]\cup [\gamma_1+1,\gamma_2]\cup\cdots \cup  [\gamma_{k-1}+1,\frac{\alpha-3}{2}] $ such that 
\[\theta_{-l}(\pi'')\hookrightarrow \chi_W\delta[\nu^{\gamma_{k-1}+1},\nu^{\frac{\alpha-3}{2}}]\times \cdots \times \chi_W\delta[\nu^{-\frac{\alpha-1}{2}},\nu^{\gamma_1}].\] holds.
Note that such segments surely exist; our initial embedding corresponds to the case where all the segments are singletons. We chose the smallest possible $k$.
Known facts about reducibility (in the general linear group) of two essentially square integrable representations attached to segments insure that the minimality of $k$ enables  that, in that case, $\theta_{-l}(\pi'')$ can be embedded in any   induced representation obtained  by any permutation of those  representations (essentially discrete series). Thus, there exists an irreducible representation $\lambda_1$ such that
\[\theta_{-l}(\pi'')\hookrightarrow \chi_W\delta[\nu^{-\frac{\alpha-1}{2}},\nu^{\gamma_1}]\rtimes \lambda_1.\]
Since $l\neq \alpha,$ using Lemma \ref{AG-epimorphism}, we get
\[\pi''\hookrightarrow \chi_V\delta[\nu^{-\frac{\alpha-1}{2}},\nu^{\gamma_1}]\rtimes \widetilde{\lambda_1}^{\eta}.\]
Since $\gamma_1\le \frac{\alpha-3}{2},$ this contradicts the temperedness of $\pi''.$
\smallskip
Assume now that $A=\chi_W\nu^{-\frac{\alpha-1}{2}}$ or $A=1.$ Then, it follows that $Jac_{\chi_W\nu^{\frac{\alpha-1}{2}}}(\theta_{-l}(\pi''))\neq 0,$ so $Jac_{\chi_V\nu^{\frac{\alpha-1}{2}}}(\pi'')\neq 0,$ a contradiction with the values of $\epsilon_{\pi''}.$ In this way we have proved that $\chi_W\nu^{\frac{\alpha-1}{2}}\rtimes \theta_{-l}(\pi'')$  does not have tempered subquotients.

\bigskip

b) Let  $\rho=1_{GL_1}$ and $\beta=\alpha - 2.$ The problematic cases are the following
\begin{itemize}
\item[(i)]	 $\Jord_{\chi_V}(\pi)$ contains $\alpha,$ but not $\alpha-2$ nor $\alpha-4$
\item[(ii)] $\Jord_{\chi_V}(\pi)$  contains $\alpha,$ does not contain $\alpha-2$ but contains  $\alpha-4;\;\epsilon_{\pi}(\chi_V\otimes S_{\alpha})=\epsilon_{\pi}(\chi_V\otimes S_{\alpha-4})$ 
\item[(iii)]  $\Jord_{\chi_V}(\pi)$  contains $\alpha,\alpha-2, \alpha-4;\,\epsilon_{\pi}(\chi_V\otimes S_{\alpha})=\epsilon_{\pi}(\chi_V\otimes S_{\alpha-2})=-\epsilon_{\pi}(\chi_V\otimes S_{\alpha-4}).$ 
\end{itemize}
(i) Since we assumed $l\neq \alpha-2,$ when we examine the extended cuspidal support of $\chi_W\nu^{\frac{\alpha-1}{2}}\rtimes \theta_{-l}(\pi''),$ we see that there cannot be any tempered subquotients in this induced representation (since the summand $\chi_W\otimes S_{\alpha-2}$ is missing).

(ii) Again, as in the previous case, since $l\neq {\alpha-2},$ the extended cuspidal support of $\chi_W\nu^{\frac{\alpha-1}{2}}\rtimes \theta_{-l}(\pi'')$ cannot carry a tempered representation.

(iii)
Let $\lambda_0$ be u unique square-integrable representation such that
\[\pi\hookrightarrow \chi_V\delta[\nu^{-\frac{\alpha-3}{2}},\nu^{\frac{\alpha-1}{2}}]\rtimes \lambda_0.\] Recall that, in our situation, we have
 \[\xi_0\hookrightarrow\chi_W\nu^{\frac{\alpha-3}{2}}\rtimes \xi_0',\]
 with $\xi_0'$ tempered, so that, with the induction assumption $\Theta_{-l}(\lambda_0)=\theta_{-l}(\lambda_0)$ we get
 \[\chi_W\nu^{\frac{\alpha-3}{2}}\otimes \xi_0'\le \mu^{*}(\chi_W\delta[\nu^{-\frac{\alpha-3}{2}},\nu^{\frac{\alpha-1}{2}}]\rtimes \theta_{-l}(\lambda_0)).\]
 
 By \eqref{eq_tadic_classical} and \eqref{eq_tadic_classical2}, we have
 \[\chi_W\nu^{\frac{\alpha-3}{2}}\le\chi_W\delta[\nu^{-i},\nu^{\frac{\alpha-3}{2}}]\times \chi_W\delta[\nu^{j+1},\nu^{\frac{\alpha-1}{2}}]\times \xi_1 \]
 and
 \[\xi_0'\le \chi_W\delta[\nu^{i+1},\nu^{j}]\rtimes \xi_2,\]
 where $-\frac{\alpha-1}{2}\le i\le j\le \frac{\alpha-1}{2}$ and $\xi_1\otimes \xi_2$ is an irreducible subquotient of $\mu^{*}(\theta_{-l}(\lambda_0)).$
 Since $Jac_{\chi_V\nu^{\frac{\alpha-3}{2}}}(\lambda_0)=Jac_{\chi_W\nu^{\frac{\alpha-3}{2}}}(\theta_{-l}(\lambda_0))=0,$ we see that $j=\frac{\alpha-1}{2},\; \xi_1=1,\;i=-\frac{\alpha-3}{2}.$ This means

\[\xi_0'\le \chi_W\delta[\nu^{-\frac{\alpha-5}{2}},\nu^{\frac{\alpha-1}{2}}]\rtimes \theta_{-l}(\lambda_0).\]
Recall that $l\neq \alpha-4, \alpha-2, \alpha.$ From the extended cuspidal support of $\lambda_0$  we can conclude that there exists a discrete series representation $\xi_0''$ such that
\[\xi_0'\hookrightarrow \chi_W\delta[\nu^{-\frac{\alpha-5}{2}},\nu^{\frac{\alpha-5}{2}}]\rtimes \xi_0'' \] so that
\[\chi_W\delta[\nu^{-\frac{\alpha-5}{2}},\nu^{\frac{\alpha-5}{2}}]\otimes \xi_0''\le \mu^*(\chi_W\delta[\nu^{-\frac{\alpha-5}{2}},\nu^{\frac{\alpha-1}{2}}]\rtimes \theta_{-l}(\lambda_0)).\]
Using \eqref{eq_tadic_classical} again, we get that 
\[\chi_W\delta[\nu^{-\frac{\alpha-5}{2}},\nu^{\frac{\alpha-5}{2}}]\le \chi_W\delta[\nu^{-i_1},\nu^{\frac{\alpha-5}{2}}]\times \xi_1',\]
and
\[\xi_0''\le \chi_W\delta[\nu^{i_1+1},\nu^{\frac{\alpha-1}{2}}]\rtimes \xi_2', \]
where $\xi_1'\otimes \xi_2'\le \mu^*(\theta_{-l}(\lambda_0))$ and $-\frac{\alpha-3}{2}\le i_1\le \frac{\alpha-1}{2}.$ Assume firstly that $\xi_1'\neq 1;$ we now prove that this is impossible. Indeed, in that case $\xi_1'=\chi_W\delta[\nu^{-\frac{\alpha-5}{2}},\nu^{-i_1-1}]$ with $-i_1-1\le \frac{\alpha-5}{2};$ we prove that this is impossible. Indeed, we then have
\[\theta_{-l}(\lambda_0)\hookrightarrow \chi_W\nu^{-i_1-1}\times \chi_W\nu^{-i_1}\times \cdots \times \nu^{-\frac{\alpha-5}{2}}\rtimes \xi_2'',\]
for some irreducible $\xi_2''.$
Now we again employ the procedure used by Muić (\cite{Muic_Israel}, Theorem 4.1), in the same way we have done it in a previous case. We obtain that  there exists an irreducible representation $\lambda_1$ such that
\[\theta_{-l}(\lambda_0)\hookrightarrow \chi_W\delta[\nu^{-\frac{\alpha-5}{2}},\nu^{\gamma_1}]\rtimes \lambda_1.\]
Since we have that $l\neq \alpha-4,$ Lemma \ref{AG-epimorphism} gives
\[\lambda_0\hookrightarrow \chi_W\delta[\nu^{-\frac{\alpha-5}{2}},\nu^{\gamma_1}]\rtimes \widetilde{\Theta_{l}(\lambda_1)}^{\eta}.\]
Since $\gamma_1\le -i_1-1\le \frac{\alpha-5}{2},$ this contradicts the square-integrability of $\lambda_0.$ 

We conclude that $\xi_1'=1.$ This means that
\[\xi_0''\le \chi_W\delta[\nu^{\frac{\alpha-3}{2}},\nu^{\frac{\alpha-1}{2}}]\rtimes \theta_{-l}(\lambda_0). \]
On the other hand, we know that $ \alpha\in \Jord_{\chi_W}(\xi_0'')$ but $\alpha-2,\alpha-4\notin \Jord_{\chi_W}(\xi_0'').$  This means that 
\[\xi_0''\hookrightarrow \chi_W\nu^{\frac{\alpha-1}{2}}\rtimes \xi_0''',\]
where $\xi_0'''$ is square-integrable. This means
\[\chi_W\nu^{\frac{\alpha-1}{2}}\otimes \xi_0'''\le \mu^*(\chi_W\delta[\nu^{\frac{\alpha-3}{2}},\nu^{\frac{\alpha-1}{2}}]\rtimes \theta_{-l}(\lambda_0)).\]
Again, we easily get that $\xi_0'''\le \chi_W\nu^{\frac{\alpha-3}{2}}\rtimes \theta_{-l}(\lambda_0),$ since $Jac_{\chi_W\nu^{\frac{\alpha-1}{2}}}(\theta_{-l}(\lambda_0))=0.$  Analogously, $\xi_0'''\hookrightarrow \chi_W\nu^{\frac{\alpha-3}{2}}\rtimes \xi_0^{iv},$
where $\xi_0^{iv}$ is square-integrable. This means $\xi_0^{iv}=\theta_{-l}(\lambda_0)$ a contradiction, since $\theta_{-l}(\lambda_0)$ is not square-integrable. Indeed, $l(\lambda_0)\le l(\pi),$ and $\lambda$ and $\pi$ share the same ``going-down'' tower. So, if $\theta_{-l}(\lambda_0)=\theta_{-l}^{down}(\lambda_0)$ this is obvious, and if $\theta_{-l}(\lambda_0)=\theta_{-l}^{up}(\lambda_0)$ then note that $l>l(\pi)+2\ge l(\lambda_0)+2$ and the claim follows.

2. Assume now that the only possibility for \eqref{pi_embedd} is with  $\rho=1_{GL_1}$ and $\alpha=l+2$:
 \begin{equation}
 \label{bad_case}
 \pi\hookrightarrow \chi_V\nu^{\frac{l+1}{2}}\rtimes \pi',
 \end{equation}
 where $\pi'$ is square-integrable (since $l\notin \Jord_{\chi_V}(\pi)$); here $l(\pi)\le l(\pi')$.
 Further, it means that $\{1,3,\ldots,r\}\in \Jord_{\chi_V}(\pi)$
   with $r\le l-2,$ and $\epsilon_{\pi}$ alternates on this set (without gaps); the value of $\epsilon_{\pi}$ on the part of $\phi_{\pi}$ with $\chi_V\otimes S_{l+2}\oplus \cdots$ also alternates without gaps.

   Note that we can't apply Lemma \ref{AG-epimorphism} with $\Theta_{-l}$ on the equation \eqref{bad_case}, but we still have some information on $\Theta_{-l}(\pi).$  Indeed, since we, by \eqref{bad_case}, have 
   \[R_{P_1}(\omega_{n,n+\epsilon+l})\twoheadrightarrow \chi_V\nu^{\frac{l+1}{2}}\otimes \pi'\otimes \Theta_{-l}(\pi)\]
   it follows that $\Theta(\chi_V\nu^{\frac{l+1}{2}}\otimes \pi', R_{P_1}(\omega_{n,m}))\twoheadrightarrow \Theta_{-l}(\pi)$ (cf.~\eqref{theta_Jacquet}). 

   \smallskip
   On the other hand, we have information on $\Theta(\chi_V\nu^{\frac{l+1}{2}}\otimes \pi', R_{P_1}(\omega_{n,m}))$ by \cite{Muic_Israel}, Corollary 3.2.
   It has a filtration of the form
   \[0\subset \Theta_0\subset \Theta(\chi_V\nu^{\frac{l+1}{2}}\otimes \pi', R_{P_1}(\omega_{n,m})),\]
   where 
   \begin{align*}
   &\chi_W\nu^{-\frac{l+1}{2}}\rtimes \Theta_{-l}(\pi')\twoheadrightarrow \Theta_0\\ 
   &\Theta_{-(l+2)}(\pi')\twoheadrightarrow\Theta(\chi_V\nu^{\frac{l+1}{2}}\otimes \pi', R_{P_1}(\omega_{n,m}))/\Theta_0.
   \end{align*}

  By the induction hypothesis $\Theta_{-l}(\pi')=\theta_{-l}(\pi')$ and $\Theta_{-(l+2)}(\pi')=\theta_{-(l+2)}(\pi').$ Since $\theta_{-(l+2)}(\pi')\hookrightarrow \chi_W\nu^{-\frac{l+1}{2}}\rtimes \theta_{-l}(\pi'),$ we get that all the irreducible subquotients appearing in $\Theta(\chi_V\nu^{\frac{l+1}{2}}\otimes \pi', R_{P_1}(\omega_{n,m}))$ are the subquotients of $\chi_W\nu^{-\frac{l+1}{2}}\rtimes \theta_{-l}(\pi'),$ maybe with some multiplicities. This means that
  \begin{equation}
  \label{eq:0}
  \xi_0\le \chi_W\nu^{\frac{l+1}{2}}\rtimes \theta_{-l}(\pi');
  \end{equation}
  an analogous relation  to the one  we have exploited before.

  Let $\beta$ be, as before, such that $Jac_{\chi_W\nu^{\frac{\beta-1}{2}}}(\xi_0)\neq 0$ and $\xi_0\hookrightarrow \chi_W\nu^{\frac{\beta-1}{2}}\rtimes \xi_0',$
  for tempered $ \xi_0'.$  This gives
\[\chi_W\nu^{\frac{\beta-1}{2}}\otimes \xi_0'\le \mu^*(\chi_W\nu^{\frac{l+1}{2}}\rtimes \theta_{-l}(\pi')).\]
  Now, if $\beta=l+2,$ we get, as before, that $\xi_0'=\theta_{-l}(\pi'),$ a contradiction with Lemma \ref{lift_non_tempered}. If $\beta\neq l+2$ we have, by \eqref{eq_tadic_classical}, $Jac_{\chi_W\nu^{\frac{\beta-1}{2}}}(\theta_{-l}(\pi'))\neq 0$ and  by Lemma \ref{AG-epimorphism}, we get $Jac_{\chi_V\nu^{\frac{\beta-1}{2}}}(\pi')\neq 0.$ So, the only possibility is $\beta=l+4$ if $\chi_V\otimes S_{l+4}$ is a summand on $\phi_{\pi},$ or $\beta=l$ if $r<l-2$ or $r=l-2$ and $\epsilon_{\pi}(\chi_V\otimes S_{l-2})=\epsilon_{\pi}(\chi_V\otimes S_{l+2}).$

  \smallskip

   Assume that $\beta=l$  and $r=l-2$ with the signs on $\chi_V\otimes S_{l-2}$ and on $\chi_V\otimes S_{l+2}$ in $\phi_{\pi}$ agreeing. Let $\pi'':=Jac_{\chi_V\nu^{\frac{l-1}{2}}}(\pi')\neq 0$ be an irreducible tempered representation (cf.~\cite{Atobe_Min_duality}).  We get
  \begin{equation}
\label{eq1}
  \xi_0'\le \chi_W\nu^{\frac{l+1}{2}}\rtimes \theta_{-l}(\pi'').
  \end{equation}
  By the extended cuspidal support, we get that
  \begin{equation}
  \label{eq2}
  \xi_0'\hookrightarrow \chi_W\delta[\nu^{-\frac{l-3}{2}},\nu^{\frac{l-3}{2}}]\rtimes \lambda_1,
  \end{equation}
  for some square integrable $\lambda_1.$ This means that, for some irreducible $\xi_2,$ 
  \begin{equation}
\label{eq3}
  \chi_W\delta[\nu^{-\frac{l-3}{2}},\nu^{\frac{l-3}{2}}]\otimes \xi_2\le \mu^*( \theta_{-l}(\pi''))
  \end{equation}
  and
  \begin{equation}
  \label{eq4}
  \lambda_1\le \chi_W\nu^{\frac{l+1}{2}}\rtimes \xi_2.
  \end{equation}
  On the other hand, we have $\pi''\hookrightarrow \chi_V\delta[\nu^{-\frac{l-3}{2}},\nu^{\frac{l-3}{2}}]\rtimes \pi''',$ for a square-integrable $\pi'''.$
  Now, \eqref{eq_tadic_classical}, together with the fact that $Jac_{\chi_W\nu^{\gamma}}(\theta_{-l}(\pi'''))=0,$ for all $|\gamma|\le \frac{l-3}{2}$ (this follows from our assumptions on $\pi$-the alternating signs), gives that $\xi_2=\theta_{-l}(\pi''').$ Now, just by keeping track of the L-parameter of $\lambda_1,$ it turns out that there is a square-integrable $\lambda_2$ such that
  \[\lambda_1\hookrightarrow \chi_W\nu^{\frac{l+1}{2}}\rtimes \lambda_2.\]
  The last two displays then give $\theta_{-l}(\pi''')=\lambda_2,$ a contradiction with the square-integrability of $\lambda_2.$ Indeed, we know exactly (the relevant part) of L-parameter of $\pi''';$ we have $l(\pi''')=l-4$
  so $\theta_{-l}(\pi''')$ can't be square-integrable (as a lift on either of the towers, even in a somewhat degenerate case of $l=3$).

\smallskip

Assume now that  $\beta=l$ and $r<l-2.$ We still have \eqref{eq1}, but now both $\xi_0'$ and $\pi''$ are square-integrable representations, moreover $\xi_0'\hookrightarrow\chi_W\nu^{\frac{l+1}{2}}\rtimes \xi_0'',$ for a square-integrable $\xi_0'',$ so that 
\[\chi_W\nu^{\frac{l+1}{2}}\otimes \xi_0''\le \mu^*(\chi_W\nu^{\frac{l+1}{2}}\rtimes \theta_{-l}(\pi'')),\]
which gives $\xi_0''=\theta_{-l}(\pi'').$ Now, if $\theta_{-l}(\pi'')=\theta_{-l}^{up}(\pi''),$ it can happen that it is a quadratic representation. Now assume that it is the case; this means that $l=l(\pi'')+2,$ and this forces $r=l-4,$ and $\epsilon_{\pi''}(\chi_V\otimes S_{l-4})=-\epsilon_{\pi''}(\chi_V\otimes S_{l-2})=-\epsilon_{\pi}(\chi_V\otimes S_{l+2}).$ This determines $\epsilon_{\xi_0''}$ (by \cite{AG_tempered}), and, consequently,  $\epsilon_{\xi_0}.$
We now prove that this is impossible. Note that
\[ \epsilon_{\xi_0''}(\chi_W\otimes S_{l-2})=-\epsilon_{\xi_0''}(\chi_W\otimes S_{l}),\]
so that
\[\epsilon_{\xi_0}(\chi_W\otimes S_{l})=-\epsilon_{\xi_0}(\chi_W\otimes S_{l+2}),\]
since we may assume that $Jac_{\chi_W\nu^{\frac{l+1}{2}}}(\xi_0)=0$ (as $\beta=l+2$ is covered in the previous case).
On the other hand,
\[\theta_{-l}(\pi')=\chi_W\nu^{\frac{l-1}{2}}\rtimes \xi_0''=\chi_W\nu^{\frac{l-1}{2}}\rtimes\theta_{-l}^{up}(\pi'').\]
Indeed, the representation on the right hand side is irreducible by Section 5 of \cite{MT} and we know, from Lemma \ref{AG-epimorphism}, that the left-hand side is a subquotient of the right-hand side. We are interested in the composition series of a representation $\chi_W\nu^{\frac{l+1}{2}}\rtimes \theta_{-l}(\pi'),$ because of \eqref{eq:0}.

\smallskip

We claim that the representation (the equality is in the appropriate Grothendieck group)
\[\chi_W\nu^{\frac{l+1}{2}}\rtimes \theta_{-l}(\pi')=\chi_W\nu^{\frac{l+1}{2}}\times \chi_W\nu^{\frac{l-1}{2}} \rtimes \theta_{-l}^{up}(\pi'')\]
is of length 4, and multiplicity free.

\smallskip
Note that the representation
\[\chi_W\nu^{\frac{l+1}{2}}\rtimes \theta_{-l}^{up}(\pi'')\]
is of length 2. Indeed, if there are no elements larger then $l-2$ in $\Jord_{\chi_V}(\pi''),$ then the representation $\theta_{-l}^{up}(\pi'')$ is cuspidal and the claim is well known-the subquotients are the  the quadratic representation and its Aubert dual, say, $\lambda$ and $\widehat{\lambda}$, and we know that the representation $\chi_W\nu^{\frac{l-1}{2}}\rtimes \lambda$ is of length two.
All in all, we get that the length of 
\[\chi_W\nu^{\frac{l+1}{2}}\times \chi_W\nu^{\frac{l-1}{2}} \rtimes \theta_{-l}^{up}(\pi'')\]
equals four, and all the subquotients are representations of Arthur type with a parameter easily constructed from the L-parameter $\phi_{ \theta_{-l}^{up}(\pi'')}.$ On the other hand, when $\theta_{-l}^{up}(\pi'')$ is not cuspidal, i.e.
there is part $\chi_V\otimes S_{l+4}\oplus\cdots $ (no gaps, alternating) we can, essentially, reason analogously.
The representation 
\[\chi_W\nu^{\frac{l+1}{2}}\rtimes \theta_{-l}^{up}(\pi'')\]
is again of length two. Now we have to elaborate a bit more: using the reasoning similar to the proof of Lemma 2.2 of \cite{Muic_positive} and the fact that $\theta_{-l}^{up}(\pi'')$ is strongly positive we conclude that this representation does not have any non-tempered subquotients beside the Langlands quotient. The only other possible subquotients are the square-integrable subquotients. We know that it has an obvious subrepresentation, say, $\lambda$ such that
$\epsilon_{\lambda}=\epsilon_{\theta_{-l}^{up}(\pi'')}$ except 
$\epsilon_{\lambda}(\chi_W\otimes S_{l+2})=\epsilon_{\theta_{-l}^{up}(\pi'')}(\chi_W\otimes S_{l}).$ Let $\lambda_1$ be any other square-integrable subquotient. Note that $\phi_{\lambda}=\phi_{\lambda_1},$ the only difference is in the epsilon function. However, note that $Jac_{\chi_W\nu^{\frac{l+1}{2}}}(\lambda_1)\neq 0.$ We calculate the multiplicity of all the factors of the form
$\chi_W\nu^{\frac{l+1}{2}}\otimes \xi_2$ in $\mu^{*}(\chi_W\nu^{\frac{l+1}{2}}\rtimes \theta_{-l}^{up}(\pi'')).$ We get that necessarily $\xi_2=\theta_{-l}^{up}(\pi'')$ and the multiplicity is one. Thus, it is of length two. Note that both subquotients ($\lambda$ and $L(\chi_W\nu^{\frac{l+1}{2}};\theta_{-l}^{up}(\pi''))$) are representations of Arthur type with easy describable A-parameters-e.g. an A-parameter of $L(\chi_W\nu^{\frac{l+1}{2}};\theta_{-l}^{up}(\pi''))$ has the same summands as the parameter $\phi_{\lambda},$ except that it has $\chi_W\otimes S_1\otimes S_{l+2}$ instead of $\chi_W\otimes S_{l+2}\otimes S_1.$

\smallskip

We can prove that $\chi_W\nu^{\frac{l-1}{2}}\rtimes \lambda$ is of length two using totally analogous reasoning as above. A little bit of more effort leads to the proof that $\chi_W\nu^{\frac{l-1}{2}}\rtimes L(\chi_W\nu^{\frac{l+1}{2}};\theta_{-l}^{up}(\pi''))$ is of length two. Again, all four representations are of Arthur type and mutually different.

\smallskip
On the other hand, in the appropriate Grothendieck group, we have
\begin{align*}
&\chi_W\nu^{\frac{l+1}{2}}\times \chi_W\nu^{\frac{l-1}{2}} \rtimes \theta_{-l}(\pi'')=\\ 
&\chi_W\delta[\nu^{\frac{l-1}{2}},\nu^{\frac{l+1}{2}}]\rtimes \theta_{-l}(\pi'')+
\chi_W\zeta(\nu^{\frac{l-1}{2}},\nu^{\frac{l+1}{2}})\rtimes \theta_{-l}(\pi'').
\end{align*}
By our construction, we have that
\[\pi\hookrightarrow \chi_V\delta[\nu^{\frac{l-1}{2}},\nu^{\frac{l+1}{2}}]\rtimes \pi'',\]
so that Lemma \ref{AG-epimorphism} then gives
\[\xi_0\le \chi_W\delta[\nu^{\frac{l-1}{2}},\nu^{\frac{l+1}{2}}]\rtimes \theta_{-l}^{up}(\pi'').\]
On the other hand, by our construction (alternate signs!), we know that
\[\xi_0\hookrightarrow \chi_W\zeta(\nu^{\frac{l-1}{2}},\nu^{\frac{l+1}{2}})\rtimes \xi_0''=\chi_W\zeta(\nu^{\frac{l-1}{2}},\nu^{\frac{l+1}{2}})\rtimes\theta_{-l}^{up}(\pi'').\]
Since, as we have seen above, the representations $\chi_W\delta[\nu^{\frac{l-1}{2}},\nu^{\frac{l+1}{2}}]\rtimes \theta_{-l}(\pi'')$ and $\chi_W\zeta(\nu^{\frac{l-1}{2}},\nu^{\frac{l+1}{2}})\rtimes \theta_{-l}(\pi'')$ do not have common subquotients, we get a contradiction.

\smallskip

Assume now that $\beta=l+4.$ We can repeat our argument of a previous  case 1.a)
 with $\beta=\alpha+2.$
\end{proof}

\begin{prop}
Under the induction assumption,  $\Theta_{-l}(\pi)$ does not have tempered, non-square-integrable subquotients.
\end{prop}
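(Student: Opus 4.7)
The plan is to follow the template of Proposition~\ref{square}. Assume for contradiction that $\tau_0 \le \Theta_{-l}(\pi)$ is tempered but not square-integrable. By Lemma~4.2 of \cite{Muic_Israel}, $\tau_0$ shares the cuspidal (hence extended cuspidal) support of $\theta_{-l}(\pi)$, and Propositions~\ref{Adams_for_square} and~\ref{extended_Adams} then force $\phi_{\tau_0}$ to be the diagonal restriction of $\chi_W\chi_V^{-1}\phi_\pi \oplus \chi_W\otimes S_1 \otimes S_l$; in particular $l \notin \Jord_{\chi_V}(\pi)$, exactly as in Proposition~\ref{square}. Non-square-integrability of $\tau_0$ yields a selfcontragredient supercuspidal $\rho$ and an integer $\beta\ge 1$ with $\rho\otimes S_\beta$ of multiplicity $\ge 2$ in $\phi_{\tau_0}$, producing an embedding
\[
\tau_0 \hookrightarrow \delta[\rho\nu^{-\frac{\beta-1}{2}},\rho\nu^{\frac{\beta-1}{2}}] \rtimes \tau_0',
\]
with $\tau_0'$ tempered and $\rho\otimes S_\beta \le \phi_{\tau_0'}$.

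Next I would choose $\pi \hookrightarrow \chi_V \rho_0 \nu^{\frac{\alpha-1}{2}} \rtimes \pi'$ with $\pi'$ tempered and $(\chi_V\rho_0,\alpha)\in\Jord(\pi)$. Provided $(\rho_0,\alpha)\ne(1_{GL_1},l+2)$, Lemma~\ref{AG-epimorphism}, together with the inductive hypothesis (extended to tempered $\pi'$ by one further reduction $\pi' \hookrightarrow \chi_V\delta[\nu^{-\frac{\alpha-3}{2}}\rho_0,\nu^{\frac{\alpha-3}{2}}\rho_0]\rtimes\pi''$ and the semisimplicity/Howe duality argument used at the top of Proposition~\ref{square}), gives
\[
\chi_W \rho_0 \nu^{-\frac{\alpha-1}{2}} \rtimes \theta_{-l}(\pi') \twoheadrightarrow \Theta_{-l}(\pi).
\]
Expanding $\mu^*$ of the left hand side by Tadić's formulas \eqref{eq_tadic_classical}--\eqref{eq_tadic_classical2} and pairing with the embedding of $\tau_0$ produces only finitely many branches. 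In each, the dichotomy $\beta=\alpha$ versus $\beta\ne\alpha$ is handled as in Proposition~\ref{square}: the former forces $\tau_0' = \theta_{-l}(\pi')$, contradicting Lemma~\ref{lift_non_tempered}, while the latter produces a non-trivial $Jac_{\chi_W\nu^{\frac{\beta-1}{2}}}(\theta_{-l}(\pi'))$ and, via Lemma~\ref{AG-epimorphism} and the Muić segment-cutting trick, an inadmissible derivative of $\pi$ or a contradiction with the value of $\epsilon_\pi$ on the relevant $\chi_V\otimes S_\bullet$; the subcase $\rho_0=1_{GL_1}$, $\beta=\alpha\pm 2$ is ruled out by inspection of the extended cuspidal support of the induced representation $\chi_W\nu^{\frac{\alpha-1}{2}}\rtimes\theta_{-l}(\pi'')$, exactly as in cases~1.a)--b) of Proposition~\ref{square}.

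The principal obstacle is again the bad case $\rho_0 = 1_{GL_1}$, $\alpha = l+2$, where Lemma~\ref{AG-epimorphism} is unavailable. Here the Kudla filtration from Corollary~3.2 of \cite{Muic_Israel}, combined with the inductive identifications $\Theta_{-l}(\pi') = \theta_{-l}(\pi')$ and $\Theta_{-(l+2)}(\pi') = \theta_{-(l+2)}(\pi')$, still yields
\[
\tau_0 \le \chi_W \nu^{\frac{l+1}{2}} \rtimes \theta_{-l}(\pi'),
\]
exactly as at \eqref{eq:0} in Proposition~\ref{square}. Repeating the $\beta$-analysis there, with the extra input that $\tau_0$ is tempered (which pins down $\epsilon_{\tau_0}$ on $\chi_W\otimes S_\beta$), should close each subcase through the same length and multiplicity identities; the only genuinely new point is that the additional Langlands-type subquotients contributed by the non-square-integrability embedding of $\tau_0$ must be reconciled with the explicit Arthur-type structure (and computed length) of the subquotients of $\chi_W\nu^{\frac{l+1}{2}}\rtimes\theta_{-l}(\pi')$ established in the final portion of Proposition~\ref{square}. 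I expect this bad case to again demand the most delicate bookkeeping.
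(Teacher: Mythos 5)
Your proposal contains a fundamental error at the very first step that makes the rest of the plan internally inconsistent.

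You write that the extended-cuspidal-support/Adams argument forces $\phi_{\tau_0}$ to be the diagonal restriction of $\chi_W\chi_V^{-1}\phi_\pi \oplus \chi_W\otimes S_1\otimes S_l$, and then conclude ``in particular $l\notin\Jord_{\chi_V}(\pi)$, exactly as in Proposition~\ref{square}.'' This is backwards. A tempered but non-square-integrable representation must have a parameter with some summand of multiplicity at least $2$. Since $\phi_\pi$ is multiplicity-free (it is a discrete series parameter), the only source of multiplicity in the diagonal restriction $\psi_d$ is a collision between $\chi_W\otimes S_l$ coming from $\chi_W\otimes S_1\otimes S_l$ and a matching summand already present in $\chi_W\chi_V^{-1}\phi_\pi$; this happens precisely when $l\in\Jord_{\chi_V}(\pi)$. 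In other words, the case $l\notin\Jord_{\chi_V}(\pi)$ (which Proposition~\ref{square} addresses) gives a multiplicity-free $\psi_d$ and therefore leaves no room at all for a tempered non-square-integrable subquotient; whereas the genuinely problematic case for this Proposition is $l\in\Jord_{\chi_V}(\pi)$, which did not arise in Proposition~\ref{square}. Your own next sentence, asserting the existence of $\rho\otimes S_\beta$ of multiplicity $\ge 2$ in $\phi_{\tau_0}$, directly contradicts your claim that $l\notin\Jord_{\chi_V}(\pi)$.

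Because of this, ``follow the template of Proposition~\ref{square}'' is not a viable plan here. In the correct setting the distinguished multiplicity-two summand is $\chi_W\otimes S_l$, so the natural first derivative to take is at $\chi_W\nu^{\frac{l-1}{2}}$ (i.e.\ $\beta=l$), and the paper has to introduce two new structural inputs absent from Proposition~\ref{square}: a length analysis of $\delta[\rho\nu^{-\frac{l-3}{2}},\rho\nu^{\frac{l-1}{2}}]\rtimes\lambda$ (Lemma~\ref{structure}) and a precise description of the highest $\rho\nu^{\frac{l-1}{2}}$-derivative of a tempered representation whose parameter contains $\rho\otimes S_l$ with multiplicity $2$ and $\rho\otimes S_{l-2}$ with opposite sign (Lemma~\ref{alternation}). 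In particular $\xi_0'$ need no longer be tempered (the Langlands quotient $L(\chi_W\delta[\nu^{-\frac{l-3}{2}},\nu^{\frac{l-1}{2}}];T_0)$ can occur), and the tower/temperedness discussion for $\theta_{-l}(\pi')$ is genuinely different, since now $l-2\in\Jord_{\chi_V}(\pi)$ versus $l-2\notin\Jord_{\chi_V}(\pi)$ splits into distinct subcases. None of this is reachable from the plan as you have laid it out, and the bad case $\rho_0=1_{GL_1}$, $\alpha=l+2$, which you defer to ``delicate bookkeeping,'' in fact requires the derivative analysis of Lemma~\ref{alternation} and a separate treatment of the $l=3$ boundary.
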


\begin{proof}
We tweak and expand arguments used in Proposition \ref{square}.
Assume that $\xi_0\le \Theta_{-l}(\pi)$ is tempered, non-square-integrable representation. This forces $l\in \Jord_{\chi_V}(\pi)$ and
\[\xi_0\hookrightarrow \chi_W\delta[\nu^{-\frac{l-1}{2}},\nu^{\frac{l-1}{2}}]\rtimes T_0,\]
where $T_0$ is a square-integrable representation whose L-parameter (but not the $\epsilon$-character) is easily related to that of $\pi.$ Let $\xi_0'$ be an irreducible representation such that
\[\xi_0\hookrightarrow \chi_W\nu^{\frac{l-1}{2}}\rtimes \xi_0';\]
this is what we use for \eqref{xi_o} so that $\beta=l.$  Note that $\xi_0'$
\[\xi_0'\le \chi_W\delta[\nu^{-\frac{l-1}{2}},\nu^{\frac{l-3}{2}}]\rtimes T_0\]
and that  $\xi_0' $ is either tempered (if $l-2\in \Jord_{\chi_V}(\pi)$), square-integrable (if $l-2\notin \Jord_{\chi_V}(\pi)$) or $\xi_0'=L(\chi_W\delta[\nu^{-\frac{l-3}{2}},\nu^{\frac{l-1}{2}}];T_0).$ Indeed, we have the following Lemma. 
\begin{lem}
\label{structure}
Let $\lambda$ be a square-integrable representation of $G_n$ and such that $l\notin \Jord_{\rho}(\lambda),$ where $\rho$ is is a selfdual, irreducible supercuspidal representation of $GL_k(F)$ and $l$ of good parity (with respect to $\rho$ and $G_n$). Then, a standard representation
\[\delta[\rho\nu^{-\frac{l-3}{2}},\rho\nu^{\frac{l-1}{2}}]\rtimes \lambda\]
is
\begin{itemize}
\item[(i)] is of length three, if $l-2\notin \Jord_{\rho}(\lambda).$
There are two square-integrable subquotients and the Langlands quotient.
\item[(ii)] is of length at most two, if $l-2 \in \Jord_{\rho}(\lambda).$
Besides the Langlands quotient, there is at most one tempered subquotient.
(We prove below that this representation is, actually,  of length two).
\end{itemize}	
\end{lem}

\begin{proof}The first claim follows (and is a part of) M\oe glin-Tadić classification of the discrete series (\cite{MT}). For the second claim we use Lemma 2.2 of \cite{Muic_positive} to see that there are no other non-tempered subquotients other then the Langlands quotient. From the extended cuspidal support we see that any tempered subquotient $T$ embeds as 
\[T\hookrightarrow\delta[\rho\nu^{-\frac{l-3}{2}},\rho\nu^{\frac{l-3}{2}}]\rtimes \lambda',\]
for a square-integrable $\lambda'.$ The relation \eqref{eq_tadic_classical} then gives that $\lambda'\le \rho\nu^{\frac{l-1}{2}}\rtimes \lambda,$ and this representation has  a unique discrete series subquotient. All in all, this gives that there is (at most) one tempered subquotient in $\delta[\rho\nu^{-\frac{l-3}{2}},\rho\nu^{\frac{l-1}{2}}]\rtimes \lambda.$
\end{proof}

Later on in the proof we shall need the following result.

\begin{lem}
\label{alternation}
Let $T$ be a tempered representation such that $\phi_T$ is multiplicity free, except that $\rho\otimes S_l$ appears with multiplicity 2. Assume that $\rho\otimes S_{l-2}$ also appears in $\phi_T$ and $\epsilon_{T}(\rho\otimes S_l)=-\epsilon_{T}(\rho\otimes S_{l-2}).$  Then the highest derivative of $T$ with respect to $\rho\nu^{\frac{l-1}{2}}$ is of order one and $D^{(1)}_{\rho\nu^{\frac{l-1}{2}}}(T)=L(\delta[\rho\nu^{-\frac{l-3}{2}},\rho\nu^{\frac{l-1}{2}}];\sigma),$
where $\sigma$ is such that $T\hookrightarrow \delta[\rho\nu^{-\frac{l-1}{2}},\rho\nu^{\frac{l-1}{2}}]\rtimes \sigma$ holds.
\end{lem}

\begin{proof}
Let $T'$ be a tempered representation such that 
\[T\oplus T'=\delta[\rho\nu^{-\frac{l-1}{2}},\rho\nu^{\frac{l-1}{2}}]\rtimes \sigma.\] Thus, $\epsilon_{T'}(\rho\otimes S_l)=\epsilon_{T'}(\rho\otimes S_{l-2}).$  We calculate when the irreducible representation $\rho\nu^{\frac{l-1}{2}}\times \rho\nu^{\frac{l-1}{2}}\otimes \xi$ (for some $\xi$) appears in $\mu^*(\delta[\rho\nu^{-\frac{l-1}{2}},\rho\nu^{\frac{l-1}{2}}]\rtimes \sigma).$
We get $\xi=\delta[\rho\nu^{-\frac{l-3}{2}},\rho\nu^{\frac{l-3}{2}}]\rtimes \sigma$ and, moreover $D^{(2)}_{\rho\nu^{\frac{l-1}{2}}}(\delta[\rho\nu^{-\frac{l-1}{2}},\rho\nu^{\frac{l-1}{2}}]\rtimes \sigma)=\delta[\rho\nu^{-\frac{l-3}{2}},\rho\nu^{\frac{l-3}{2}}]\rtimes \sigma$ (which is an irreducible representation). This means that $D^{(2)}_{\rho\nu^{\frac{l-1}{2}}}(T)=0,$ and $D^{(1)}_{\rho\nu^{\frac{l-1}{2}}}(T)$ is an irreducible representation (\cite{Atobe_Min_duality}, Section 3.2). We have
\begin{align*}
&T\hookrightarrow \delta[\rho\nu^{-\frac{l-1}{2}},\rho\nu^{\frac{l-1}{2}}]\rtimes \sigma\\ 
&\hookrightarrow \rho\nu^{\frac{l-1}{2}}\times \delta[\rho\nu^{-\frac{l-1}{2}},\rho\nu^{\frac{l-3}{2}}]\rtimes \sigma.
\end{align*}
Let $\xi\le \delta[\rho\nu^{-\frac{l-1}{2}},\rho\nu^{\frac{l-3}{2}}]\rtimes \sigma$ be an irreducible subquotient contributing in the embedding of $T.$ If $\delta[\rho\nu^{-\frac{l-1}{2}},\rho\nu^{\frac{l-3}{2}}]\rtimes \sigma$ were irreducible, or reducible, but the tempered subquotient of it would be $\xi,$ we would have
\[T\hookrightarrow \rho\nu^{\frac{l-1}{2}}\times \delta[\rho\nu^{-\frac{l-3}{2}},\rho\nu^{\frac{l-1}{2}}]\rtimes \sigma,\]
so that
\[T\hookrightarrow \rho\nu^{\frac{l-1}{2}}\times \rho\nu^{\frac{l-1}{2}}\rtimes \cdots,\]
a contradiction. Additionally, this guarantees that in Lemma \ref{structure}, in the second case, the representation is of length two.

 \end{proof}
Now we follow the line of proof of Proposition \ref{square}; we use the same notation.

\noindent 1. Assume that $\rho\neq 1_{GL_1}$ or $\rho=1_{GL_1}$ and $\alpha\neq l+2.$ \\ 

First again assume additionally that  $\rho=1_{GL_1}$ and $\alpha=\beta=l.$
We get that $\xi_0'=\theta_{-l}(\pi').$ Now we discuss why this is impossible.
If $l-2\in \Jord_{\chi_V}(\pi),$ then  $\epsilon_{\pi}(\chi_V\otimes S_{l-2})=\epsilon_{\pi}(\chi_V\otimes S_{l}),$ the representation  $\pi'$ is tempered and non-square integrable and 
$\pi$ and $\pi'$ share the same going-down tower since $l(\pi)=l(\pi').$ If $\theta_{-l}(\pi')=\theta_{-l}^{down}(\pi'),$ from the explicit form $\theta_{-l}^{down}(\pi')$ we see that it cannot be equal to $\xi_0'.$ On the other hand, when $\theta_{-l}(\pi')=\theta_{-l}^{up}(\pi'),$ we can have $\theta_{-l}^{up}(\pi')=L(\chi_W\delta[\nu^{-\frac{l-3}{2}},\nu^{\frac{l-1}{2}}];T_0)=\xi_0'$, but then  we must have $l=l(\pi')+2=l(\pi)+2.$ In that case,  easily follows that $\xi_0=\theta_{-l}^{up}(\pi)$ but this cannot happen by our assumption on $l$ in Theorem \ref{going_down_tower}.   On the other hand, if $l-2\notin \Jord_{\chi_V}(\pi),$ we have $l(\pi')\ge l(\pi).$ If $l(\pi)\ge 1,$ then (since $\pi$ and $\pi'$ have the same ``going down tower''), if $\theta_{-l}(\pi)=\theta_{-l}^{down}(\pi),$
then $\theta_{-l}(\pi')=\theta_{-l}^{down}(\pi')$ can't be tempered.
If, on the other hand,  if $\theta_{-l}(\pi)=\theta_{-l}^{up}(\pi),$ and we assumed that $l\ge l(\pi)+4,$ it can happen that $\theta_{-l}^{up}(\pi')$ is the first lift on the going-up tower for $\pi'$ if $l=l(\pi')+2=l(\pi)+4.$
Recall that although $Jac_{\chi_W\nu^{\frac{l-1}{2}}}(\xi_0)$ is not irreducible, we still have \eqref{xi_o}, where $\xi_0'$ is a square-integrable representation with the same signs on $\chi_W\otimes S_{l-2}$ and $\chi_W\otimes S_{l}.$
But then $\xi_0'$ can't be equal to $\theta_{-l}^{up}(\pi'),$ since this representation has the opposite signs on $\chi_W\otimes S_{l-2}$ and $\chi_W\otimes S_{l}.$
Now we discuss the case of  $l(\pi)=-1.$ If $l>3$ the discussion is the same as the case with lifting both $\pi$ and $\pi'$ on the going-down towers. If, on the other hand, have $l=3,$ then $l(\pi')=1.$
Then, the lifts $\theta_{-3}(\pi)$ can be treated as $\theta_{-3}^{down}(\pi)$  for both $\epsilon$--towers, but,  for a choice of $\epsilon$, lift    on that tower is the $\theta_{-3}^{up}(\pi')$--the first lift on the going up tower for $\pi';$ a square-integrable representation. But this square-integrable representation is not one of the square-integrable subquotients of $\chi_W\delta[\nu^{-\frac{l-3}{2}},\nu^{\frac{l-1}{2}}]\rtimes T_0=\chi_W\delta[\nu^{0},\nu^{1}]\rtimes T_0$ as the signs directly show.
\smallskip

 Now we consider the case $\rho\neq 1_{GL_1}$ or $\rho=1_{GL_1},$ but $\alpha\neq l.$ Again, by the same argument as in Proposition \ref{square}, we resolve the case $\rho\neq 1_{GL_1},$ or $\rho= 1_{GL_1},$ but $\frac{\alpha-1}{2}\neq \frac{\beta-1}{2}\pm 1.$ We are left with the following cases.

\smallskip
a) Let  $\rho=1_{GL_1}$ and $l=\alpha + 2;$ this corresponds to the case 1.a) of Proposition \ref{square}. We can follow the argument completely.
\begin{comment}
 just comment on a delicate issue: if we examine $\theta_{-l}(\pi)=\theta_{-l}^{down}(\pi);$ is then the lift  $\theta_{-l}(\pi'')$ (the lift on the same tower) the lift on the going-down tower for $\pi'';$ in other words do $\pi$
 and $\pi''$ have the same going-down tower? After brief examination, it easily follows that the only situation where $\theta_{-l}(\pi'')$ is a lift on the going-up tower for $\pi''$ is when $l=5$ and $\pi$ has the L-parameter
 \[\pi\ldots \chi_V\otimes S_3\oplus \chi_V\otimes S_5\oplus \cdots, \]
 with opposite signs on $\chi_V\otimes S_3$ and $\chi_V\otimes S_5;$
 then
\[\pi''\ldots \chi_V\otimes S_1\oplus \chi_V\otimes S_3\oplus \oplus \cdots. \]
This means $l(\pi)=-1$ and $l(\pi'')=3.$ Then, lifts of $\pi$ on both towers can be considered as "the lifts on the going-down tower" because of the structure of the lifts (cf.~\cite{BH_theta}). Then, on exactly one tower, $\theta_{-5}(\pi'')$ is the first lift on the going-up tower (we get $\chi_W$-cuspidal representation). We start with this tower (i.e. the lift $\theta_{-5}(\pi)$ on this tower); recall that the existence of tempered subquotient of $\chi_W\nu^{1}\rtimes \theta_{-5}(\pi'')$ would force
\[\chi_W\delta[\nu^{-1},\nu^1]\otimes \lambda\le \mu^*(\chi_W\nu^{1}\rtimes \theta_{-5}(\pi'')),\]
but this is impossible, since $\theta_{-5}(\pi'')$ is $\chi_W$--cuspidal.
\end{comment}

\smallskip

 This proves that the representation $\chi_W\nu^{\frac{l-3}{2}}\rtimes \theta_{-l}(\pi'')$ does not have tempered subquotients. In this Proposition, we still have to address the case $\xi_0'=L(\chi_W\delta[\nu^{-\frac{l-3}{2}},\nu^{\frac{l-1}{2}}];T_0).$ We can argue in a very similar way as in the tempered case; i.e.~we prove that we can't have
 \[\chi_W\delta[\nu^{-\frac{l-1}{2}},\nu^{\frac{l-3}{2}}]\otimes T_0\le \mu^{*}(\chi_W\nu^{\frac{l-3}{2}}\rtimes \theta_{-l}(\pi'')).\]
 Using the same notation as in the tempered case, we prove that the above cannot hold by using the temperedness of $\pi''$ in the case $A=\chi_W\nu^{\frac{l-3}{2}}.$ Here we can't use Lemma \ref{AG-epimorphism} directly, but we use the filtration of Corollary 3.2 of \cite{Muic_Israel} to the same effect. In cases $A=\chi_W\nu^{-\frac{l-3}{2}}$ and $A=1$ we use the fact that $Jac_{\chi_V\nu^{\frac{l-3}{2}}}(\pi'')=0$   to prove that the last display can't hold.

\noindent b) Let  $\rho=1_{GL_1}$ and $l=\alpha - 2.$ We are in the case 2. of Proposition \ref{square}, so we get that L-parameter of $\pi$ is as described in that case (no gaps, alternating signs), just $\chi_V\otimes S_l$ is now a summand in $\phi_{\pi};$ there are no gaps between the summands, and the only two consecutive summands with the same sign are $\chi_V\otimes S_l$ and  $\chi_V\otimes S_{l+2}.$ Now $\pi'=Jac_{\chi_V\nu^{\frac{l+1}{2}}}(\pi)$ is a tempered representation with $\epsilon_{\pi'}(\chi_V\otimes S_{l-2})=-\epsilon_{\pi'}(\chi_V\otimes S_{l}).$  If, in $\xi_0,$ we have the same signs on $2\chi_W\otimes S_l$ and $\chi_W\otimes S_{l+2}$ we would have $\xi_0'':=Jac_{\chi_W\nu^{\frac{l+1}{2}}}(\xi_0)\neq 0$ tempered, and, again, $\xi_0''=\theta_{-l}(\pi'),$ a contradiction. Indeed, here $l=l(\pi')=l(\pi)$ so necessarily $\theta_{-l}(\pi')=\theta_{-l}^{down}(\pi').$ So, we can assume that in  $\xi_0,$ we have  $\epsilon_{\xi_0}(\chi_W\otimes S_l)=-\epsilon_{\xi_0}(\chi_W\otimes S_{l+2}).$

\smallskip

 By Lemma \ref{alternation}, $Jac_{\chi_V\nu^{\frac{l-1}{2}}}(\pi')$ is irreducible and equal to $L(\chi_V\delta[\nu^{-\frac{l-3}{2}},\nu^{\frac{l-1}{2}}];\pi_0),$ where $\pi_0$ is a  square integrable representation such that
\[\pi'\hookrightarrow \chi_V\delta[\nu^{-\frac{l-1}{2}},\nu^{\frac{l-1}{2}}]\rtimes \pi_0.\] We denote $\pi''=L(\chi_V\delta[\nu^{-\frac{l-3}{2}},\nu^{\frac{l-1}{2}}];\pi_0).$

\smallskip
 
We claim that $Jac_{\chi_W\nu^{\frac{l-1}{2}}}(\theta_{-l}(\pi'))$ is irreducible and equal to $\theta_{-l}(\pi'').$ Indeed, assume that 
\[\chi_W\nu^{\frac{l-1}{2}}\times \chi_W\nu^{\frac{l-1}{2}}\otimes \lambda\le \mu^*(\theta_{-l}(\pi'))\]
for some irreducible $\lambda.$ Using Lemma \ref{AG-epimorphism}, we would get that $D^{(2)}_{\chi_V\nu^{\frac{l-1}{2}}}(\pi')\neq 0,$ a contradiction, according to Lemma \ref{alternation}. Thus, again, following \cite{Atobe_Min_duality}, we conclude that $Jac_{\chi_W\nu^{\frac{l-1}{2}}}(\theta_{-l}(\pi'))$ is irreducible. Using this fact together with
\[\theta_{-l}(\pi')\hookrightarrow \chi_W\nu^{\frac{l-1}{2}}\rtimes (\widetilde{\Theta_{-l}(\pi'')}^{\eta}\]
followed by Frobenius reciprocity, we conclude that
\[\theta_{-l}(\pi')\hookrightarrow \chi_W\nu^{\frac{l-1}{2}}\rtimes \theta_{-l}(\pi'').\]

\smallskip

Since $\xi_0$ is not a square-integrable representation any more, as in Proposition \ref{square}, the situation with its derivatives is a bit more complicated.\\ 
 Assume, firstly, that the summands $\chi_W\otimes S_{l-2}$ and $2\chi_W\otimes S_{l}$ are of the same sign (as the representation $T'$ in Lemma \ref{alternation}). We easily get that $D^{(1)}_{\chi_V\nu^{\frac{l-1}{2}}}(\xi_0)$ 
is not irreducible, but we can pick $\xi_0'$ to be the tempered representation with the summands $2\chi_W\otimes S_{l-2}\oplus \chi_W\otimes S_{l}$
(with the same sign as in $\xi_0$, the rest of the L-parameter is the same as in $\xi_0$) since
\[\xi_0\hookrightarrow \chi_W\nu^{\frac{l-1}{2}}\rtimes \xi_0'.\]
We, thus, have that \eqref{eq1}, \eqref{eq2}, \eqref{eq3} and \eqref{eq4} hold.
Now we modify the argument a little bit. Namely, by Lemma \ref{AG-epimorphism} and the definition of $\pi_0$ (recall that $\pi''$ is not tempered as in Proposition \ref{square}), we have
\[\theta_{-l}(\pi'')\hookrightarrow \chi_W\delta[\nu^{-\frac{l-1}{2}}, \nu^{\frac{l-3}{2}}]\rtimes \theta_{-l}(\pi_0),\]
so that, together with \eqref{eq3}, we get
\[ \chi_W\delta[\nu^{-\frac{l-3}{2}}, \nu^{\frac{l-3}{2}}]\otimes \xi_2\le \mu^*(\chi_W\delta[\nu^{-\frac{l-1}{2}}, \nu^{\frac{l-3}{2}}]\rtimes \theta_{-l}(\pi_0)).\]
Tadić formula \eqref{eq_tadic_classical}, together with the structure of $\pi_0$ (as in corresponding part of the proof of Proposition \ref{square}), gives
\[\lambda_1\le \chi_W\nu^{\frac{l-1}{2}}\times \chi_W\nu^{\frac{l+1}{2}}\rtimes \theta_{-l}(\pi_0).\]
By examining the extended cuspidal support of $\lambda_1,$ we conclude that there exists a square-integrable $\lambda_2$ such that 
\[\lambda_1\hookrightarrow \chi_W\nu^{\frac{l-1}{2}}\rtimes \lambda_2.\]
Since $l,l+2\notin \Jord a_{\chi_V}(\pi_0),$ we conclude 
\[\lambda_2\le \chi_W\nu^{\frac{l+1}{2}}\rtimes \theta_{-l}(\pi_0),\]
and since 
\[\lambda_2\hookrightarrow \chi_W\nu^{\frac{l+1}{2}}\rtimes \lambda_3\]
for a square-integrable $\lambda_3,$
we conclude that $\lambda_3=\theta_{-l}(\pi_0),$ a contradiction. Indeed, note that the conditions on $\phi_{\pi}$ and $\epsilon_{\pi}$ give that $l(\pi)=l,$
so that we necessarily look at $\theta_{-l}(\pi)=\theta_{-l}^{down}(\pi).$ Also, $l(\pi_0)=l-2\ge 1$ and $\pi$ and $\pi_0$ have the same going-down tower so that $\theta_{-l}(\pi_0)=\theta_{-l}^{down}(\pi_0),$ a non-tempered representation.

\smallskip

Secondly, we discuss the situation in which  the summands $\chi_W\otimes S_{l-2}$ and $2\chi_W\otimes S_{l}$ in $\xi_0$ are of the opposite signs, so that $\xi_0'=L(\chi_W\delta[\nu^{-\frac{l-3}{2}},\nu^{\frac{l-1}{2}}];T_0),$ where $T_0$ is a square-integrable representation defined in the beginning of the proof of this Proposition. To remind on the situation with the functions $\epsilon_{\xi_0}$ and $\epsilon_{\pi},$ recall that we now have:
\[\pi \ldots \chi_V\oplus S_1\oplus \cdots\overset{-\epsilon}{\chi_V\otimes S_{l-2}}\oplus \overset{\epsilon}{\chi_V\otimes S_{l}}\oplus \overset{\epsilon}{\chi_V\otimes S_{l+2}}\oplus \cdots\]
\[\xi_0\ldots \chi_W\oplus S_1\oplus \cdots\overset{-\epsilon_1}{\chi_W\otimes S_{l-2}}\oplus \overset{\epsilon_1}{2\chi_W\otimes S_{l}}\oplus \overset{-\epsilon_1}{\chi_W\otimes S_{l+2}}\oplus \cdots\]
for some $\epsilon,\epsilon_1\in\{1,-1\}.$
We have that \eqref{eq1} holds with this $\xi_0',$ but, instead of \eqref{eq2}, we have
\[\chi_W\delta[\nu^{-\frac{l-1}{2}},\nu^{\frac{l-3}{2}}]\otimes T_0\le \mu^*(\chi_W\nu^{\frac{l+1}{2}}\rtimes \theta_{-l}(\pi'')).\]
This gives that
\[T_0\le \chi_W\nu^{\frac{l+1}{2}}\rtimes \xi_2,\]
where $\chi_W\delta[\nu^{-\frac{l-1}{2}},\nu^{\frac{l-3}{2}}]\otimes \xi_2\le (\theta_{-l}(\pi'')).$
Tadić formula gives
\[\chi_W\delta[\nu^{-\frac{l-1}{2}},\nu^{\frac{l-3}{2}}]\le \chi_W\delta[\nu^{-i},\nu^{\frac{l-3}{2}}]\times \chi_W\delta[\nu^{j+1},\nu^{\frac{l-1}{2}}]\times \xi_1'\]
and
\[\xi_2\le \chi_W\delta[\nu^{i+1},\nu^{j}]\rtimes \xi_2', \]
where $\xi_1'\otimes \xi_2'\le \mu^{*}(\theta_{-l}(\pi_0)).$
This means that $\xi_1'=\chi_W\delta[\nu^{-\frac{l-1}{2}},\nu^{-i-1}],$
for  $-i-1\le \frac{l-3}{2}.$ From the Adams-Arthur parameter of $\theta_{-l}(\pi_0)$ (Proposition \ref{Adams_for_square} and \cite{Xu_M_parameterization}) it follows that either  $\xi_1'=1$ or $\xi_1'=\chi_W\nu^{-\frac{l-1}{2}}.$ 

\smallskip

Assume firstly that $\xi_1'=1.$ Then, $\xi_2'=\theta_{-l}(\pi_0)$ with $i=j=\frac{l-1}{2}.$
This means $\xi_2=\theta_{-l}(\pi_0),$ so that we have
\[T_0\le \chi_W\nu^{\frac{l+1}{2}}\rtimes \theta_{-l}^{down}(\pi_0).\]
Again, as in several instances above, we have $T_0\hookrightarrow \chi_W\nu^{\frac{l+1}{2}}\rtimes T_1,$ with $T_1$ square integrable. From this follows that $T_1=\theta_{-l}(\pi_0),$ a contradiction with the fact that $\theta_{-l}(\pi_0)=\theta_{-l}^{down}(\pi_0),$ as just above.

\smallskip
Assume now that $\xi_1'=\chi_W\nu^{-\frac{l-1}{2}};$ then $\xi_2'=\theta_{-(l-2)}(\pi_0).$ Analogously as above, we now get
\[T_0\le \chi_W\nu^{\frac{l+1}{2}}\times \chi_W\nu^{\frac{l-1}{2}}\rtimes \theta_{-(l-2)}^{down}(\pi_0).\]
Since there is a square integrable representation $T_2$ such that  $T_1\hookrightarrow \chi_W\nu^{\frac{l-1}{2}}\rtimes T_2,$ where $T_1$ is the same square-integrable representation as in the previous case, we get that $T_2=\theta_{-(l-2)}^{down}(\pi_0).$ Now, if $l>3,$ $\theta_{-(l-2)}^{down}(\pi_0)$ is non-tempered, and we got a contradiction. If $l=3,$ we have to argue a bit more. Recall that in $l=3$--case, we have the following $\chi_V$-- or $\chi_W$--parts of the L-parameters of the corresponding tempered representations
\begin{align*}
&\pi \ldots \overset{-\epsilon}{\chi_V\otimes S_1}\oplus \overset{\epsilon}{\chi_V\otimes S_3}\oplus \overset{\epsilon}{\chi_V\otimes S_5}\oplus (*),\\ 
&\pi'\ldots \overset{-\epsilon}{\chi_V\otimes S_1}\oplus 2 \overset{\epsilon}{\chi_V\otimes S_3}\oplus\qquad \oplus  (*),\\ 
&\xi_0 \ldots \overset{-\epsilon_1}{\chi_W\otimes S_1}\oplus 2\overset{\epsilon_1}{\chi_W\otimes S_3}\oplus \overset{-\epsilon_1}{\chi_W\otimes S_5}\oplus (**),\\
&\pi_0\ldots \overset{-\epsilon}{\chi_V\otimes S_1}\oplus\qquad\oplus \qquad \oplus (*).
\end{align*}
Here,  if it is non-zero, $(*)$ denotes the same summands (with the the same $\epsilon_{\pi}$) with $\epsilon_{\pi}$ alternating on them, without gaps.  Recall that
$\xi_0\hookrightarrow \chi_W\delta[\nu^{-1},\nu^{1}]\rtimes T_0,$ and by \eqref{eq:0},
we have
\[\chi_W\delta[\nu^{-1},\nu^{1}]\otimes T_0\le \mu^*(\chi_W\nu^2\rtimes \theta_{-3}(\pi')).\]
This means that for some irreducible $\xi_2',$ we have $\chi_W\delta[\nu^{-1},\nu^{1}]\otimes \xi_2'\le \mu^*(\theta_{-3}(\pi'))$ and, then,
\[T_0\le \chi_W\nu^2\rtimes \xi_2'.\]
As before, since $T_0\hookrightarrow \chi_W\nu^2\rtimes T_1,$ we get that $T_1=\xi_2'$
and we want to derive a contradiction here.
Now, since $\theta_{-3}(\pi')\hookrightarrow \chi_W\delta[\nu^{-1},\nu^{1}]\rtimes \theta_{-3}(\pi_0),$ using \eqref{eq_tadic_classical}, we easily get
\[\xi_2'\le \chi_W\nu^1\rtimes \theta_{-1}^{down}(\pi_0).\]
Now we assume the contrary, i.e.~$\xi_2'=T_1$ is  square-integrable, so that 
\[\chi_W\delta[\nu^{-1},\nu^{1}]\otimes T_1 \le \mu^*(\theta_{-3}(\pi')).\]
Note that, then, $\chi_W\nu^1\otimes  \theta_{-1}^{down}(\pi_0)\le \mu^*(\xi_2').$
By using Frobenius, we get
\[\mu^*(\theta_{-3}(\pi'))\hookrightarrow \chi_W\nu^1\times\chi_W\nu^0\times\chi_W\nu^{-1}\times\chi_W\nu^1\rtimes \lambda\cong \chi_W\nu^1\times\chi_W\nu^0\times\chi_W\nu^{1}\times\chi_W\nu^{-1}\rtimes \lambda,\]
for some irreducible representation $\lambda.$ We have already concluded that $Jac_{\chi_W\nu^{\frac{l-1}{2}}}(\theta_{-l}(\pi'))$ is irreducible and equal to $\theta_{-l}(\pi''),$ so, again using Frobenius together with Lemme III.3 from the second chapter of \cite{MVW_theta}, we get
\[\theta_{-3}(\pi'')\hookrightarrow \chi_W\nu^0\times\chi_W\nu^{1}\rtimes \lambda',\]
for some irreducible $\lambda'.$ When we apply Lemma \ref{AG-epimorphism}, we get
\[\pi''\hookrightarrow \chi_V\nu^0\times\chi_V\nu^{1}\rtimes \lambda''\]
for some irreducible $\lambda''.$

\smallskip

 We now prove that this is impossible; namely we prove that $Jac_{\chi_V\nu^0}(\pi'')$ is irreducible and equal to $L(\chi_V\nu^1;\pi_0);$ moreover, $\chi_V\nu^1\rtimes \pi_0$ is reducible, and $L(\chi_V\nu^1;\pi_0)$ does not embed in $\chi_V\nu^{1}\rtimes \tau$ for any $ \tau.$ Indeed, note that in the appropriate Grothendieck group we have
\[\chi_V\delta[\nu^0,\nu^1]\rtimes \pi_0+\chi_V\zeta(\nu^0,\nu^1)\rtimes \pi_0=\chi_V\nu^0\rtimes L(\chi_V\nu^1;\pi_0)+\chi_V\nu^0\rtimes St_{\chi_V,\pi_0},\]
where by $ St_{\chi_V,\pi_0}$ we denote the unique irreducible subrepresentation of 
$\chi_V\nu^1\rtimes \pi_0$ (which is square-integrable). Indeed, using reasoning similar to already used above (as in \cite{Muic_positive}), we can prove that the representation $\chi_V\nu^1\rtimes \pi_0$ is of length 2 so that the above equality (in the Grothendieck group) becomes obvious. We know that the representation $\chi_V\nu^0\rtimes St_{\chi_V,\pi_0}$ is of length two (e.g.~\cite{MT})-the sum of the two tempered representations. The length of  $\chi_V\nu^0\rtimes L(\chi_V\nu^1;\pi_0)$ is at most two (we calculate $Jac_{\chi_V\nu^0}(\chi_V\nu^0\rtimes L(\chi_V\nu^1;\pi_0))$ and note that $\chi_V\nu^0\rtimes L(\chi_V\nu^1;\pi_0)$ is semisimple). It is of length two, since $L(\chi_V\delta[\nu^0,\nu^1]; \pi_0)+L(\chi_V\nu^1;\chi_V\nu^0\rtimes\pi_0)\le \chi_V\nu^0\rtimes L(\chi_V\nu^1;\pi_0).$ We get that $Jac_{\chi_V\nu^0}(\chi_V\nu^0\rtimes L(\chi_V\nu^1;\pi_0))=2L(\chi_V\nu^1;\pi_0)$ so that $Jac_{\chi_V\nu^0}(\pi'')=Jac_{\chi_V\nu^0}(L(\chi_V\delta[\nu^0,\nu^1]; \pi_0))=L(\chi_V\nu^1;\pi_0),$ as needed.

 \end{proof}

\begin{prop}
Under the induction assumption,  $\Theta_{-l}(\pi)$ does not have a non-tempered  subquotient different from the small theta lift $\theta_{-l}(\pi)$ .
\end{prop}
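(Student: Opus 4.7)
The plan is to mirror the structure of Proposition \ref{square} and the preceding tempered proposition, arguing by induction on $n$. Assume for contradiction that some non-tempered $\xi_0 \leq \Theta_{-l}(\pi)$ satisfies either $\xi_0 \neq \theta_{-l}(\pi)$, or $\xi_0 = \theta_{-l}(\pi)$ appearing with multiplicity at least two. By the two preceding propositions every subquotient of $\Theta_{-l}(\pi)$ is already non-tempered, so this is the only remaining obstruction. As before, $\xi_0$ and $\theta_{-l}(\pi)$ share the same cuspidal support and hence the same extended cuspidal support, which via Propositions \ref{Adams_for_square} and \ref{extended_Adams} is determined by the diagonal restriction of the A-parameter $\chi_W\chi_V^{-1}\phi_\pi \oplus \chi_W \otimes S_1 \otimes S_l$; writing $\xi_0 = L(\chi_W\nu^{s_r}\delta_r,\ldots,\chi_W\nu^{s_1}\delta_1;T)$ in Langlands form, this constrains the segments $\delta_i$ to supports visible in that multiset.

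Next, as in Proposition \ref{square}, I would embed $\pi \hookrightarrow \chi_V\nu^{(\alpha-1)/2}\rho \rtimes \pi'$ with $\pi'$ tempered and split into the two principal cases. When $\rho \neq 1_{GL_1}$, or $\rho = 1_{GL_1}$ with $\alpha \neq l+2$, Lemma \ref{AG-epimorphism} together with the inductive hypothesis gives $\chi_W\rho\nu^{-(\alpha-1)/2} \rtimes \theta_{-l}(\pi') \twoheadrightarrow \Theta_{-l}(\pi)$, so $\xi_0$ is a subquotient of the left-hand side; when $\rho = 1_{GL_1}$ and $\alpha = l+2$, one invokes the Kudla filtration of $\Theta(\chi_V\nu^{(l+1)/2}\otimes\pi', R_{P_1}(\omega_{n,m}))$ from Corollary 3.2 of \cite{Muic_Israel} to conclude $\xi_0 \leq \chi_W\nu^{(l+1)/2} \rtimes \theta_{-l}(\pi')$, exactly as in the previous propositions. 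In either case the problem reduces to identifying the non-tempered composition factors of $\chi_W\rho\nu^{\pm(\alpha-1)/2} \rtimes \theta_{-l}(\pi')$, with $\theta_{-l}(\pi')$ an explicit irreducible representation of Arthur type by the inductive hypothesis.

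I would then peel off the leading Langlands exponent of $\xi_0$ via Tadić's formula \eqref{eq_tadic_classical} combined with the Atobe--Minguez derivative machinery, paralleling the sign-alternation subcases (i)--(iii) of Proposition \ref{square} but now with Langlands quotients in place of discrete series. The extended cuspidal support argument, together with Lemma \ref{AG-epimorphism} and the explicit identification of $\theta_{-l}(\pi'')$ for the various tempered $\pi''$ arising as derivatives of $\pi$ (justified by Proposition 8.3 of \cite{Xu_M_parameterization} and the induction), should rule out every candidate for $\xi_0$ except $\theta_{-l}(\pi)$ itself. For the multiplicity-one statement, I would appeal to Howe duality: if $\theta_{-l}(\pi)$ appeared twice as a subquotient, the kernel of the canonical surjection $\Theta_{-l}(\pi) \twoheadrightarrow \theta_{-l}(\pi)$ would again admit $\theta_{-l}(\pi)$ as an irreducible quotient, contradicting its uniqueness as the irreducible quotient of the big theta.

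The main obstacle, as in the earlier propositions, will be the subcase $\rho = 1_{GL_1}$, $\alpha = l+2$: the Kudla filtration provides only a two-step filtration rather than an outright surjection, so one must combine the contributions of the two graded pieces $\chi_W\nu^{-(l+1)/2} \rtimes \theta_{-l}(\pi')$ and $\theta_{-(l+2)}(\pi')$, exploiting the embedding $\theta_{-(l+2)}(\pi') \hookrightarrow \chi_W\nu^{-(l+1)/2} \rtimes \theta_{-l}(\pi')$ to reduce to a single induced representation. Just as at the end of Proposition \ref{square}, the subcases with alternating signs (or no gaps) in $\phi_\pi$ are expected to require the more subtle length-computation arguments based on strongly positive representations and Muić's procedure for bounding subquotients of standard modules, now applied to locate the Langlands data of $\xi_0$ within the Arthur-type subquotients of the relevant induced representations.
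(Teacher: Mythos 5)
Your plan is to reproduce the heavy case analysis of the square-integrable and tempered propositions one more time, splitting on whether $\rho=1_{GL_1}$ and $\alpha=l+2$, tracking the Langlands data of a putative non-tempered $\xi_0$ through Tadi\'c-formula derivatives. The paper does something completely different and far shorter. It invokes Theorem~4.1 of Mui\'c's paper directly: any non-tempered subquotient $\xi_0$ of $\Theta_{-l}(\pi)$ embeds as $\xi_0\hookrightarrow\chi_W\nu^{-(l-1)/2}\rtimes\xi_1$ with $\xi_1\le\Theta_{-(l-2)}(\pi)$. By the two preceding propositions (applied at the smaller index $l-2$), $\Theta_{-(l-2)}(\pi)$ has no tempered subquotients, so $\xi_1$ is again non-tempered and the same step repeats: $\xi_1\hookrightarrow\chi_W\nu^{-(l-3)/2}\rtimes\xi_2$ with $\xi_2\le\Theta_{-(l-4)}(\pi)$, and so on. The chain terminates in an embedding of $\xi_0$ into the standard module $\chi_W\nu^{-(l-1)/2}\times\cdots\times\chi_W\nu^{-1}\rtimes\theta^{down}_{-1}(\pi)$ (or its going-up analogue), which is socle-irreducible by \cite{AG_tempered}; hence $\xi_0=\theta_{-l}(\pi)$. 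You are not using Mui\'c's Theorem~4.1 at all for this recursive reduction (you only cite his Corollary~3.2 for the Kudla filtration), and that is the tool that makes the elegant argument go through. Your route, if it can be made to work, would require redoing the whole sign-alternation casework with an arbitrary Langlands quotient in place of a discrete series, and you never say concretely why all the non-tempered candidates other than $\theta_{-l}(\pi)$ are excluded -- you just assert that the derivative analysis ``should'' rule them out.

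Separately, your multiplicity-one argument is wrong, although it is outside the literal statement of this Proposition (the paper proves multiplicity one in a subsequent lemma by a Jacquet-module multiplicity count). Howe duality gives that $\theta_{-l}(\pi)$ is the unique irreducible \emph{quotient} of $\Theta_{-l}(\pi)$; it does not say $\theta_{-l}(\pi)$ has multiplicity one as a \emph{composition factor}. The kernel $K$ of $\Theta_{-l}(\pi)\twoheadrightarrow\theta_{-l}(\pi)$ can contain $\theta_{-l}(\pi)$ as an inner subquotient (e.g.\ in its socle) without $\theta_{-l}(\pi)$ being a quotient of $K$, so no contradiction with Howe duality arises. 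That is precisely why the paper has to compute the multiplicity of $\chi_W\nu^{-(l-1)/2}\otimes\theta_{-(l-2)}(\pi)$ in a suitable Jacquet module rather than appeal to Howe duality alone.
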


\begin{proof} By Theorem 4.1 of \cite{Muic_Israel}, if $\xi_0$ is a  non-tempered  subquotient of $\Theta_{-l}(\pi),$ then
\[\xi_0\hookrightarrow \chi_W\nu^{-\frac{l-1}{2}}\rtimes \xi_1,\]
where $\xi_1\le \Theta_{-(l-2)}(\pi).$ We have proved that $\Theta_{-(l-2)}(\pi)$ does not have tempered subquotients, thus
\[\xi_1\hookrightarrow \chi_W\nu^{-\frac{l-3}{2}}\rtimes \xi_2,\]
where $\xi_2\le \Theta_{-(l-4)}(\pi),$ etc. To conclude, we have
\[\xi_0\hookrightarrow \chi_W\nu^{-\frac{l-1}{2}}\times \chi_W\nu^{-\frac{l-3}{2}}\times \cdots \times \chi_W\nu^{-1}\rtimes \theta_{-1}^{down}(\pi)\]
or
\[\xi_0\hookrightarrow \chi_W\nu^{-\frac{l-1}{2}}\times \chi_W\nu^{-\frac{l-3}{2}}\times \cdots \times \chi_W\nu^{-\frac{l(\pi)+3}{2}}\rtimes \theta_{-(l(\pi)+2)}^{up}(\pi).\]
The representation on the right-hand side of the inclusions are socle--irreducible so that
\[\xi_0=\theta_{-l}(\pi),\]
cf.~\cite{AG_tempered}.
\end{proof}

The only thing which remained to be proved is that $\theta_{-l}(\pi)$ (which is, as we concluded above, the only irreducible subquotient of  $\Theta_{-l}(\pi)$)
appears with the multiplicity one in  $\Theta_{-l}(\pi).$ We were unable to find the exact reference for this claim, so we give (an easy) proof.

\begin{lem} We retain the notation of the previous propositions i.e. $\pi$ is a square-integrable representation. Then,  $\theta_{-l}(\pi)$ appears with the multiplicity one in $\Theta_{-l}(\pi).$ 
\end{lem}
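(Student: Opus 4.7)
The plan is to argue by induction on the rank $n$ of $G(W_n)$. The base case is $\pi$ supercuspidal, where Kudla's theorem from \cite{Kudla_local_theta} asserts that $\Theta_{-l}(\pi)$ is irreducible, so the multiplicity one claim follows. For the inductive step, the three preceding propositions force every irreducible subquotient of $\Theta_{-l}(\pi)$ to be isomorphic to $\theta_{-l}(\pi)$, while Howe duality provides the canonical surjection $\Theta_{-l}(\pi)\twoheadrightarrow\theta_{-l}(\pi)$, which is moreover the unique (up to scalar) such surjection. Writing $m$ for the multiplicity of $\theta_{-l}(\pi)$ as a subquotient of $\Theta_{-l}(\pi)$, it therefore suffices to show $m\le 1$.

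To bound $m$ from above I fix an embedding $\pi\hookrightarrow\chi_V\rho\nu^{(\alpha-1)/2}\rtimes\pi'$ with $(\chi_V\rho,\alpha)\in\Jord(\pi)$ and $\pi'$ tempered, and split into the same two cases used in Propositions~\ref{square} and the next one. In the non-exceptional case, Lemma~\ref{AG-epimorphism} yields
\[
\chi_W\rho\nu^{-(\alpha-1)/2}\rtimes\Theta_{-l}(\pi')\twoheadrightarrow\Theta_{-l}(\pi).
\]
The inductive hypothesis reduces $\Theta_{-l}(\pi')$ to $\theta_{-l}(\pi')$; if $\pi'$ is tempered but not square-integrable, I first embed it as in case~1 of Proposition~\ref{square} and conclude irreducibility of $\Theta_{-l}(\pi')$ from the unitarizability of the resulting lift together with Howe duality. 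Thus $m$ is at most the multiplicity of $\theta_{-l}(\pi)$ in $I:=\chi_W\rho\nu^{-(\alpha-1)/2}\rtimes\theta_{-l}(\pi')$, which I would read off from Tadić's formula \eqref{eq_tadic_classical}: by Proposition~8.3 of \cite{Xu_M_parameterization} (used repeatedly in the preceding proofs) the derivative $Jac_{\chi_W\rho\nu^{(\alpha-1)/2}}(\theta_{-l}(\pi))$ is irreducible and equal to $\theta_{-l}(\pi')$, so the contribution of $\theta_{-l}(\pi)$ to $\mu^*(I)$ is singled out exactly once, giving $m\le 1$.

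In the exceptional case (every admissible embedding of $\pi$ has $\rho=1_{GL_1}$ and $\alpha=l+2$) Lemma~\ref{AG-epimorphism} is unavailable, and I would replace it by the two-step filtration $0\subset\Theta_0\subset\Theta(\chi_V\nu^{(l+1)/2}\otimes\pi',R_{P_1}(\omega_{n,m}))$ of Corollary~3.2 of \cite{Muic_Israel}, exactly as in case~2 of Proposition~\ref{square}. The inductive hypothesis identifies both $\Theta_{-l}(\pi')$ and $\Theta_{-(l+2)}(\pi')$ with their small theta lifts, each of length one, and an analogous derivative/$\mu^*$-analysis bounds the contribution of $\theta_{-l}(\pi)$ to each successive quotient of the filtration, and therefore to $\Theta_{-l}(\pi)$, by one.

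The main obstacle I anticipate is the multiplicity computation in $I$ (and across the Muić filtration in the exceptional case). All the requisite A-packet and derivative machinery — in particular the irreducibility of the first derivative of $\theta_{-l}(\pi)$ with respect to $\chi_W\rho\nu^{(\alpha-1)/2}$ — is already in place from the preceding propositions, but the bookkeeping must be matched precisely to the case division by $\Jord(\pi)$ and the sign function $\epsilon_\pi$ already carried out there, so the difficulty is organisational rather than conceptual.
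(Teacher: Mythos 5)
Your overall strategy—track the multiplicity through an epimorphism $\chi_W\rho\nu^{-(\alpha-1)/2}\rtimes\theta_{-l}(\pi')\twoheadrightarrow\Theta_{-l}(\pi)$ and count using $\mu^*$—is the same as the paper's, but the specific invariant you choose to count is not the right one, and this is where the gap lies. You propose to bound the multiplicity by the multiplicity of the derivative $Jac_{\chi_W\rho\nu^{(\alpha-1)/2}}(\theta_{-l}(\pi))$ in $\mu^*(I)$, and you simply assert that this derivative is irreducible and equal to $\theta_{-l}(\pi')$ via Proposition 8.3 of \cite{Xu_M_parameterization}. But $\theta_{-l}(\pi)$ is a non-tempered Langlands quotient of the form $L(\chi_W\nu^{\frac{l-1}{2}},\ldots;\ \theta_{-1}(\pi))$ or $L(\chi_W\nu^{\frac{l-1}{2}},\ldots;\ \theta_{-l(\pi)-2}^{up}(\pi))$, and the Xu result (which concerns derivatives of representations in an Arthur packet in positions prescribed by the Arthur parameter) doesn't immediately hand you irreducibility of the $\chi_W\rho\nu^{(\alpha-1)/2}$-derivative of such a Langlands quotient, nor even its nonvanishing; the factor $\chi_W\rho\nu^{(\alpha-1)/2}$ was chosen to expose the Jordan structure of $\pi$, not to fit the Langlands quotient form of $\theta_{-l}(\pi)$. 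The paper's key move, which you miss, is to count the multiplicity of the factor $\chi_W\nu^{-\frac{l-1}{2}}\otimes\theta_{-(l-2)}(\pi)$ in $\mu^*$ of the covering representation: this is the factor singled out by the embedding $\theta_{-l}(\pi)\hookrightarrow\chi_W\nu^{-\frac{l-1}{2}}\rtimes\theta_{-(l-2)}(\pi)$ coming directly from the Langlands quotient form, so it is automatically produced (at least) once per copy of $\theta_{-l}(\pi)$, and its multiplicity in the covering representation is computable to be exactly one.

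There are two further issues. First, your choice of embedding $\pi\hookrightarrow\chi_V\rho\nu^{(\alpha-1)/2}\rtimes\pi'$ leaves $\pi'$ merely tempered, and you patch this by redoing the argument of Proposition~\ref{square}; the paper sidesteps the problem by a different case division: when $\epsilon_\pi$ fails to alternate on some $\Jord_\rho(\pi)$, it embeds $\pi$ via a segment $\chi_V\delta[\rho\nu^{-l_1},\rho\nu^{l_2}]\rtimes\pi_0$ with $\pi_0$ square-integrable, so the induction hypothesis applies directly; when $\pi$ is strongly positive, the single-step Jacquet modules already land on square-integrable $\pi'$. Second, in the sub-case $\rho=1$, $\beta=l$, the argument requires $\theta_{-(l-2)}(\pi)\neq\theta_{-l}(\pi')$, which can fail and then needs a separate treatment (two degenerate scenarios where both are tempered, handled in the paper by showing irreducibility of $\chi_W\nu^{-\frac{l-1}{2}}\rtimes\theta_{-l}(\pi')$ outright). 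Your proposal does not flag this case at all.
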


\begin{proof} Assume that there exist two consecutive elements in $\Jord_{\rho}(\pi),$ say $2l_1+1$ and $2l_2+1$  such that $\epsilon_{\pi}(\rho\otimes S_{2l_1+1})=\epsilon_{\pi}(\rho\otimes S_{2l_2+1})$ and let $\pi_0$ be a square integrable representation such that
\[\pi\hookrightarrow \chi_V\delta[\rho\nu^{-l_1},\rho\nu^{l_2}]\rtimes \pi_0.\]
By Lemma \ref{AG-epimorphism} and the induction hypothesis we have 
\[\widetilde{\Theta_{-l}(\pi)}^{\eta}\hookrightarrow \chi_W\delta[\rho\nu^{-l_1},\rho\nu^{l_2}]\rtimes \theta_{-l}(\pi_0).\]
We calculate the multiplicity of $\chi_W\nu^{-\frac{l-1}{2}}\otimes \theta_{-(l-2)}(\pi) \in \mu^*(\chi_W\delta[\rho\nu^{-l_1},\rho\nu^{l_2}]\rtimes \theta_{-l}(\pi_0)).$ Using \eqref{eq_tadic_classical},\eqref{eq_tadic_classical2}, it turns out that the multiplicity is one; this, in turn, means that the multiplicity of $\theta_{-l}(\pi)$ in $\Theta_{-l}(\pi)$ is one.

Assume now that, for any $\rho,$ the function $\epsilon_{\pi}$ alternates on $\Jord_{\chi_V\rho}(\pi)$ ;
in the parlance of \cite{MT} and \cite{Muic_positive}, this means that $\pi$ is a strongly positive discrete series. Assume that for some  positive integer $\beta$ (if $\rho\neq 1$)  or $\beta\neq l,l+2$ if $\rho=1$ we have $\pi':=Jac_{\chi_V\rho\nu^{\frac{\beta-1}{2}}}(\pi)\neq 0.$ Note that $\pi'$ is necessarily square integrable.
Then we have
\[\widetilde{\Theta_{-l}(\pi)}^{\eta}\hookrightarrow \chi_W\rho\nu^{\frac{\beta-1}{2}}\rtimes \theta_{-l}(\pi'),\]
and we easily get that the multiplicity  of $\chi_W\nu^{-\frac{l-1}{2}}\otimes \theta_{-(l-2)}(\pi)$ in $\mu^*(\chi_W\rho \nu^{\frac{\beta-1}{2}}\rtimes \theta_{-l}(\pi'))$ is one, since the multiplicity of $\chi_W\nu^{-\frac{l-1}{2}}\otimes \theta_{-(l-2)}(\pi')$ in $\mu^*(\theta_{-l}(\pi'))$ equals one.

\smallskip
Assume now that $\beta=l+2$ and $\rho=1.$ Let $\omega$ be the relevant Weil representation (restriction of which gives us $\pi\otimes \Theta_{-l}(\pi)$). 
We have
\[\omega\twoheadrightarrow \pi\otimes  \Theta_{-l}(\pi),\]
so that we have
\[R_{P_1}(\omega)\twoheadrightarrow \chi_V\nu^{\frac{l+1}{2}}\otimes \pi'\otimes  \Theta_{-l}(\pi),\]
thus
\[\Theta(\chi_V\nu^{\frac{l+1}{2}}\otimes \pi',R_{P_1}(\omega))\twoheadrightarrow\Theta_{-l}(\pi).\]
Now we carefully examine the filtration of $\Theta_0:=\Theta(\chi_V\nu^{\frac{l+1}{2}}\otimes \pi',R_{P_1}(\omega))$ given in Theorem 3.1 of \cite{Muic_Israel} and incorporate the induction assumption (for $\pi'$).
We have 
\[\{0\}\subset \Theta_1\subset \Theta_0,\]
where $\chi_W\nu^{-\frac{l+1}{2}}\rtimes \theta_{-l}(\pi')\twoheadrightarrow \Theta_1$
and $\theta_{-l-2}(\pi')\twoheadrightarrow \Theta_0/\Theta_1.$ Since $\theta_{-l-2}(\pi')\neq \theta_{-l}(\pi)$ (our assumption is, from the beginning, that $\theta_{-l}(\pi)$ is non-tempered), we get
\[\Theta_{-l}(\pi)\le \chi_W\nu^{-\frac{l+1}{2}}\rtimes \theta_{-l}(\pi').\]
We easily get that the multiplicity of $\chi_W\nu^{-\frac{l-1}{2}}\otimes \theta_{-(l-2)}(\pi)$ in $\mu^*(\chi_W\nu^{-\frac{l+1}{2}}\rtimes \theta_{-l}(\pi'))$ is equal to one, since the multiplicity of $\chi_W\nu^{-\frac{l-1}{2}}\otimes \theta_{-(l-2)}(\pi')$ in $\mu^*(\theta_{-l}(\pi'))$ equals  one.

\smallskip
Now we just have to check the situation $\beta=l$ and $\rho=1.$ Lemma \ref{AG-epimorphism} gives
\begin{equation}
\label{last_epi}
\chi_W\nu^{-\frac{l-1}{2}}\rtimes \theta_{-l}(\pi')\twoheadrightarrow \Theta_{-l}(\pi).
\end{equation}
On the other hand, we have 
\[\theta_{-l}(\pi)\hookrightarrow \chi_W\nu^{-\frac{l-1}{2}}\rtimes \theta_{-(l-2)}(\pi)\hookrightarrow \chi_W\nu^{-\frac{l-1}{2}}\times \chi_W\nu^{\frac{l-1}{2}}\rtimes \theta_{-(l-2)}(\pi'). \]
Here the second embedding $ \theta_{-(l-2)}(\pi)\hookrightarrow \chi_W\nu^{\frac{l-1}{2}}\rtimes \theta_{-(l-2)}(\pi')$ is  the previous case, just shifted-we have used  here that $\theta_{-(l-2)}(\pi)\neq \theta_{-l}(\pi')$ (we are going to comment on that later).
Now we just calculate the multiplicity of 
\[\chi_W\nu^{-\frac{l-1}{2}}\times \chi_W\nu^{\frac{l-1}{2}}\otimes  \theta_{-(l-2)}(\pi')\]
in $\mu^*(\chi_W\nu^{-\frac{l-1}{2}}\rtimes \theta_{-l}(\pi')).$ We get that it equals one, as needed.

Now we comment on the relation between the lifts $\theta_{-(l-2)}(\pi)$ and  $\theta_{-l}(\pi').$ If both of these lifts are non-tempered, they are obviously non-equal and each step of the above reasoning is  valid. So we analyze when both of them are tempered.
Thus, we have two cases
\begin{itemize}
\item[(i)] $l(\pi)=-1.$\\ 

In this case, in order for 	 $\theta_{-(l-2)}(\pi)$ to be tempered, we have $l-2=1,$ and $\beta=l=3.$ Then, $l(\pi')=1,$ and, on exactly one tower, we have $\theta_{-1}(\pi)=\theta_{-3}^{up}(\pi').$ Note that $\epsilon_{\theta_{-3}^{up}(\pi')}(\chi_W\otimes S_1)=-\epsilon_{\theta_{-3}^{up}(\pi')}(\chi_W\otimes S_3).$
Then, we can conclude that $\Theta_{-l}(\pi)$ is irreducible just from \eqref{last_epi}; namely, the representation
\[\chi_W\nu^{-\frac{l-1}{2}}\rtimes \theta_{-l}(\pi')\]
is irreducible by Section 5 of \cite{MT}.
\item[(ii)] $l(\pi)\ge 1.$\\ 
 In case that we examine $\theta_{-l}^{down}(\pi),$ then $\theta_{-(l-2)}^{down}(\pi)$ is tempered only if $l=3,$ but then (recall that $\beta=l=3$) then $Jac_{\chi_V\nu^1}(\pi)=0,$ a contradiction (recall that $\pi$ is strongly positive).
 Thus, we must have $\theta_{-(l-2)}^{up}(\pi)$ is tempered, so $l=l(\pi)+4.$
 Note that the fact that $\pi$ is strongly positive means that $l(\pi)+2\notin\Jord_{\chi_V}(\pi).$ We easily get that $\theta_{-(l(\pi)+2)}^{up}(\pi)=\theta_{-(l(\pi)+4)}^{up}(\pi').$ Again, then the representation $\chi_W\nu^{-\frac{l-1}{2}}\rtimes \theta_{-l}(\pi')$ is irreducible, so is $\Theta_{-l}^{up}(\pi),$ as claimed.
\end{itemize}
\end{proof}
We have proved Theorem \ref{going_down_tower}.

\bigskip

Now we can pick some low--hanging fruit; actually, we have used this fact several times in the proof above, but we note it here again.

\begin{cor}
Let $l\ge 1$ be an odd  positive integer and $\pi$ a tempered representation. Then, if $\Theta_{-l}(\pi)$ is non-zero, then it is an irreducible representation.
\end{cor}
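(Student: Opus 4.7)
The plan is to proceed by induction on the rank of $G_n$, reducing the tempered case to the discrete series case already handled by Theorem \ref{going_down_tower}. The base case is Kudla's theorem on supercuspidal representations mentioned in the introduction. For the inductive step, if $\pi$ is itself a discrete series representation, the corollary is immediate from Theorem \ref{going_down_tower} combined with the already-known results $\Theta_{-1}^{down}(\pi)=\theta_{-1}^{down}(\pi)$ and $\Theta_{-(l(\pi)+2)}^{up}(\pi)=\theta_{-(l(\pi)+2)}^{up}(\pi)$ of Mui\'c, which together cover every non-zero lift of a discrete series.

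If $\pi$ is tempered but not square-integrable, I would use the tempered classification to embed
\[\pi \hookrightarrow \chi_V\delta \rtimes \sigma,\]
where $\delta = \delta[\rho\nu^{-(a-1)/2},\rho\nu^{(a-1)/2}]$ is a unitary essentially square-integrable representation of some $\GL(k,F)$ and $\sigma$ is an irreducible tempered representation of a smaller classical group. By the inductive hypothesis, $\Theta_{-l}(\sigma)$ is irreducible --- hence equal to the unitarizable representation $\theta_{-l}(\sigma)$ --- whenever it is non-zero. Provided that $\delta \ncong \mathrm{St}_l$ (which is the only unitary instance of the excluded segment in Lemma \ref{AG-epimorphism}, since for a unitary segment the center $(l-k)/2$ must be $0$), that lemma yields
\[\chi_W\delta \rtimes \theta_{-l}(\sigma) \twoheadrightarrow \Theta_{-l}(\pi).\]
The left-hand side is parabolically induced from unitary data, hence completely reducible, so $\Theta_{-l}(\pi)$ is a semisimple quotient of a semisimple module. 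Howe duality forces it to have a unique irreducible quotient, and so $\Theta_{-l}(\pi)$ must be irreducible. Since in the tempered induced decomposition $\pi \hookrightarrow \delta_1\times\cdots\times\delta_r\rtimes\pi_0$ the factors can be reordered freely (the induced representation being semisimple), whenever any $\delta_i$ is different from $\mathrm{St}_l$, we may place it first and conclude.

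The main obstacle is the degenerate case where every GL-segment in a tempered embedding of $\pi$ is $\chi_V\mathrm{St}_l$, i.e.\ $\pi\hookrightarrow (\chi_V\mathrm{St}_l)^r\rtimes\pi_0$ for some discrete series $\pi_0$ and $r\ge 1$. Here Lemma \ref{AG-epimorphism} cannot be invoked, and one must substitute Kudla's filtration (Corollary 3.2 of \cite{Muic_Israel}), exactly in the spirit of the treatment of the obstruction $\rho=1_{\GL_1}$, $\beta=l+2$ in case 2 of Proposition \ref{square}. The filtration presents $\Theta_{-l}(\pi)$ as an extension whose graded pieces are controlled by the inductively understood $\Theta_{-l}(\sigma)$ and $\Theta_{-(l+2)}(\sigma)$ for a suitable tempered $\sigma$; a short derivative calculation together with Howe duality then pins down the irreducibility, closing the induction.
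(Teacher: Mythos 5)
Your overall strategy is the one the paper uses: write $\pi$ as a quotient of a representation parabolically induced from unitary discrete-series data, push that surjection through Lemma \ref{AG-epimorphism}, and read off the irreducibility of $\Theta_{-l}(\pi)$ from the semisimplicity of the resulting induced representation together with Howe duality (uniqueness of irreducible quotient). That part is correct and coincides with the paper's argument; the paper applies the lemma directly to the full tempered surjection
\[
\chi_V\delta_1\times\cdots\times\chi_V\delta_k\rtimes\sigma\twoheadrightarrow\pi,\qquad\sigma\ \text{discrete series,}
\]
invoking Theorem \ref{going_down_tower} for $\theta_{-l}(\sigma)$, rather than setting up a separate induction on rank with a tempered $\sigma$; either route is fine.

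However, the ``main obstacle'' you set up --- the case where every GL-factor is $\chi_V\mathrm{St}_l$ --- is a phantom, and it comes from a sign error. In Lemma \ref{AG-epimorphism} the parameter $l$ is literally $n-m+\epsilon$, the subscript of $\Theta_l$; in the Corollary we are computing $\Theta_{-l}(\pi)$ with $l\ge 1$, so the lemma's parameter is $-l$, which is \emph{negative}. The excluded segment is therefore
\[
\mathrm{St}_k\,\nu^{\frac{(-l)-k}{2}}=\mathrm{St}_k\,\nu^{-\frac{l+k}{2}},
\]
whose center $-\tfrac{l+k}{2}$ is strictly negative for every $k\ge1$. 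A unitary discrete-series segment in the tempered decomposition always has center $0$, so the exclusion hypothesis of Lemma \ref{AG-epimorphism} is automatically satisfied whenever we lift to the larger group ($l\ge1$). There is no degenerate case; you never need Kudla's filtration here, and the reordering discussion is unnecessary. (The $\mathrm{St}_l$ obstruction is a genuine phenomenon only when one lifts \emph{downward}, i.e.\ for $\Theta_{l}$ with $l>0$, which is exactly why the remark at the end of the paper flags that case as subtler and postpones it.) Dropping the spurious case leaves your argument essentially identical to the paper's and correct.
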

 \begin{proof}
 Let $l_1,\ldots,l_k$ be positive integers, and $\rho_1,\ldots,\rho_k$ unitarizable supercuspidal representations of $\GL(n_1,F),\ldots, \GL(n_k,F)$ and an irreducible  square--integrable representation $\sigma$ such that
 \[\chi_V\delta[\rho_1\nu^{-\frac{l_1-1}{2}},\rho_1\nu^{\frac{l_1-1}{2}}]\times \cdots \times \chi_V\delta[\rho_k\nu^{-\frac{l_k-1}{2}},\rho_k\nu^{\frac{l_k-1}{2}}]\rtimes \sigma \twoheadrightarrow \pi.\]
 Now we can use Lemma \ref{AG-epimorphism} and Theorem \ref{going_down_tower} to obtain
 \[\chi_W\delta[\rho_1\nu^{-\frac{l_1-1}{2}},\rho_1\nu^{\frac{l_1-1}{2}}]\times \cdots \times \chi_W\delta[\rho_k\nu^{-\frac{l_k-1}{2}},\rho_k\nu^{\frac{l_k-1}{2}}]\rtimes \theta_{-l}(\sigma) \twoheadrightarrow \Theta_{-l}(\pi).\]
 Since the left-hand side is semisimple, so is $\Theta_{-l}(\pi),$ and by Howe duality conjecture, $\Theta_{-l}(\pi)=\theta_{-l}(\pi),$ as claimed. 
 \end{proof}

 We now address the question of reducibility of $\Theta_{l}(T),$ where $l>0$ and $T$ tempered.

 \begin{prop}
 \label{Big_theta_temp_low}
 Let $T$ be an irreducible tempered representation of $G_n$ and let $l>0$ be such that $\Theta_{l}(T)\neq 0.$ Then, all the irreducible subquotients of $\Theta_{l}(T)$ are tempered.
 \end{prop}

 \begin{proof}
 Assume the contrary, i.e.~there exists a non-tempered subquotient $\xi\le \Theta_{l}(T);$  then, there exists a supercuspidal representation $\rho,$  real numbers $\alpha, \beta$ and irreducible representation $\xi'$ such that
 \[\xi\hookrightarrow \chi_W\delta[\rho\nu^{-\alpha},\rho\nu^{\beta}]\rtimes \xi'\]
 with $\alpha +\beta\in\Z_{\ge 0}$ and $-\alpha+\beta <0.$
 Using the projectivity of cuspidal representations together with the transitivity of Jacquet modules, we get that there exists an irreducible representation $\tau$ such that
 \[\Hom_{H_m}(\Theta_l(T),\chi_W\rho\nu^{\beta}\times \chi_W\rho\nu^{\beta-1}\times \cdots \times \chi_W\rho\nu^{-\alpha}\rtimes \tau)\neq 0.\]
 From this, again by using the procedure of Muić (\cite{Muic_Israel}, Theorem 4.1), we get that there exists $\gamma$ such that $-\alpha\le \gamma\le \beta$ and an irreducible representation $\tau_2$ such that
 \[Hom_{H_m}(\Theta_l(T),\chi_W\delta[\rho\nu^{-\alpha},\rho\nu^{\gamma}]\rtimes \tau_2)\neq 0.\]
 We now analyze $\Theta(\chi_W\delta[\rho\nu^{-\alpha},\rho\nu^{\gamma}]\otimes \tau_2,R_{Q_k}(\omega_{m,n}));$ note that 
 \[\Theta(\chi_W\delta[\rho\nu^{-\alpha},\rho\nu^{\gamma}]\otimes \tau_2,R_{Q_k}(\omega_{m,n}))\twoheadrightarrow T.\]
  We use Corollary 3.2 of \cite{Muic_Israel}; note that the roles of $m$ and $n$ here are inverted from the situation  in that Corollary (besides being the dimensions here and split ranks there). Since $\alpha>0$ and $l>0$ we get that the exceptional situation of that Corollary cannot occur, so that we get
 \[T\hookrightarrow\chi_V\delta[\rho\nu^{-\alpha},\rho\nu^{\gamma}]\rtimes \widetilde{\Theta_{-l}(\tau_2)}^{\eta},\]
 a contradiction with the temperedness of $T.$
 \end{proof}

\begin{prop}
\label{temp_odd}
Assume that  $T$ is an irreducible tempered representation of $G_n$ and $l>0$ such that $\Theta_{l}(T)\neq 0.$ Note that the multiplicity of $\chi_V\otimes S_{l}$ in $\phi_T$ is then always positive. If the multiplicity of $\chi_V\otimes S_{l}$ in $\phi_T$ equals one, $\Theta_{l}(T)=\theta_l(T).$ On the other hand, if this multiplicity  is odd, but greater than one, $\Theta_{l}(T)$ is reducible. 
\end{prop}

\begin{proof}
First assume that the multiplicity of $\chi_V\otimes S_{l}$ in $\phi_T$ equals one.
Let $l_1,\ldots,l_k$ be positive integers, and $\rho_1,\ldots,\rho_k$ unitarizable supercuspidal representations of $\GL(n_1,F),\ldots, \GL(n_k,F)$ and an irreducible  square--integrable representation $\sigma$ such that
 \[\chi_V\delta[\rho_1\nu^{-\frac{l_1-1}{2}},\rho_1\nu^{\frac{l_1-1}{2}}]\times \cdots \times \chi_V\delta[\rho_k\nu^{-\frac{l_k-1}{2}},\rho_k\nu^{\frac{l_k-1}{2}}]\rtimes \sigma \twoheadrightarrow T.\]
 Since the multiplicity is one, we have $(l,\chi_V)\notin \{(l_1,\chi_V\rho_1),\ldots,(l_k,\chi_V\rho_k)\}$ and we can apply Lemma \ref{AG-epimorphism} to get 
 \[\chi_W\delta[\rho_1\nu^{-\frac{l_1-1}{2}},\rho_1\nu^{\frac{l_1-1}{2}}]\times \cdots \times \chi_W\delta[\rho_k\nu^{-\frac{l_k-1}{2}},\rho_k\nu^{\frac{l_k-1}{2}}]\rtimes \Theta_{l}(\sigma) \twoheadrightarrow \Theta_l(T).\]
 Since $\Theta_{l}(\sigma)=\theta_{l}(\sigma)$ by Theorem \ref{going_down_tower}, the right-hand side is semisimple and the claim follows.

 \smallskip
Assume now that the multiplicity in question equals $2h+1,\;h\ge 1.$
Define a representation $\pi=L(\chi_V\delta[\nu^{-\frac{l-1}{2}},\nu^{\frac{l+1}{2}}];T).$ We know by \cite{BH_theta} that $l(\pi)=l(T)$ if $l<l(T)$ or $l(\pi)=l(T)+2$ if $l=l(T).$ Since we have $\chi_V\delta[\nu^{-\frac{l-1}{2}},\nu^{\frac{l+1}{2}}]\rtimes T\twoheadrightarrow\pi,$ by the second part of \ref{AG-epimorphism} with respect  to $\Theta_{l+2}$ we get that either
\[\chi_W\delta[\nu^{-\frac{l-1}{2}},\nu^{\frac{l-1}{2}}]\rtimes \Theta_{l}(T)\twoheadrightarrow \theta_{l+2}(\pi)\]
or 
\[\chi_W\delta[\nu^{-\frac{l-1}{2}},\nu^{\frac{l+1}{2}}]\rtimes \Theta_{l+2}(T)\twoheadrightarrow \theta_{l+2}(\pi).\]
We know that $ \theta_{l+2}(\pi)$ is tempered (cf.~Lemma 5.5 and Theorem 6.1 of \cite{BH_theta}).
The second possibility cannot occur-indeed, if $l=l(T)$, then $\Theta_{l+2}(T)=0,$ and if $l<l(T)$ let $\xi\le \Theta_{l+2}(T)$ be an irreducible subquotient which contributes in the epimorphism just above. By Proposition \ref{Big_theta_temp_low}, $\xi$ is tempered. Then, $\chi_W\delta[\nu^{-\frac{l-1}{2}},\nu^{\frac{l+1}{2}}]\rtimes \xi$ is a standard representation which cannot have a tempered quotient $ \theta_{l+2}(\pi)$.

Thus, we necessarily have 
\begin{equation}
\label{embedding_wrong}
\theta_{l+2}(\pi)\hookrightarrow \chi_W\delta[\nu^{-\frac{l-1}{2}},\nu^{\frac{l-1}{2}}]\rtimes \widetilde{\Theta_{l}(T)}^{\eta}.
\end{equation}
 Assume that, in this embedding, $\theta_{l}(T)$ contributes. Note that the representation 
 \[\chi_W\delta[\nu^{-\frac{l-1}{2}},\nu^{\frac{l-1}{2}}]\rtimes \theta_{l}(T)\] is irreducible, the multiplicity of $\chi_W\otimes S_l$ here equals $2h+2$ and the character  satisfies $\epsilon_{\theta_{l}(T)}(\chi_W\otimes S_l)=\epsilon_{T}(\chi_V\otimes S_l).$
 On the other hand, by the arguments in Lemma 5.5 of \cite{BH_theta}, also cf.~Theorem 4.5. (2) of \cite{AG_tempered}, we know that $\theta_{l+2}(\pi)$ is tempered representation with the character on $\chi_W\otimes S_l$  opposite to $\epsilon_{T}(\chi_V\otimes S_l).$ This means that in the \eqref{embedding_wrong},  $\theta_{l}(T)$ does not contribute, i.e. $\Theta_{l}(T)$ is not irreducible.
\end{proof}

Now we discuss the last possibility for the multiplicities of $\chi_V\otimes S_l$ in $\phi_T.$
\begin{prop}
\label{temp_odd}
Assume that  $T$ is an irreducible tempered representation of $G_n$ and $l>0$ such that $\Theta_{l}(T)\neq 0.$  If the multiplicity of $\chi_V\otimes S_{l}$ in $\phi_T$ is even, $\Theta_{l}(T)=\theta_l(T);$ i.e.~$\Theta_{l}(T)$ is irreducible. 
\end{prop}

\begin{proof}Note that, in the situation from the statement of this Proposition, we necessarily have $l=l(T)$ (cf.~\cite{AG_tempered}, Theorem 4.1.(1)).
Thus, we have
\[\phi_T\ldots m_1\chi_V\otimes S_1\oplus m_3\chi_V\otimes S_3\oplus \cdots\oplus m_{l(T)-2}\chi_V\otimes S_{l(T)-2}\oplus 2h\chi_V\otimes S_{l(T)}\oplus\cdots;\]
here the multiplicities $m_1,m_3,\ldots,m_{l(T)-2}$ are odd.

Recall that $T$ can be embedded in the unitarizable induced representation
\[T\hookrightarrow \chi_V\delta_1^{n_1}\times \cdots \times \chi_V\delta_k^{n_k}\rtimes \sigma,\]
where $\delta_1,\ldots,\delta_k$ are  mutually non-isomorphic irreducible discrete series representations, $\delta_i^{n_i}$ denotes the induced representation $\delta_i\times \delta_i\times \cdots\times \delta_i$ ($n_i$ times). Here $\sigma$ is ``almost'' discrete series, i.e.~it is a tempered representation with the multiplicities of summands in $\phi_{\sigma}$  always equal to one, except we have $2h \chi_V\otimes S_{l(T)}$ appearing. Note that $l(\sigma)=l(T).$

First we prove the statement of the Proposition for the representation $\sigma.$
We introduce the auxiliary representations $\sigma_i,\;i=1,2,\ldots h$ which have the same L-parameter and $\epsilon$ function except $\sigma_i$ has $2i\chi_V\otimes S_{l(T)}$ appearing in the L-parameter with $\epsilon_{\sigma_i}(2i\chi_V\otimes S_{l(T)})=\epsilon_{T}(2h\chi_V\otimes S_{l(T)})$ so that our  $\sigma$ is $\sigma_{h}.$
We denote by $\sigma_0$ a discrete series representation such that
\[\sigma_i\hookrightarrow \chi_V\delta[\nu^{-\frac{l(T)-1}{2}},\nu^{\frac{l(T)-1}{2}}]^i\rtimes \sigma_0.\]
To prove the statement for $\sigma=\sigma_h,$ we proceed inductively and   start with the basic case i.e., prove that $\Theta_{l(T)}(\sigma_1)=\theta_{l(T)}(\sigma_1).$
Using filtration of the Corollary 3.2 of \cite{Muic_Israel} with the fact that $l(\sigma_0)=l(T)-2,$
 we get 
 \[\chi_W\delta[\nu^{-\frac{l(T)-1}{2}}, \nu^{\frac{l(T)-3}{2}}]\rtimes \Theta_{l(T)-2}(\sigma_0)\twoheadrightarrow \Theta_{l(T)}(\sigma_1).\]
 Since $\sigma_0$ is square-integrable, $\Theta_{l(T)-2}(\sigma_0)=\theta_{l(T)-2}(\sigma_0).$ Note that the representation on the left hand side is of length three; the unique irreducible subrepresentation is the corresponding Langlands quotient. Note that it has to be in the kernel of the above epimorphism, since $\Theta_{l(T)}(\sigma_1)$ has only tempered subquotients. Thus, we have an epimorphism from the cosocle of
 $\chi_W\delta[\nu^{-\frac{l(T)-1}{2}}, \nu^{\frac{l(T)-3}{2}}]\rtimes \Theta_{l(T)-2}(\sigma_0)$, which is a direct sum of two square-integrable representations, onto 
$\Theta_{l(T)}(\sigma_1).$ Thus, $\Theta_{l(T)}(\sigma_1)=\theta_{l(T)}(\sigma_1)$
(actually, in this case we could  also argue that $\theta_{l(T)}(\sigma_1)$ is square-integrable, so all the subquotients of $\Theta_{l(T)}(\sigma_1)$  are also square-integrable, and the projectivity of square-integrable representations in the appropriate category gives the same result, but we use the previous argument also later).

Now we proceed inductively; the case of $\sigma_2$ is  still little bit less technical but has all the steps as the general induction step, so to explain the procedure we  go through the case of $\sigma_2$ in detail.\\ 

Steps:\\
\noindent 1. The representation 
\[\chi_V\delta[\nu^{-\frac{l(T)-3}{2}},\nu^{\frac{l(T)-1}{2}}]\rtimes \sigma_1\]
is irreducible. This follows from \cite{Muic_standard}, Introduction-the third elementary case, also cf.~Lemma 6.4 there. We have
\begin{align*}
&\sigma_2=\chi_V\delta[\nu^{-\frac{l(T)-1}{2}},\nu^{\frac{l(T)-1}{2}}]\rtimes \sigma_1\hookrightarrow \chi_V\nu^{\frac{l(T)-1}{2}}\times \chi_V\delta[\nu^{-\frac{l(T)-1}{2}},\nu^{\frac{l(T)-3}{2}}]\rtimes \sigma_1\cong\\
&\chi_V\nu^{\frac{l(T)-1}{2}}\times \chi_V\delta[\nu^{-\frac{l(T)-3}{2}},\nu^{\frac{l(T)-1}{2}}]\rtimes \sigma_1.
\end{align*}
Thus, using Lemma \ref{AG-epimorphism} and the fact that $\Theta_{l(T)}(\sigma_1)=\theta_{l(T)}(\sigma_1),$ we get
\[\chi_W\nu^{-\frac{l(T)-1}{2}}\times \chi_W\delta[\nu^{-\frac{l(T)-1}{2}},\nu^{\frac{l(T)-3}{2}}]\rtimes \theta_{l(T)}(\sigma_1)\twoheadrightarrow \Theta_{l(T)}(\sigma_2)\twoheadrightarrow \theta_{l(T)}(\sigma_2).\]
\noindent 2. We want to prove, using the epimorphism just above, that
 $\theta_{l(T)}(\sigma_2)$ appears with the multiplicity one in the composition series of $\Theta_{l(T)}(\sigma_2).$ We shall do that by examining the multiplicity with which a certain irreducible representation appears in the appropriate Jacquet modules. Namely, we prove that 
 \[\chi_W\nu^{\frac{l(T)-1}{2}}\times \chi_W\delta[\nu^{-\frac{l(T)-3}{2}},\nu^{\frac{l(T)-1}{2}}]\otimes \theta_{l(T)}(\sigma_1)\]
 appears $x$ times in the appropriate Jacquet module of $\theta_{l(T)}(\sigma_2)$
 and less then $2x$ times in the appropriate Jacquet module of $\Theta_{l(T)}(\sigma_2).$ In this case, we prove that $\chi_W\nu^{\frac{l(T)-1}{2}}\times \chi_W\delta[\nu^{-\frac{l(T)-3}{2}},\nu^{\frac{l(T)-1}{2}}]\otimes \theta_{l(T)}(\sigma_1)$ appears with the multiplicity $6$ in the Jacquet module of $\theta_{l(T)}(\sigma_2)$ and with multiplicity $7$ in the Jacquet module of  $\chi_W\nu^{\frac{l(T)-1}{2}}\times \chi_W\delta[\nu^{-\frac{l(T)-3}{2}},\nu^{\frac{l(T)-1}{2}}]\rtimes \theta_{l(T)}(\sigma_1)$, so the multiplicity with which it appears in the appropriate Jacquet
 module of $\Theta_{l(T)}(\sigma_2)$, by the above epimorphism,  is less or equal to $7.$ The calculation uses \eqref{eq_tadic_classical},\eqref{eq_tadic_classical2}. To conclude, since $\theta_{l(T)}(\sigma_2)=\chi_W\delta[\nu^{-\frac{l(T)-1}{2}},\nu^{\frac{l(T)-1}{2}}]\rtimes \theta_{l(T)}(\sigma_1),$ we have to enumerate all the possibilities for the following occurrences 
 \[\chi_W\nu^{\frac{l(T)-1}{2}}\times \chi_W\delta[\nu^{-\frac{l(T)-3}{2}},\nu^{\frac{l(T)-1}{2}}]\le \chi_W\delta[\nu^{-i},\nu^{\frac{l(T)-1}{2}}]\times \chi_W\delta[\nu^{j+1},\nu^{\frac{l(T)-1}{2}}]\times \lambda_1\]
 and
 \[\theta_{l(T)}(\sigma_1)\le\chi_W\delta[\nu^{i+1},\nu^{j}]\rtimes \lambda_2, \]
 for some $-\frac{l(T)+1}{2}\le i\le j\le \frac{l(T)-1}{2},$ with  irreducible subquotient $\lambda_1\otimes \lambda_2\le \mu^*(\theta_{l(T)}(\sigma_1)).$
 Using properties of the generic representations of general linear groups, together with the properties of the L-parameter of $\theta_{l(T)}(\sigma_1),$ we conclude that there are $6$ such possibilities. 

 In this calculation, it is important to calculate the first derivative with respect of $\chi_W\nu^{\frac{l(T)-1}{2}}$ of 
 $\theta_{l(T)}(\sigma_1)$-since $\theta_{l(T)}(\sigma_1)$ is square-integrable, this derivative is an irreducible tempered representation, say $\lambda_0$, for which we indeed have 
 \[\theta_{l(T)}(\sigma_1)\le \chi_W\nu^{\frac{l(T)-1}{2}}\rtimes \lambda_0.\] Also, we want to find all $\xi$  such that 
 \[\chi_W\delta[\nu^{-\frac{l(T)-3}{2}},\nu^{\frac{l(T)-1}{2}}]\otimes \xi\le \mu^*(\theta_{l(T)}(\sigma_1)).\]
 It turns out that there is only one such $\xi,$ it is a discrete series representation and we know (\cite{MT}) that
 \[\theta_{l(T)}(\sigma_1))\le \chi_W\delta[\nu^{-\frac{l(T)-3}{2}},\nu^{\frac{l(T)-1}{2}}]\rtimes \xi.\]
 We emphasize these calculations of the Jacquet modules of $\theta_{l(T)}(\sigma_1),$
 because these calculations become more difficult when we calculate them for $\theta_{l(T)}(\sigma_{i-1}),\;i\ge 3.$

 \noindent 3. Now we apply the filtration of $\Theta:=\Theta(\chi_V\delta[\nu^{-\frac{l(T)-1}{2}},\nu^{\frac{l(T)-1}{2}}]\otimes \sigma_1, R_P(\omega_{m,n}))$ calculated in \cite{Muic_Israel}, which we have used before, so that
 \[\{0\}\subset \Theta_0\subset \Theta.\] 
 Here $\omega_{m,n}$ in the appropriate Weil representation which has the relevant quotient $\sigma_2\otimes \Theta_{l(T)}(\sigma_2).$ Recall that $\Theta\twoheadrightarrow \Theta_{l(T)}(\sigma_2);$ let $f$ denote this epimorphism.

 If $f_{|\Theta_0}\neq 0$  we have a non-zero mapping  
 \[\chi_W\delta[\nu^{-\frac{l(T)-1}{2}},\nu^{\frac{l(T)-1}{2}}]\rtimes \theta_{l(T)}(\sigma_1)\to \Theta_{l(T)}(\sigma_2).\]
 Recall that $\chi_W\delta[\nu^{-\frac{l(T)-1}{2}},\nu^{\frac{l(T)-1}{2}}]\rtimes \theta_{l(T)}(\sigma_1)=\theta_{l(T)}(\sigma_2),$ thus we have an embedding of $\theta_{l(T)}(\sigma_2)$ in $\Theta_{l(T)}(\sigma_2).$ In the previous step, we proved that the multiplicity of $\theta_{l(T)}(\sigma_2)$ in $\Theta_{l(T)}(\sigma_2)$ equals one, thus $\theta_{l(T)}(\sigma_2)=\Theta_{l(T)}(\sigma_2).$

 If, on the other hand, $f_{|\Theta_0}= 0,$ we have an epimorphism
 \[\chi_W\delta[\nu^{-\frac{l(T)-1}{2}},\nu^{\frac{l(T)-3}{2}}]\rtimes \Theta_{l(T)-2}(\sigma_1)\twoheadrightarrow \Theta_{l(T)}(\sigma_2).\] 
 By Proposition \ref{temp_odd}, $\Theta_{l(T)-2}(\sigma_1)=\theta_{l(T)-2}(\sigma_1).$ All in all, since $\chi_W\delta[\nu^{-\frac{l(T)-1}{2}},\nu^{\frac{l(T)-1}{2}}]\rtimes \theta_{l(T)-2}(\sigma_0)\twoheadrightarrow \theta_{l(T)-2}(\sigma_1)$ we have an epimorphism
 \[\chi_W\delta[\nu^{-\frac{l(T)-1}{2}},\nu^{\frac{l(T)-3}{2}}]\times \chi_W\delta[\nu^{-\frac{l(T)-1}{2}},\nu^{\frac{l(T)-1}{2}}]\rtimes \theta_{l(T)-2}(\sigma_0)\twoheadrightarrow \Theta_{l(T)}(\sigma_2).\]
 Since 
 \[\chi_W\delta[\nu^{-\frac{l(T)-1}{2}},\nu^{\frac{l(T)-3}{2}}]\times \chi_W\delta[\nu^{-\frac{l(T)-1}{2}},\nu^{\frac{l(T)-1}{2}}]\cong \chi_W\delta[\nu^{-\frac{l(T)-1}{2}},\nu^{\frac{l(T)-1}{2}}]\times \chi_W\delta[\nu^{-\frac{l(T)-1}{2}},\nu^{\frac{l(T)-3}{2}}],\]
 the following holds
 \[\chi_W\delta[\nu^{-\frac{l(T)-1}{2}},\nu^{\frac{l(T)-1}{2}}]\times \chi_W\delta[\nu^{-\frac{l(T)-1}{2}},\nu^{\frac{l(T)-3}{2}}]\rtimes  \theta_{l(T)-2}(\sigma_0)\twoheadrightarrow \Theta_{l(T)}(\sigma_2).\]
 The representation $\chi_W\delta[\nu^{-\frac{l(T)-1}{2}},\nu^{\frac{l(T)-1}{2}}]\times L(\chi_W\delta[\nu^{-\frac{l(T)-3}{2}},\nu^{\frac{l(T)-1}{2}}]; \theta_{l(T)-2}(\sigma_0))$ is a subrepresentation of the left-hand side and does not have a tempered subquotient. Indeed, this representation is unitarizable ($L(\chi_W\delta[\nu^{-\frac{l(T)-3}{2}},\nu^{\frac{l(T)-1}{2}}]; \theta_{l(T)-2}(\sigma_0))$ is unitarizable) and, consequently, semi-simple, so each subquotient, say $\lambda,$ is a subrepresentation. It follows
 \begin{align*}
 &\lambda\hookrightarrow \chi_W\delta[\nu^{-\frac{l(T)-1}{2}},\nu^{\frac{l(T)-1}{2}}]\times L(\chi_W\delta[\nu^{-\frac{l(T)-3}{2}},\nu^{\frac{l(T)-1}{2}}]; \theta_{l(T)-2}(\sigma_0))\\ 
 & \hookrightarrow \chi_W\delta[\nu^{-\frac{l(T)-1}{2}},\nu^{\frac{l(T)-1}{2}}]\times  \chi_W\delta[\nu^{-\frac{l(T)-1}{2}},\nu^{\frac{l(T)-3}{2}}]\rtimes \theta_{l(T)-2}(\sigma_0)\\ 
 &\cong \chi_W\delta[\nu^{-\frac{l(T)-1}{2}},\nu^{\frac{l(T)-3}{2}}]\times \chi_W\delta[\nu^{-\frac{l(T)-1}{2}},\nu^{\frac{l(T)-1}{2}}]\rtimes \theta_{l(T)-2}(\sigma_0).
 \end{align*}
 This means that $\chi_W\delta[\nu^{-\frac{l(T)-1}{2}},\nu^{\frac{l(T)-1}{2}}]\times L(\chi_W\delta[\nu^{-\frac{l(T)-3}{2}},\nu^{\frac{l(T)-1}{2}}]; \theta_{l(T)-2}(\sigma_0))$ is in the kernel of the above epimorphism since $\Theta_{l(T)}(\sigma_2)$ does not have non-tempered subquotients; it follows that the cosocle, which is a semisimple unitarizable representation
 \[\chi_W\delta[\nu^{-\frac{l(T)-1}{2}},\nu^{\frac{l(T)-1}{2}}]\rtimes(\lambda_1\oplus \lambda_2),\] maps onto $\Theta_{l(T)}(\sigma_2),$ guaranteeing that it is irreducible. Here $\lambda_i,\;i=1,2,$ are two discrete series subrepresentations of $\chi_W\delta[\nu^{-\frac{l(T)-3}{2}},\nu^{\frac{l(T)-1}{2}}]\rtimes \theta_{l(T)-2}(\sigma_0).$

 In these three steps we have proved that $\Theta_{l(T)}(\sigma_2)$ is irreducible.
 \smallskip

 Now assume that $\Theta_{l(T)}(\sigma_{i-1})$ is irreducible; we also assume that 
 the multiplicity of $\theta_{l(T)}(\sigma_{i-1})$ in the composition series of 
 \[\chi_W\nu^{-\frac{l(T)-1}{2}}\times \chi_W\delta[\nu^{-\frac{l(T)-1}{2}},\nu^{\frac{l(T)-3}{2}}]\rtimes \theta_{l(T)}(\sigma_{i-2})\]
 equals one.  We prove that $\Theta_{l(T)}(\sigma_{i})$ is irreducible. We go through the previous steps.

 \noindent 1. The representation 
\[\chi_V\delta[\nu^{-\frac{l(T)-3}{2}},\nu^{\frac{l(T)-1}{2}}]\rtimes \sigma_{i-1}\]
is irreducible.  Indeed, since $\sigma_{i-1}=\chi_V\delta[\nu^{-\frac{l(T)-1}{2}},\nu^{\frac{l(T)-1}{2}}]^{i-1}\rtimes \sigma_1$ and $\chi_V\delta[\nu^{-\frac{l(T)-3}{2}},\nu^{\frac{l(T)-1}{2}}]\times \chi_V\delta[\nu^{-\frac{l(T)-1}{2}},\nu^{\frac{l(T)-1}{2}}]$ is irreducible, by the reductions explained in the Introduction of \cite{Muic_standard}, this follows from the fact that 
\[\chi_V\delta[\nu^{-\frac{l(T)-3}{2}},\nu^{\frac{l(T)-1}{2}}]\rtimes \sigma_{1}\]
is irreducible. Now we have
\[\chi_W\nu^{-\frac{l(T)-1}{2}}\times \chi_W\delta[\nu^{-\frac{l(T)-1}{2}},\nu^{\frac{l(T)-3}{2}}]\rtimes \theta_{l(T)}(\sigma_{i-1})\twoheadrightarrow \Theta_{l(T)}(\sigma_i)\twoheadrightarrow \theta_{l(T)}(\sigma_i).\]

\noindent 2. We calculate the multiplicity of $\chi_W\nu^{\frac{l(T)-1}{2}}\times \chi_W\delta[\nu^{-\frac{l(T)-3}{2}},\nu^{\frac{l(T)-1}{2}}]\otimes \theta_{l(T)}(\sigma_{i-1})$ in the appropriate Jacquet module of $\theta_{l(T)}(\sigma_i)=\chi_W\delta[\nu^{-\frac{l(T)-1}{2}},\nu^{\frac{l(T)-1}{2}}]\rtimes \theta_{l(T)}(\sigma_{i-1}).$
This calculation is a bit more complicated than in the case $i=2.$ We give an example: in the process, by using \ref{eq_tadic_classical}, we have to find all the irreducible subquotients 
\[\chi_W\otimes \xi\le \mu^*(\theta_{l(T)}(\sigma_{i-1}))\]
such that
\[\theta_{l(T)}(\sigma_{i-1})\le \chi_W\rtimes \xi.\]
Let us denote $\delta_u=\chi_W\delta[\nu^{-\frac{l(T)-1}{2}},\nu^{\frac{l(T)-1}{2}}].$ Then, the possibilities are $\xi\in\{\delta_u^{i-3}\rtimes \xi_1, \delta_u^{i-3} \rtimes \xi_1', L(\chi_W\delta[\nu^{-\frac{l(T)-3}{2}},\nu^{\frac{l(T)-1}{2}}];\delta_u^{i-3}\rtimes \theta_{l(T)}(\sigma_1))\}.$ Here $\xi_1$ is a tempered representation with the the same L-parameter (and signs) as $\theta_{l(T)}(\sigma_1)$ except the summands $\chi_W\otimes S_{l(T)-2}$ and  $\chi_W\otimes S_{l(T)}$ come with the multiplicity 2 (they have the signs as in  $\theta_{l(T)}(\sigma_1)$) and $\xi_1'$ has the same L-parameter as $\xi_1,$ the same signs except it has the opposite sign on $\chi_W\otimes S_{l(T)}.$  We conclude that, out of all the options for $\xi,$ only $\delta_u^{i-3}\rtimes \xi_1$ satisfies the condition that
$\theta_{l(T)}(\sigma_{i-1})\le \chi_W\rtimes \xi.$ Indeed, $L(\chi_W\delta[\nu^{-\frac{l(T)-3}{2}},\nu^{\frac{l(T)-1}{2}}];\delta_u^{i-3}\rtimes \theta_{l(T)}(\sigma_1))$ does not satisfy it because of our induction assumption that $\theta_{l(T)}(\sigma_{i-1})$ appears with multiplicity one in the composition series of 
\[\chi_W\nu^{-\frac{l(T)-1}{2}}\times \chi_W\delta[\nu^{-\frac{l(T)-1}{2}},\nu^{\frac{l(T)-3}{2}}]\rtimes \theta_{l(T)}(\sigma_{i-2}).\]
On the other hand, $\delta_u^{i-3} \rtimes \xi_1'$ does not satisfy it because the highest derivative of $\chi_W\times \delta_u^{i-3} \rtimes \xi_1'$ with respect to $\chi_W\nu^{\frac{l(T)-1}{2}}$ is of order $2i-4,$ and of $\theta_{l(T)}(\sigma_{i-1})$ is of order $2i-3.$ Analogously, we have to find those irreducible subquotients
\[\chi_W\delta[\nu^{-\frac{l(T)-3}{2}},\nu^{\frac{l(T)-1}{2}}]\otimes \xi'\le \mu^*(\theta_{l(T)}(\sigma_{i-1}))\]
such that
\[\theta_{l(T)}(\sigma_{i-1})\le \chi_W\delta[\nu^{-\frac{l(T)-3}{2}},\nu^{\frac{l(T)-1}{2}}]\rtimes \xi'.\] We argue analogously.
All in all, we obtain that the multiplicity with which $\chi_W\nu^{\frac{l(T)-1}{2}}\times \chi_W\delta[\nu^{-\frac{l(T)-3}{2}},\nu^{\frac{l(T)-1}{2}}]\otimes \theta_{l(T)}(\sigma_{i-1})$ appears in the appropriate Jacquet module of $\theta_{l(T)}(\sigma_i)$ equals $8i-10.$ In the same vein, we get that the multiplicity with which $\chi_W\nu^{\frac{l(T)-1}{2}}\times \chi_W\delta[\nu^{-\frac{l(T)-3}{2}},\nu^{\frac{l(T)-1}{2}}]\otimes \theta_{l(T)}(\sigma_{i-1})$ appears in the appropriate Jacquet module of $\chi_W\nu^{\frac{l(T)-1}{2}}\times \chi_W\delta[\nu^{-\frac{l(T)-3}{2}},\nu^{\frac{l(T)-1}{2}}]\rtimes \theta_{l(T)}(\sigma_{i-1})$ equals $10i-13.$ Thus, $\theta_{l(T)}(\sigma_i)$ appears with multiplicity one in the composition series of $\chi_W\nu^{\frac{l(T)-1}{2}}\times \chi_W\delta[\nu^{-\frac{l(T)-3}{2}},\nu^{\frac{l(T)-1}{2}}]\rtimes \theta_{l(T)}(\sigma_{i-1})$ and so it appears with the multiplicity one in $\Theta_{l(T)}(\sigma_i).$ The third step is virtually the same as in the previous case of $i=2;$ thus, we have proved that $\Theta_{l(T)}(\sigma_i)=\theta_{l(T)}(\sigma_i)$ and, to return to the beginning of our proof, for  $\sigma=\sigma_h$ we have proved that  $\Theta_{l(T)}(\sigma)$ is irreducible. Now, since the representations $\chi_V\delta_1,\ldots,\chi_V\delta_k$ from the description of $T$ are all different from $\chi_V\delta[\nu^{-\frac{l(T)-1}{2}},\nu^{\frac{l(T)-1}{2}}],$ we can apply Lemma \ref{AG-epimorphism} to obtain that $\Theta_{l(T)}(T)$ is irreducible.

\end{proof}
\bibliographystyle{siam}
\bibliography{big_theta}
\end{document}